\newcommand\org@maketitle{}
\newcommand\@authors{}
\let\org@maketitle\maketitle
\def\maketitle{%
	\let\@authors\authors
	\nxandlist{; }{ and }{; }\@authors
	\hypersetup{
		linktocpage=true,
		pdftitle={\@title},
                pdfauthor={\@authors},
                pdfsubject={\subjclassname. \@subjclass},
		pdfkeywords={\@keywords}
	}%
	\org@maketitle
}
\renewcommand{\PrintDOI}[1]{\doi{#1}}
\numberwithin{equation}{section}
\newtheorem{maintheorem}{Theorem}
\newtheorem{theorem}{Theorem}[section]
\newtheorem{lemma}[theorem]{Lemma}
\newtheorem{corollary}[theorem]{Corollary}
\newtheorem{proposition}[theorem]{Proposition}
\theoremstyle{definition}
\newtheorem{definition}[theorem]{Definition}
\newtheorem{example}[theorem]{Example}
\theoremstyle{remark}
\newtheorem{remark}[theorem]{Remark}
\newcommand{\al}{\alpha}
\newcommand{\be}{\beta}
\newcommand{\g}{\gamma}
\newcommand{\e}{\varepsilon}
\newcommand{\si}{\sigma}
\newcommand{\om}{\omega}
\newcommand{\La}{\Delta}
\newcommand{\olR} {\overline R}
\newcommand{\R}{\mathbb{R}}
\newcommand{\vp}{\varphi}
\newcommand{\iy}{{\infty}}
\newcommand{\D}{\nabla}
\newcommand{\ra}{\rightarrow}
\newcommand{\pa}{\partial}
\renewcommand{\div}{\operatorname{div}}
\newcommand{\loc}{\mathrm{loc}}
\newcommand{\dist}{\operatorname{dist}}
\newcommand{\mean}[1]{\langle #1\rangle}
\def\Xint#1{\mathchoice
  {\XXint\displaystyle\textstyle{#1}}%
  {\XXint\textstyle\scriptstyle{#1}}%
  {\XXint\scriptstyle\scriptscriptstyle{#1}}%
  {\XXint\scriptscriptstyle\scriptscriptstyle{#1}}%
  \!\int}
\def\XXint#1#2#3{{\setbox0=\hbox{$#1{#2#3}{\int}$}
    \vcenter{\hbox{$#2#3$}}\kern-.5\wd0}}
\def\dashint{\Xint-}
\mathchardef\ordinarycolon\mathcode`\:
\author{Seongmin Jeon}
\address{Department of Mathematics, Purdue University, West Lafayette,
  IN 47907, USA}
\email[S.J.]{jeon54@purdue.edu}
\author{Arshak Petrosyan}
\address{Department of Mathematics, Purdue University, West Lafayette,
  IN 47907, USA}
\email[A.P.]{arshak@purdue.edu}
\thanks{The second author is supported in part by NSF Grant DMS-1800527.}
\title[Almost minimizers for certain fractional variational
problems]{Almost minimizers for certain fractional variational problems}
\subjclass[2010]{Primary 49N60, 35R35}
\keywords{Almost minimizers, fractional Laplacian, fractional harmonic
  functions, fractional obstacle
  problem, regularity of solutions
}
\dedicatory{To Nina Nikolaevna Uraltseva on the occasion of
  her 85th birthday.}
\begin{document}

\begin{abstract} In this paper we introduce a notion of almost
  minimizers for certain variational problems governed by the
  fractional Laplacian, with the help of the Caffarelli-Silvestre
  extension. In particular, we study almost fractional harmonic
  functions and almost minimizers for the fractional obstacle problem
  with zero obstacle. We show that for a certain range of parameters,
  almost minimizers are almost Lipschitz or $C^{1,\beta}$-regular.
\end{abstract}

\maketitle


\section{Introduction and Main Results}


\subsection{Fractional harmonic functions}
Given $0<s<1$, we say that a function
$u\in \mathcal{L}_s(\R^n):=L^1(\R^n,(1+|x|^{n+2s})^{-1})$ is
\emph{$s$-fractional harmonic} in an open set $\Omega\subset\R^n$ if
\begin{equation}\label{eq:u-s-frac-pv}
  (-\Delta_x)^s
  u(x):=C_{n,s}\,\text{p.v.}\int_{\R^n}\frac{u(x)-u(x+z)}{|z|^{n+2s}}=0\quad\text{in
  }\Omega,
\end{equation}
where p.v.\ stands for Cauchy's principal value and $C_{n,s}$ is a
normalization constant.  The formula above is just one of many
equivalent definitions of the fractional Laplacian $(-\Delta_x)^s$,
another one being a pseudo-differential operator with Fourier symbol
$|\xi|^{2s}$. We refer to a recent review of Garofalo \cite{Gar19} for
basic properties of $(-\Delta_x)^s$, as well as many historical
remarks concerning that operator.

In recent years, there has been a surge of interest in nonlocal
problems involving the fractional Laplacian, when it was discovered
that the problems can be localized by the use of the so-called
Caffarelli-Silvestre extension procedure \cite{CafSil07}. Namely, for
$a=1-2s\in(-1,1)$, let
$$
P(x, y):=C_{n,
  a}\frac{|y|^{1-a}}{\left(|x|^2+|y|^2\right)^{\frac{n+1-a}2}},\quad
(x,y)\in\R^n\times\R_+=\R^{n+1}_+,
$$
(to be called the Poisson kernel for the extension operator $L_a$) and
consider the convolution, still denoted by $u$,
$$
u(x,y):=u*P(\cdot,y)=\int_{\R^n}
u(z)P(x-z,y)dz,\quad(x,y)\in\R^{n+1}_+.
$$
Note that $u(x,y)$ solves the Cauchy problem
\begin{align*}
  L_a u:=\div(|y|^a\nabla u)=0&\quad\text{in }\R^{n+1}_+,\\*
  u(x,0)=u(x)&\quad\text{on }\R^n,
\end{align*}
where $\nabla=\nabla_{x,y}$ is the full gradient in $x$ and $y$
variables. $L_a$ is known as the Caffarelli-Silvestre \emph{extension
  operator}. Then, one can recover $(-\Delta_x)^s u$ as the fractional
normal derivative on $\R^n$
$$
(-\Delta_x)^{s}u(x)=-C_{n,a}\lim_{y\to 0+}y^a\partial_y u(x,y),\quad
x\in\R^n
$$
to be understood in the appropriate sense of traces. Now, going back
to the definition \eqref{eq:u-s-frac-pv}, if we consider the even
reflection of $u$ in $y$-variable to all of $\R^{n+1}$, i.e.,
$$
u(x,y)=u(x,-y),\quad x\in\R^n,\ y<0,
$$
then the following fact holds: $u(x)$ is $s$-fractional harmonic in
$\Omega$ if and only if $u(x,y)$ satisfies
\begin{equation}\label{eq:La-harm-ext}
  L_a u=0\quad\text{in }\widetilde \Omega:=\R^{n+1}_-\cup
  (\Omega\times\{0\})\cup\R^{n+1}_+.
\end{equation}
(We will refer to solutions of $L_au=0$ as \emph{$L_a$-harmonic
  functions}.) This is essentially Lemma~4.1 in \cite{CafSil07}. Since
$L_a u=0$ in $\R^n_\pm$ by definition, the condition
\eqref{eq:La-harm-ext} is equivalent to asking
$$
L_a u=0\quad\text{in }B_r(x_0),
$$
for any ball $B_r(x_0)$ centered at $x_0\in \Omega$ such that
$B_r(x_0)\Subset \widetilde\Omega$, or equivalently
$B_r'(x_0)\Subset \Omega$. Now, observing that the solutions of the
above equation are minimizers of the weighted Dirichlet energy
$\int_{B_r(x_0)}|\nabla v|^2|y|^a$, we obtain the following fact.

\begin{proposition} A function $u\in \mathcal{L}_s(\R^n)$ is
  $s$-fractional harmonic in $\Omega$ if and only if its reflected
  Caffarelli-Silvestre extension $u(x,y)$ is in
  $W^{1,2}_{\loc}(\widetilde\Omega,|y|^a)$ and for any ball $B_r(x_0)$
  with $x_0\in\Omega$ such that $B'_r(x_0)\Subset\Omega$, we have
$$
\int_{B_r(x_0)}|\nabla u|^2|y|^a\leq \int_{B_r(x_0)}|\nabla v|^2|y|^a,
$$
for any $v\in u+W^{1,2}_0(B_r(x_0),|y|^a)$.
\end{proposition}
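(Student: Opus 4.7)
The plan is to chain together three equivalences that the surrounding text has already foreshadowed: (i) $u$ is $s$-fractional harmonic in $\Omega$ iff the reflected Caffarelli-Silvestre extension satisfies $L_a u = 0$ in $\widetilde\Omega$, which is Lemma~4.1 of \cite{CafSil07}; (ii) $L_a u = 0$ in $\widetilde\Omega$ is equivalent to $L_a u = 0$ in every ball $B_r(x_0)$ with $x_0\in \Omega$ and $B'_r(x_0)\Subset \Omega$, since $L_a u = 0$ already holds on $\R^{n+1}_\pm$ and any point of $\widetilde\Omega$ is contained in such a ball; and (iii) on each such ball, $L_a u = 0$ in the weak sense is equivalent to $u$ being the minimizer of the weighted Dirichlet energy against its own boundary data.

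First I would verify the weighted Sobolev regularity $u \in W^{1,2}_{\loc}(\widetilde\Omega,|y|^a)$. Away from $\R^n\times\{0\}$, $u(x,y)=u*P(\cdot,y)$ is smooth by differentiation under the integral (using $u\in\mathcal{L}_s(\R^n)$ and the decay of $P$), so the weighted integrability is immediate there. Across $\Omega\times\{0\}$, interior weighted $L^2$ gradient bounds for $L_a$-harmonic functions (again \cite{CafSil07}) supply what is needed; the even reflection is the right one precisely because $(-\Delta_x)^su=0$ on $\Omega$ annihilates the would-be jump in $|y|^a\partial_y u$ and thus allows the weak equation $L_a u=0$ to pass across the reflection plane on $\Omega\times\{0\}$.

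Having the Sobolev regularity in hand, step (iii) is the standard calculus-of-variations argument: the Euler-Lagrange equation of the convex functional $J(v):=\int_{B_r(x_0)}|\nabla v|^2|y|^a$ is
\[
  \int_{B_r(x_0)} |y|^a\,\nabla u\cdot\nabla \varphi\,dxdy = 0,\qquad \varphi\in W^{1,2}_0(B_r(x_0),|y|^a),
\]
which is exactly the weak formulation of $L_a u = 0$ in $B_r(x_0)$; strict convexity of $J$ then gives the equivalence between satisfying this weak equation and minimizing $J$ over $u+W^{1,2}_0(B_r(x_0),|y|^a)$. The main obstacle is really the regularity step — justifying that the reflected extension lies in $W^{1,2}_{\loc}(\widetilde\Omega,|y|^a)$ in spite of the degeneracy of the weight at $y=0$ and the merely integrable hypothesis $u\in\mathcal{L}_s(\R^n)$; once this is established, the remaining equivalences are essentially bookkeeping built on the Caffarelli-Silvestre identification $(-\Delta_x)^s u = -C_{n,a}\lim_{y\to 0+}y^a\partial_y u$.
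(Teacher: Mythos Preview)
Your proposal is correct and follows precisely the approach the paper intends: the proposition is not given a separate formal proof in the paper but is presented as a direct consequence of the preceding paragraph, which outlines exactly your chain (i)--(iii) (citing \cite{CafSil07}*{Lemma~4.1}, then reducing to balls centered on the thin space, then invoking the variational characterization of $L_a$-harmonic functions). Your added discussion of the $W^{1,2}_{\loc}(\widetilde\Omega,|y|^a)$ regularity is a useful elaboration of a point the paper leaves implicit.
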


We take this proposition as the starting point for the definition of
almost $s$-fractional harmonic functions, in the spirit of Anzellotti
\cite{Anz83}.

\begin{definition}[Almost $s$-fractional harmonic functions] Let
  $r_0>0$ and $\omega:(0,r_0)\to [0,\infty)$ be a modulus of
  continuity\footnote{i.e., a nondecreasing function with
    $\omega(0+)=0$}. We say that a function $u\in\mathcal{L}_s(\R^n)$
  is \emph{almost $s$-fractional harmonic} in an open set
  $\Omega\subset \R^n$, with a gauge function $\omega$, if its
  reflected Caffarelli-Silvestre extension $u(x,y)$ is in
  $W^{1,2}_{\loc}(\widetilde\Omega,|y|^a)$ and for any ball $B_r(x_0)$
  with $x_0\in\Omega$ and $0<r<r_0$ such that
  $B'_r(x_0)\Subset\Omega$, we have
  \begin{equation}
    \int_{B_r(x_0)} |\nabla u|^2|y|^a\leq
    (1+\omega(r))\int_{B_r(x_0)} |\nabla v|^2 |y|^a,
  \end{equation}
  for any $v\in u+W^{1,2}_0(B_r(x_0),|y|^a)$.
\end{definition}


\subsection{Fractional obstacle problem} A function
$u\in\mathcal{L}_s(\R^n)$ is said to solve the $s$-fractional obstacle
problem with obstacle $\psi$ in an open set $\Omega\subset\R^n$, if
\begin{equation}\label{eq:frac-obs-prob}
  \min\{(-\Delta_x)^s u, u-\psi\}=0\quad\text{in }\Omega.
\end{equation}
We refer to \cites{Sil07,CafSalSil08} for general introduction and
basic results on this problem. With the help of the reflected
Caffarelli-Silvestre extension, we can rewrite the problem as a
Signorini-type problem for the operator $L_a$:
\begin{align*}
  L_a u=0&\quad\text{in }\R^{n+1}_\pm\\
  \min\{-\partial^a_y u, u-\psi\}=0&\quad\text{in }\Omega, 
\end{align*}
where
$$
\partial_y^a u(x,0):=\lim_{y\to 0+}y^a\partial_y u(x,y).
$$
This, in turn, can be written in the following variational form, see
\cite{CafSalSil08}.
\begin{proposition} A function $u\in\mathcal{L}_s(\R^n)$ solves
  \eqref{eq:frac-obs-prob} if and only if its reflected
  Caffarelli-Silvestre extension $u(x,y)$ is in
  $W^{1,2}_\loc(\widetilde\Omega)$ and for any ball $B_r(x_0)$ with
  $x_0\in\Omega$ such that $B'_r(x_0)\Subset\Omega$, we have
$$
\int_{B_r(x_0)}|\nabla u|^2|y|^a\leq \int_{B_r(x_0)}|\nabla v|^2|y|^a,
$$
for any
$v\in\mathfrak{K}_{\psi,u}(B_r(x_0),|y|^a):=\{v\in
u+W^{1,2}_0(B_r,|y|^a): v\geq \psi\ \text{on}\ B_r'(x_0)\}$.
\end{proposition}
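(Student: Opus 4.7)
The plan is to mirror the Euler--Lagrange argument behind the preceding proposition, with the weighted Dirichlet energy now minimized over the convex set $\mathfrak{K}_{\psi,u}(B_r(x_0),|y|^a)$ instead of the affine space $u+W^{1,2}_0(B_r(x_0),|y|^a)$. The two directions then correspond, respectively, to testing the pointwise Signorini conditions against admissible perturbations, and to reading off those conditions from variations within $\mathfrak{K}_{\psi,u}$.

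For the forward direction, assume $u$ solves \eqref{eq:frac-obs-prob}. The reflected Caffarelli--Silvestre extension automatically satisfies $L_a u=0$ in $\R^{n+1}_\pm$, and (as the authors note just before the proposition) the complementarity $\min\{(-\Delta_x)^s u,\,u-\psi\}=0$ on $\Omega$ is equivalent to the Signorini triple
\[
u \geq \psi,\qquad -\partial_y^a u \geq 0,\qquad (u-\psi)\,\partial_y^a u = 0\qquad\text{on }\Omega\times\{0\}.
\]
Given any $v\in\mathfrak{K}_{\psi,u}(B_r(x_0),|y|^a)$, set $w:=v-u\in W^{1,2}_0(B_r(x_0),|y|^a)$ with trace $w\geq \psi-u$ on $B_r'(x_0)$. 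Expand
\[
\int_{B_r(x_0)}|\nabla v|^2|y|^a-\int_{B_r(x_0)}|\nabla u|^2|y|^a = 2\int_{B_r(x_0)}\nabla u\cdot\nabla w\,|y|^a+\int_{B_r(x_0)}|\nabla w|^2|y|^a,
\]
and integrate by parts on the upper and lower half-balls separately. Because $L_a u=0$ off the thin set and $w$ vanishes on $\partial B_r(x_0)$, the only surviving boundary contribution comes from $\{y=0\}$, giving
\[
\int_{B_r(x_0)}\nabla u\cdot\nabla w\,|y|^a = \int_{B_r'(x_0)}(-\partial_y^a u)\,w\,dx.
\]
Splitting $w=(v-\psi)-(u-\psi)$, the three Signorini conditions force the right-hand side to be nonnegative, so the energy inequality follows.

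For the reverse direction, assume $u$ minimizes the weighted energy over $\mathfrak{K}_{\psi,u}(B_r(x_0),|y|^a)$ for every admissible ball. Perturbations $v=u+\varepsilon\varphi$ with $\varphi\in C_c^\infty(B_r(x_0)\setminus B_r'(x_0))$ are admissible for both signs of $\varepsilon$ and yield $L_a u=0$ in $\R^{n+1}_\pm$. Next, taking $\varphi\in C_c^\infty(B_r(x_0))$ with $\varphi\geq 0$ on $B_r'(x_0)$ makes $v=u+\varepsilon\varphi$ admissible for small $\varepsilon>0$, and the first-order inequality combined with the same integration-by-parts identity gives $-\partial_y^a u\geq 0$ on $B_r'(x_0)$. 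Finally, on the open set $\{u>\psi\}$ both signs of $\varepsilon$ are allowed, producing $\partial_y^a u=0$ there, hence $(u-\psi)\,\partial_y^a u=0$. Reassembling via the Caffarelli--Silvestre identity $(-\Delta_x)^s u=-C_{n,a}\,\partial_y^a u$ recovers \eqref{eq:frac-obs-prob}.

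The only real technical obstacle is making the integration-by-parts and the trace interpretation of $\partial_y^a u$ rigorous on the Muckenhoupt $A_2$-weighted space $W^{1,2}(B_r(x_0),|y|^a)$ with $a=1-2s\in(-1,1)$; the density of smooth functions and the corresponding Green's formula for $L_a$ with conormal derivative $\partial_y^a$ are standard in the Caffarelli--Silvestre--Salsa framework (and are exactly the content of \cite{CafSalSil08} that the authors cite). With those tools in hand, the proof reduces to the bookkeeping above.
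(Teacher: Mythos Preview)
The paper does not actually prove this proposition; it simply refers the reader to \cite{CafSalSil08} for the variational characterization. Your sketch is the standard Euler--Lagrange argument underlying that reference and is essentially correct. The only minor slip is in the boundary identity: integrating by parts over both half-balls (using the even symmetry of $u$ in $y$) produces a factor of $2$ in front of $\int_{B_r'(x_0)}(-\partial_y^a u)\,w$, as in the computation in Example~\ref{example-sig}; this of course does not affect the sign argument. You might also remark explicitly in the reverse direction that the minimality hypothesis forces $u\in\mathfrak{K}_{\psi,u}$, hence $u\geq\psi$ on $B_r'(x_0)$, which is the remaining condition needed to reconstitute \eqref{eq:frac-obs-prob}.
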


\begin{definition}[Almost minimizers for $s$-fractional obstacle
  problem]
  \label{def:alomost-min-frac-obst}
  Let $r_0>0$ and $\omega:(0,r_0)\to [0,\infty)$ be a modulus of
  continuity. We say that a function $u\in \mathcal{L}_s(\R^n)$ is an
  \emph{almost minimizer for the $s$-fractional obstacle problem} in
  an open set $\Omega\subset \R^n$, with a gauge function $\omega$, if
  its reflected Caffarelli-Silvestre extension $u(x,y)$ is in
  $W^{1,2}_{\loc}(\widetilde\Omega,|y|^a)$ and for any ball $B_r(x_0)$
  with $x_0\in\Omega$ and $0<r<r_0$ such that
  $B'_r(x_0)\Subset\Omega$, we have
  \begin{equation}\label{eq:almost-min-frac-obst}
    \int_{B_r(x_0)}|\nabla u|^2|y|^a \leq
    (1+\omega(r))\int_{B_r(x_0)} |\nabla v|^2|y|^a,
  \end{equation}
  for any $v\in\mathfrak{K}_{\psi,u}(B_r(x_0),|y|^a)$.
\end{definition}

The notion of almost minimizers above is related to the one for the
thin obstacle problem ($s=1/2$) studied by the authors in
\cite{JeoPet19a}, but there are certain important differences. In
Definition~\ref{def:alomost-min-frac-obst}, we ask the almost
minimizing property \eqref{eq:almost-min-frac-obst} to hold only for
balls centered on the ``thin space'' $\R^n$, while in
\cite{JeoPet19a}, we ask that property for balls centered at any point
in an open set in the ``thick space'' $\R^{n+1}$. In a sense, this
means that here we think of the perturbation from minimizers as living
on the thin space, while in \cite{JeoPet19a} they live in the thick
space.


\subsection{Main results and structure of the paper}

In this paper, our main concern is the regularity of almost minimizers
in their original variables.

We start with examples of almost minimizers in
Section~\ref{sec:exampl-almost-minim}. We then proceed to prove the
following results, echoing those in \cite{Anz83} and \cite{JeoPet19a}.

\begin{maintheorem}
  \label{mthm:I}
  Let $u\in\mathcal{L}_s(\R^n)$ be almost $s$-fractional harmonic in
  $\Omega$. Then
  \begin{enumerate}
  \item $u$ is almost Lipschitz in $\Omega$, i.e,
    $u\in C^{0,\sigma}(\Omega)$ for any $0<\sigma<1$.
  \item If $\omega(r)=r^\alpha$, then $u\in C^{1,\beta}(\Omega)$ for
    some $\beta=\beta_{n,a,\alpha}>0$.
  \item If $0<s<1/2$ or $s=1/2$ and $\omega(r)=r^\alpha$ for some
    $\alpha>0$, then $u$ is actually $s$-fractional harmonic in
    $\Omega$.
  \end{enumerate}
\end{maintheorem}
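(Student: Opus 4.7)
The plan is to follow an Anzellotti-style comparison strategy, reducing the regularity of $u$ to that of genuine $L_a$-harmonic competitors via the almost-minimality inequality, in the spirit of \cite{Anz83} and the authors' earlier work on the thin obstacle problem \cite{JeoPet19a}.

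For part~(1), fix $x_0\in\Omega$ and a ball $B_r(x_0)$ with $B'_r(x_0)\Subset\Omega$ and $r<r_0$, and introduce the $L_a$-harmonic replacement $h$ of $u$ on $B_r(x_0)$: the unique function with $L_ah=0$ in $B_r(x_0)$ and $h-u\in W^{1,2}_0(B_r(x_0),|y|^a)$. Since $h$ is an admissible competitor, combining the almost-minimality inequality with the orthogonality $\int_{B_r}\nabla h\cdot\nabla(u-h)\,|y|^a=0$ yields the comparison
\[
\int_{B_r(x_0)}|\nabla(u-h)|^2|y|^a\leq\omega(r)\int_{B_r(x_0)}|\nabla u|^2|y|^a.
\]
Classical interior estimates for $L_a$-harmonic functions (via weighted elliptic theory for the $A_2$ weight $|y|^a$) then give, for $\rho\leq r/2$,
\[
\int_{B_\rho(x_0)}|\nabla h|^2|y|^a\leq C(\rho/r)^{n+a+1}\int_{B_r(x_0)}|\nabla h|^2|y|^a.
\]
Setting $\Phi(\rho):=\int_{B_\rho(x_0)}|\nabla u|^2|y|^a$, these combine to $\Phi(\rho)\leq C[(\rho/r)^{n+a+1}+\omega(r)]\Phi(r)$, and the standard iteration lemma (valid since $\omega(r)\to 0$) delivers $\Phi(\rho)\leq C\rho^{n+a-1+2\sigma}$ for any $\sigma<1$. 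A weighted Poincaré inequality combined with a Campanato-type characterization then gives $u\in C^{0,\sigma}(\Omega)$.

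For part~(2), I would refine the comparison by subtracting weighted averages of the gradient. Using the $C^{1,\gamma}$ regularity of $L_a$-harmonic functions (starting from \cite{CafSil07} and subsequent sharper variants), one obtains a Campanato-type decay
\[
\int_{B_\rho}|\nabla h-(\nabla h)_\rho|^2|y|^a\leq C(\rho/r)^{n+a+1+2\gamma}\int_{B_r}|\nabla h-(\nabla h)_r|^2|y|^a,
\]
with due care that the ``right'' quantity to average near the thin space may involve the conjugate variable $|y|^a\partial_y h$ rather than $\partial_y h$ itself. Coupling this with the power gauge $\omega(r)=r^\alpha$ and iterating as in (1) produces the analogous decay for $u$, and the weighted Campanato characterization gives $u\in C^{1,\beta}$ with $\beta=\beta_{n,a,\alpha}>0$.

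For part~(3), once the higher regularity is in hand, I would argue that the almost-minimality defect must vanish. Testing with $v=u\pm t\phi$ for $\phi\in C_c^\infty(B_r(x_0))$, expanding, and optimizing in $t$, yields the weak Euler-Lagrange bound
\[
\left|\int\nabla u\cdot\nabla\phi\,|y|^a\right|\leq C\sqrt{\omega(r)}\,\Bigl(\textstyle\int_{B_r}|\nabla u|^2|y|^a\Bigr)^{1/2}\Bigl(\textstyle\int_{B_r}|\nabla\phi|^2|y|^a\Bigr)^{1/2}.
\]
A blow-up/rescaling argument exploiting the energy decay from~(1) and the restriction on the gauge ($0<s<1/2$ automatic since $|y|^a$ vanishes on the thin space, or $s=1/2$ with $\omega(r)=r^\alpha$ supplying the needed summability) forces the right-hand side to zero for all admissible $\phi$, yielding $L_au=0$ across $\Omega\times\{0\}$; hence $u$ is genuinely $s$-fractional harmonic. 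The main obstacle, I expect, lies in part~(2): establishing a sharp Campanato-type decay of $\nabla h$ across the singular/degenerate weight $|y|^a$, given that the raw derivative $\partial_y h$ need not be continuous across $\{y=0\}$ (only the conjugate $|y|^a\partial_y h$ is); part~(3) further requires careful bookkeeping to balance the gauge rate against the growth of admissible test-function energies concentrating near the thin space.
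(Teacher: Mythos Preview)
Your strategy for part~(1) matches the paper's exactly: $L_a$-harmonic replacement, orthogonality, the scale-invariant decay $\int_{B_\rho}|\nabla v|^2|y|^a\le(\rho/R)^{n+1+a}\int_{B_R}|\nabla v|^2|y|^a$, iteration via the standard lemma, and a weighted Poincar\'e/Campanato embedding.

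For part~(2) there is a meaningful simplification you are missing. Since the conclusion $u\in C^{1,\beta}(\Omega)$ refers only to regularity in the thin variables $x\in\Omega\subset\R^n$, the paper never attempts a Campanato decay for the full gradient: it works exclusively with the \emph{tangential} part $\nabla_x v$, proving
\[
\int_{B_\rho}|\nabla_x v-\mean{\nabla_x v}_\rho|^2|y|^a\le\Bigl(\frac{\rho}{R}\Bigr)^{n+a+3}\int_{B_R}|\nabla_x v-\mean{\nabla_x v}_R|^2|y|^a,
\]
and then transferring this to $\nabla_x u$. This completely sidesteps the issue you flag as the ``main obstacle'' concerning $|y|^a\partial_y h$ versus $\partial_y h$; that obstacle is simply not there. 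The sharp exponent $n+a+3$ (and the analogous decay in part~(1)) is obtained not from generic weighted elliptic estimates but from an orthogonal expansion of $v$ into homogeneous $L_a$-harmonic polynomials $v=\sum_k p_k$, developed in an appendix; this is the key technical device throughout Section~3.

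For part~(3) your approach diverges from the paper's and, as written, has a gap. Your Euler--Lagrange inequality is correct, but it only yields that $\int\nabla u\cdot\nabla\phi\,|y|^a$ is \emph{small} (of order $\sqrt{\omega(r)}$ times energy factors) for $\phi$ supported in $B_r$---not that it vanishes. The promised ``blow-up/rescaling'' that would force the defect to zero is not spelled out, and it is not clear how to pass from the scale-dependent bound to the pointwise conclusion $\partial_y^a u=0$ without already knowing some regularity of $\partial_y^a u$. The paper instead argues directly on the normal derivative: the polynomial expansion shows that for even $L_a$-harmonic $v$ one has the \emph{improved} decay
\[
\int_{B_\rho}|v_y|^2|y|^a\le\Bigl(\frac{\rho}{R}\Bigr)^{n+3+a}\int_{B_R}|v_y|^2|y|^a,
\]
with exponent $n+3+a$ rather than $n+1+a$ (the gain of $2$ comes because the degree-one polynomial $p_1$ is independent of $y$ by even symmetry, so $\partial_y p_1=0$). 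Transferring this to $u$ and combining with the energy decay from~(1) yields $\int_{B_\rho}||y|^a u_y|^2|y|^{-a}\le C\rho^{n+1-a+\delta}$ with $\delta>0$ precisely when $a>0$, or when $a=0$ and $\omega(r)=r^\alpha$; the Morrey--Campanato embedding (with weight $|y|^{-a}$) then gives that the trace of $y|y|^{a-1}u_y$ on the thin space is identically zero, and a mollification argument upgrades this to $(-\Delta_x)^s u=0$. The parenthetical ``$|y|^a$ vanishes on the thin space'' is not the operative mechanism; the two extra powers of $\rho$ from even symmetry are.
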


In the case of the $s$-fractional obstacle problem, our results are
obtained under the assumption that $1/2\leq s<1$ and $\psi=0$.

\begin{maintheorem}
  \label{mthm:II}
  Let $u\in\mathcal{L}_s(\R^n)$ be an almost minimizer for the
  $s$-fractional obstacle problem with obstacle $\psi=0$ in $\Omega$.
  \begin{enumerate}
  \item If $1/2\leq s<1$, then $u\in C^{0,\sigma}(\Omega)$ for any
    $0<\sigma<1$.
  \item If $1/2\leq s<1$ and $\omega(r)=r^\alpha$ for some $\alpha>0$,
    then $u\in C^{1,\beta}(\Omega)$ for some
    $\beta=\beta_{n,a,\alpha}>0$.
  \end{enumerate}
\end{maintheorem}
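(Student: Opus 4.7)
The plan is to adapt the Campanato–Morrey comparison scheme from Main Theorem~I, replacing the $L_a$-harmonic competitor on each ball with the minimizer of the thin obstacle problem having the same boundary data. Fix $x_0\in\Omega$ and a ball with $B_r'(x_0)\Subset\Omega$, and let $w$ be the unique minimizer of $\int_{B_r(x_0)}|\nabla v|^2|y|^a$ over the class $\mathfrak{K}_{0,u}(B_r(x_0),|y|^a)$; equivalently $w$ solves the weighted thin obstacle problem in $B_r(x_0)$ with boundary datum $u|_{\partial B_r(x_0)}$ and zero obstacle on $B_r'(x_0)$. After a preliminary verification that $u\ge 0$ on $B_r'(x_0)$ (so that $u$ itself lies in $\mathfrak{K}_{0,u}$), the almost-minimality of $u$ combined with the variational inequality for $w$ tested against $u-w$ yields the perturbation estimate
$$
\int_{B_r(x_0)}|\nabla(u-w)|^2|y|^a\;\le\;\omega(r)\int_{B_r(x_0)}|\nabla u|^2|y|^a.
$$

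The second ingredient is the Campanato decay for $w$. Away from its coincidence set $w$ is $L_a$-harmonic and locally Lipschitz in $x$, while across the coincidence set the Caffarelli–Salsa–Silvestre theory provides $C^{1,s}$ regularity on the thin space. For the $A_2$-Muckenhoupt measure $|y|^a\,dx\,dy$, these give the basic decay
$$
\int_{B_\rho}|\nabla w|^2|y|^a\le C(\rho/r)^{n+1+a}\int_{B_r}|\nabla w|^2|y|^a,
$$
together with a higher-order analogue for $\nabla w-(\nabla w)_{B_\rho}$ carrying an improved exponent $n+1+a+2\gamma$ for some $\gamma=\gamma_{n,a}>0$, both uniformly up to the free boundary. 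Inserting these into the comparison estimate and using $\int|\nabla w|^2|y|^a\le\int|\nabla u|^2|y|^a$ produces, with $\Phi(r):=\int_{B_r(x_0)}|\nabla u|^2|y|^a$, the recursion
$$
\Phi(\rho)\;\le\;C\bigl((\rho/r)^{n+1+a}+\omega(r)\bigr)\Phi(r).
$$
A standard iteration lemma in the spirit of \cite{Anz83} then yields $\Phi(r)\le C_\sigma r^{n-1+a+2\sigma}$ for every $0<\sigma<1$, and the weighted Morrey embedding associated to $|y|^a$ converts this into $u\in C^{0,\sigma}(\Omega)$, proving (1). For (2), under $\omega(r)=r^\alpha$ the same scheme applied to the higher-order quantity $\int_{B_r}|\nabla u-(\nabla u)_{B_r}|^2|y|^a$ closes at rate $n+1+a+2\beta$ for a suitable $\beta=\beta_{n,a,\alpha}>0$, giving $u\in C^{1,\beta}(\Omega)$ via the Campanato characterization.

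I expect the main difficulty to be the higher-order Campanato decay for $w$ uniformly up to the free boundary: while interior $L_a$-harmonic estimates handle the noncoincidence region, the effective rate relevant for (2) is dictated by the thin obstacle regularity of $w$ at its degenerate free boundary points, and this is where the hypothesis $1/2\le s<1$ becomes essential. A subsidiary but nontrivial point is the initial reduction to $u\ge 0$ on the thin space, which likewise depends on $s\ge 1/2$: a naive truncation to $u_+$ is not immediately admissible, so one must carefully track how the weighted energy changes under such a replacement, and the resulting estimate closes only in the stated range of $s$.
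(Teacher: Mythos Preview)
Your approach to part~(1) is correct and coincides with the paper's: compare $u$ with the Signorini minimizer $v$ on each ball, combine the variational inequality (tested against $u$) with the almost-minimality to get $\int|\nabla(u-v)|^2|y|^a\le\omega(R)\int|\nabla v|^2|y|^a$, invoke the growth estimate $\int_{B_\rho}|\nabla v|^2|y|^a\le(\rho/R)^{n+1+a}\int_{B_R}|\nabla v|^2|y|^a$, iterate via Lemma~\ref{lem:HL}, and finish with the weighted Campanato embedding.

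For part~(2), however, the scheme you sketch does not close. The ``higher-order analogue'' you invoke---a clean relative decay
\[
\int_{B_\rho}|\nabla w-\mean{\nabla w}_\rho|^2|y|^a\le C\Bigl(\frac{\rho}{r}\Bigr)^{n+1+a+2\gamma}\int_{B_r}|\nabla w-\mean{\nabla w}_r|^2|y|^a
\]
with $C,\gamma$ depending only on $n,a$, uniform up to the free boundary---is not available for Signorini solutions. The Caffarelli--Salsa--Silvestre theory yields only an \emph{absolute} bound $[\nabla_x w]_{C^{0,s}}\le C\|w\|_{L^\infty}/R^{1+s}$, not a scale-invariant relative one; translating it into Campanato form forces an additive error involving $\|w\|_{L^\infty}^2$. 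The paper's estimate for the \emph{tangential} gradient (Corollary~\ref{sig-grad-v-est3}) accordingly reads
\[
\int_{B_\rho}|\nabla_x v-\mean{\nabla_x v}_\rho|^2|y|^a\le C\Bigl(\frac{\rho}{S}\Bigr)^{n+a+3}\int_{B_S}|\nabla_x v-\mean{\nabla_x v}_S|^2|y|^a+C\,\mean{v^2}_R\,\frac{S^{n+2}}{R^{2+2s}},
\]
and the last term cannot be dropped. To absorb it, the paper compares $u$ with the Signorini minimizer on a \emph{larger} ball $B_{\olR}$ with $\olR=R^{1-\e}$ and then runs a dichotomy on the size of $\mean{v^2}_{\olR}$: if it is small relative to $\olR^{2\gamma}$ the error is harmless; if it is large, the H\"older bound already obtained in part~(1) forces $v>0$ on $B'_{\olR/2}$, so $v$ is genuinely $L_a$-harmonic there and the error-free decay of Lemma~\ref{anz-grad-v-holder} applies. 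This two-scale dichotomy is the heart of the argument and is absent from your sketch. Note also that the paper works only with $\nabla_x u$, not the full gradient; the normal component $|y|^a u_y$ lives in the $|y|^{-a}$-weighted space and does not enter the same iteration.

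On your final paragraph: the paper does not establish $u\ge0$ on the thin space---it simply tests the variational inequality for $v$ against $u$, tacitly assuming $u\in\mathfrak{K}_{0,u}$. Whatever the status of that point, it is not where the restriction $s\ge1/2$ enters. That restriction comes solely from the growth estimate~\eqref{sig-v-mon}: the sub-mean-value argument for $(|y|^a v_y)^2$ yields the exponent $n+1-a$, which is at least $n+1+a$ only when $a\le0$.
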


The proofs follow the general approach in \cite{Anz83} and
\cite{JeoPet19a} by first obtaining growth estimates for minimizers
(see Section~\ref{sec:growth-estim-minim}) and then deriving their
perturbed versions for almost minimizers
(Section~\ref{sec:almost-s-fractional} for $s$-fractional harmonic
functions and Section~\ref{sec:alm-min-s-frac-obst} for the
$s$-fractional obstacle problem). The regularity then follows by an
embedding theorem of a Morrey-Campanato-type space into the H\"older
space, which we included in
Appendix~\ref{sec:morr-camp-space}. Finally,
Appendix~\ref{sec:La-poly-expansion} contains the proof of orthogonal
polynomial expansion of $L_a$-harmonic functions, that we rely on in
deriving the growth estimates in
Section~\ref{sec:growth-estim-minim}. The polynomial expansion has
other interesting corollaries such as the (known) real-analyticity of
$s$-fractional harmonic functions, which are of independent interest.


\subsection{Notation}
Throughout the paper we use the following notation. $\R^n$ is the
$n$-dimensional Euclidean space. The points of $\R^{n+1}$ are denoted
by $X=(x, y)$, where $x=(x_1,\ldots,x_n)\in \R^{n}$, $y\in\R$. We
routinely identify $x\in \R^{n}$ with $(x, 0)\in \R^{n}\times
\{0\}$. $\R^{n+1}_\pm$ stands for open halfspaces
$\{X=(x,y)\in\R^{n+1}: \pm y>0\}$.

We use the following notations for balls of radius $r$ in $\R^n$ and
$\R^{n+1}$
\begin{alignat*}{2}
  B_r(X)&=\{Z\in \R^{n+1}:|X-Z|<r\},&\quad&\text{(Euclidean) ball in $\R^{n+1}$},\\
  B^{\pm}_r(x)&=B_r(x,0)\cap \{\pm y>0\},&& \text{half-ball in $\R^{n+1}$},\\
  B'_r(x)&=B_r(x,0)\cap \{ y=0\}, &&\text{ball in $\R^{n}$}.
\end{alignat*}
We typically drop the center from the notation if it is the
origin. Thus, $B_r=B_r(0)$, $B'_r=B'_r(0)$, etc.

Next,
$\nabla
u=\nabla_{X}u=(\partial_{x_1}u,\ldots,\partial_{x_n}u,\partial_yu)$
stands for the full gradient, while
$\nabla_x u=(\partial_{x_1}u,\ldots,\partial_{x_n}u)$. We also use the
standard notations for partial derivatives, such as $\partial_{x_i}u$,
$u_{x_i}$, $u_y$ etc.

In integrals, we often drop the variable and the measure of
integration if it is with respect to the Lebesgue measure or the
surface measure. Thus,
$$
\int_{B_r} u|y|^a=\int_{B_r} u(X)|y|^a dX,\quad \int_{\partial B_r}
u|y|^a=\int_{\partial B_r} u(X)|y|^a dS_X,
$$
where $S_X$ stands for the surface measure.

By $L^2(B_R,|y|^a)$ and $L^2(\partial B_R,|y|^a)$ we indicate the
weighted Lebesgue spaces of functions with the norms
\begin{align*}
  \|u\|_{L^2(B_R,|y|^a)}^2&=\int_{B_R}u^2 |y|^a\\
  \|u\|_{L^2(\partial B_R,|y|^a)}^2&=\int_{\partial B_R}u^2 |y|^a.
\end{align*}
$W^{1,2}(B_R,|y|^a)$ is the corresponding weighted Sobolev space of
functions with the norm
$$
\|u\|_{W^{1,2}(B_R,|y|^a)}^2=\|u\|_{L^2(B_R,|y|^a)}^2+\|\D
u\|_{L^2(B_R,|y|^a)}^2.
$$
We also use other typical notations for Sobolev spaces. Thus,
$W^{1,2}_0(B_R,|y|^a)$ stands for the closure of $C^\infty_0(B_R)$ in
$W^{1,2}(B_R,|y|^a)$.

For $x\in\R^n$ and $r>0$, we indicate by $\mean{u}_{x, r}$ the
$|y|^a$-weighted integral mean value of a function $u$ over
$B_r(x)$. That is,
$$
\mean{u}_{x, r}=\dashint_{B_r(x)} u|y|^a
=\frac{1}{\om_{n+1+a}r^{n+1+a}}\int_{B_r(x)}u |y|^a,
$$ where $\omega_{n+1+a}=\int_{B_1}|y|^a$ is the $|y|^a$-weighted
volume of the unit ball $B_1$ in $\R^{n+1}$. Similarly to the other
notations, we drop the origin if it is $0$ and write $\mean{u}_r$ for
$\mean{u}_{0,r}$.


\section{Examples of almost minimizers}
\label{sec:exampl-almost-minim}

Before we proceed with the proofs of the main results, we would like
to give some examples of almost minimizers.

\begin{example}\label{example-anz} Let $u\in\mathcal{L}_s(\R^n)$ be a
  solution of
  $$
  (-\La_x)^su+b(x)\cdot \D_x u=0\quad\text{in }\Omega,
  $$
  where $b=(b^1, b^2, \ldots, b^n)\in W^{1, \infty}(\Omega)$ and
  $1/2<s<1$ (or $-1<a<0$). Then $u$ is an almost $s$-fractional
  harmonic with a gauge function $\omega(r)=C r^{-a}$ (note that
  $-a>0$).
\end{example}

\begin{proof} Consider a ball $B_r(x_0)$ centered at $x_0\in \Omega$
  such that $B'_r(x_0)\Subset \Omega$. Without loss of generality
  assume that $x_0=0$. Let $v$ be the minimizer of
$$
\int_{B_r}|\D v|^2 |y|^a
$$
on $u+W^{1,2}(B_r,|y|^a)$.  Then
$$
\int_{B_r}\nabla v\nabla(u-v)|y|^a=0,
$$
and as a consequence,
\begin{align*}
  \int_{B_r}(|\D u|^2-|\D v|^2)|y|^a
  &= \int_{B_r}|\D(u-v)|^2|y|^a.
\end{align*}
Then, we have
\begin{align*}
  \int_{B_r}(|\D u|^2-|\D v|^2)|y|^a&= 2\int_{B_r^+}|\D(u-v)|^2|y|^a\\*
                                    &= 2\int_{B_r^+}|\D(u-v)|^2|y|^a+\div(|y|^a\D(u-v))\,(u-v)\\
                                    &= 2\int_{B_r^+}\div\left(|y|^a\D\left(\frac{(u-v)^2}2\right)\right)\\
                                    &= 2\int_{(\pa B_r)^+}|y|^a(u-v)(u_{\nu}-v_{\nu})-2\int_{B_r'}(u-v)(\partial_y^a u-\partial_y^a v)\\
                                    &= C\int_{B_r'}(u-v)(-\La_x)^su\\*
                                    &=-C\int_{B_r'}(u-v)b^i u_{x_i}  
\end{align*}
with $C=C_{n,a}$.  Next, extending $b^i$ to $\R^{n+1}$ by
$b^i(x, y):=b^i(x)$, we have
\begin{align*}
  \int_{B_r}(|\D u|^2-|\D v|^2)|y|^a
  &= -C\int_{B_r'}(u-v)b^iu_{x_i}\\
  &= C\int_{B_r^+}\pa_{y}\left( (u-v)b^iu_{x_i}\right)\\
  &= C\int_{B_r^+}(u_y-v_y)b^iu_{x_i}+(u-v)b^iu_{x_iy}\\
  &\le C\|b\|_{W^{1, \infty}(\Omega)}\int_{B_r^+}|\D u|^2+|\D v|^2\\
  &\qquad+C\int_{\pa (B_r^+)}(u-v)b^iu_y\nu_{x_i}-C\int_{B_r^+}\pa_{x_i}((u-v)b^i)u_y\\
  &=  C\|b\|_{W^{1, \infty}(\Omega)}\int_{B_r^+}|\D u|^2+|\D v|^2\\
  &\qquad -C\int_{B_r^+}((u_{x_i}-v_{x_i})b^i+(u-v)b^i_{x_i})u_y\\
  &\le C\|b\|_{W^{1, \infty}(\Omega)}\int_{B_r^+}|\D u|^2+|\D v|^2+|u-v|^2.
\end{align*}
Using Poincare's inequality, it follows that
\begin{align*}
  \int_{B_r}|y|^a(|\D u|^2-|\D v|^2) &\le C\int_{B_r}|\D u|^2+|\D v|^2\\
                                     &\le Cr^{-a}\int_{B_r}(|\D u|^2+|\D v|^2)|y|^a\\
                                     &\le C r^{-a}\int_{B_r}|\D u|^2|y|^a.
\end{align*}
Hence,
$$
\int_{B_r(x_0)}|\nabla u|^2|y|^a\leq (1+C
r^{-a})\int_{B_r(x_0)}|\nabla v|^2|y|^a,
$$
for $0<r<r_0$, with $C$ and $r_0$ depending on $n$, $a$, and
$\|b\|_{W^{1,\infty}(\Omega)}$.
\end{proof}

\begin{example}\label{example-sig}
  Let $u\in\mathcal{L}_s(\R^n)$ be a solution of the obstacle problem
  for fractional Laplacian with drift
  \begin{align*}
    \min\{(-\Delta_x)^s u+b(x)\cdot\nabla_x u,u\}=0\quad\text{in }\Omega,
  \end{align*}
  where $b=(b^1, b^2, \ldots, b^n)\in W^{1, \infty}(\Omega)$ and
  $1/2<s<1$ (or $-1<a<0$). Then $u$ is an almost minimizer for
  $s$-fractional obstacle problem in $\Omega$ with an obstacle
  $\psi=0$ and a gauge function $\omega(r)=C r^{-a}$.
\end{example}

The obstacle problem above has been studied earlier in \cite{PetPop15}
and \cite{GarPetPopSVG17}.

\begin{proof} We argue similarly to Example~\ref{example-anz}. Let
  $B_r(x_0)$ centered at $x_0\in \Omega$ such that
  $B'_r(x_0)\Subset \Omega$. Without loss of generality assume that
  $x_0=0$. Let $v$ be the minimizer of
  $$
  \int_{B_r}|\nabla v|^2|y|^a
  $$
  on
  $\mathfrak{K}_{0,u}(B_r,|y|^a)=\{v\in u+W^{1,2}_0(B_r,|y|^a): v\geq
  0\ \text{on}\ B_r'(x_0)\}$.  Next, we write
  \begin{align*}
    \int_{B_r}(|\D u|^2-|\D v|^2)|y|^a
    &= 2\int_{B_r}\nabla u\nabla(u-v)|y|^a-\int_{B_r}|\nabla
      (u-v)|^2|y|^a\\
    &\leq 2\int_{B_r}\nabla u\nabla(u-v)|y|^a\\
    &=4\int_{B_r^+}\nabla u\nabla(u-v)|y|^a+\div(|y|^a\nabla u)(u-v)\\
    &=-4\int_{B_r'}(u-v)\partial_y^au\\
    &=C\int_{B_r'}(u-v)(-\Delta_x)^su\\
    &\begin{multlined}  
      = C\biggl[-\int_{B_r'\cap \{u>0\}}(u-v)b^iu_{x_i}+\int_{B_r'\cap
        \{u=0\}}(-v)\,(-\La_x)^su \biggr]
    \end{multlined}\\
    &\begin{multlined} \le
       C\biggl[-\int_{B_r'\cap\{u>0\}}(u-v)b^iu_{x_i}-\int_{B_r'\cap
       \{u=0\}}(-v)b^iu_{x_i}\biggr]
   \end{multlined}\\
    &= -C\int_{B_r'}(u-v)b^iu_{x_i},
  \end{align*}
  where we used that $(-\La)^su+b^iu_{x_i}\ge0$ and $-v\le 0$ on
  $B_r'\cap \{u=0\}$ in the last inequality.

  Then we complete the proof as in Example~\ref{example-anz}.
\end{proof}


\section{Growth estimates for minimizers}
\label{sec:growth-estim-minim}

In this section we prove growth estimates for $L_a$-harmonic functions
and solutions of the Signorini problem for $L_a$, i.e., minimizers of
$v$ of the weighted Dirichlet integral
$$
\int_{B_r}|\nabla v|^2|y|^a
$$
on $v+W^{1,2}_0(B_r,|y|^a)$ or on the thin obstacle constraint set
$\mathfrak{K}_{0,v}(B_r,|y|^a)$.

The idea is that these estimates will extend to almost minimizers and
will ultimately imply their regularity with the help of
Morrey-Campanato-type space embedding.

The proofs in this section are akin to those in \cite{JeoPet19a} for
almost minimizers of the thin obstacle problem. Yet, one has to be
careful with different growth rates for tangential and normal
derivatives.


\subsection{Growth estimates for $L_a$-harmonic functions}

\begin{lemma}\label{anz-even v-mvp}
  Let $v\in W^{1, 2}(B_R, |y|^a)$ be a solution of $L_av=0$ in
  $B_R$. If $v$ is even in $y$, then for $0<\rho<R$
  \begin{align*}
    \int_{B_{\rho}}|\D_xv|^2|y|^a
    &\le \left(\frac{\rho}R\right)^{n+1+a}\int_{B_R}|\D_xv|^2|y|^a\\
    \int_{B_{\rho}}|v_y|^2|y|^a
    &\le\left(\frac{\rho}R\right)^{n+3+a}\int_{B_R}|v_y|^2|y|^a.
  \end{align*}
\end{lemma}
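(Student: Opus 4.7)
The plan is to exploit the orthogonal polynomial expansion of $L_a$-harmonic functions proved in Appendix~\ref{sec:La-poly-expansion}. By that result one can write $v = \sum_{k\ge 0} P_k$ in $W^{1,2}(B_R,|y|^a)$, where each $P_k$ is a homogeneous $L_a$-harmonic polynomial of degree $k$, mutually orthogonal in $L^2(\pa B_r,|y|^a)$ for every $r\in(0,R]$. Since $v$ is even in $y$, so is each $P_k$. The strategy is to derive analogous orthogonal expansions for $\D_x v$ and $v_y$ and to track how each summand scales with $\rho$; the inequalities will then follow from the nonnegativity of the coefficients together with the monotonicity $(\rho/R)^\gamma \le (\rho/R)^{\gamma_0}$ for $\gamma \ge \gamma_0$ and $\rho \le R$.

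For the first inequality I use that $\pa_{x_i}$ commutes with $L_a$ (the weight $|y|^a$ depends only on $y$), so $\pa_{x_i} P_k$ is again a homogeneous $L_a$-harmonic polynomial, of degree $k-1$ and even in $y$. Sphere orthogonality for this family gives
$$
\int_{B_\rho}|\pa_{x_i} v|^2 |y|^a = \sum_{k\ge 1}\int_{B_\rho}|\pa_{x_i} P_k|^2 |y|^a,
$$
and each summand scales as $\rho^{n+1+2(k-1)+a}$ by homogeneity. The smallest exponent, $n+1+a$, is attained at $k=1$, so the claimed bound holds for each $\pa_{x_i} v$; summing in $i$ yields the first inequality.

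The second inequality needs a new ingredient because $\pa_y$ does \emph{not} commute with $L_a$. The main technical point --- and the step I expect to be the main obstacle --- is the identity that $\pa_y P_k/y$ (well-defined and smooth because $P_k$ is even in $y$) is an $L_{a+2}$-harmonic homogeneous polynomial of degree $k-2$, even in $y$. This is verified by expanding $P_k=\sum_j y^{2j} f_j(x)$, reading off the recurrence $f_{j+1}=-\Delta_x f_j/[2(j+1)(2j+1+a)]$ forced by $L_a P_k=0$, and checking that the coefficients $\{2(j+1)f_{j+1}\}$ of $\pa_y P_k/y$ satisfy the corresponding recurrence for $L_{a+2}$. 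Granted this, I rewrite $\int_{B_\rho}|v_y|^2 |y|^a = \int_{B_\rho}(v_y/y)^2 |y|^{a+2}$, expand $v_y/y=\sum_{k\ge 2}\pa_y P_k/y$, and invoke sphere orthogonality with weight $|y|^{a+2}$ (proved exactly as for $L_a$, via Green's identity for $L_{a+2}$ together with Euler's homogeneity relation). The $k$-th summand scales as $\rho^{n-1+2k+a}$, with minimum exponent $n+3+a$ at $k=2$, producing the second bound.
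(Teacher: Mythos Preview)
Your proof is correct and follows essentially the same route as the paper: polynomial expansion from Appendix~\ref{sec:La-poly-expansion}, orthogonality of homogeneous solutions on spheres, and scaling. The treatment of the first inequality is identical to the paper's.

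For the second inequality there is a minor but genuine variation. The paper rewrites $\int_{B_\rho}|v_y|^2|y|^a=\int_{B_\rho}\bigl||y|^a v_y\bigr|^2|y|^{-a}$ and uses that $|y|^a\partial_y p_k$ is an $L_{-a}$-harmonic homogeneous \emph{function} of degree $k-1+a$; orthogonality is then in $L^2(\partial B_1,|y|^{-a})$, and the exponent $-a\in(-1,1)$ stays in the range of Appendix~\ref{sec:La-poly-expansion}. You instead rewrite $\int_{B_\rho}|v_y|^2|y|^a=\int_{B_\rho}|v_y/y|^2|y|^{a+2}$ and show that $\partial_y p_k/y$ is an $L_{a+2}$-harmonic homogeneous \emph{polynomial} of degree $k-2$. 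Both give the same scaling $\rho^{n-1+a+2k}$ and the same starting index $k=2$ (since $p_1$ is linear in $x$ only). The paper's conjugation $u\mapsto |y|^a u_y$ is the standard one in the Caffarelli--Silvestre literature and keeps the weight exponent in $(-1,1)$; your choice has the pleasant feature that everything remains a polynomial, at the cost of moving the weight exponent to $a+2\in(1,3)$, outside the nominal range of the Appendix. As you note, the sphere orthogonality argument (Green's identity for $L_b$ plus Euler's relation $\partial_\nu q=\deg(q)\,q$ on $\partial B_1$) works for any $b>-1$, so this is harmless.
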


\begin{proof}
  Note that we can write $$v(x, y)=\sum_{k=0}^{\infty}p_k(x, y),$$
  where $p_k$'s are $L_a$-harmonic homogeneous polynomials of degree
  $k$ (see Appendix~\ref{sec:La-poly-expansion}). Then
  $\{\pa_{x_i}p_k\}_{k=1}^{\infty}$ are $L_a$-harmonic homogeneous
  polynomials of degree $k-1$, and thus orthogonal in
  $L^2(\pa B_1, |y|^a)$. Thus,
  \begin{align*}
    \int_{B_{\rho}}|\D_xv|^2|y|^a &= \sum_{k=1}^{\infty}\int_{B_{\rho}}|\D_xp_k|^2|y|^a\\
                                  &= \sum_{k=1}^{\infty}\left(\frac{\rho}R\right)^{n+1+a+2(k-1)}\int_{B_R}|\D_xp_k|^2|y|^a\\
                                  &\le \left(\frac{\rho}R\right)^{n+1+a} \sum_{k=1}^{\infty}\int_{B_R}|\D_xp_k|^2|y|^a\\
                                  &=\left(\frac{\rho}R\right)^{n+1+a}\int_{B_R}|\D_xv|^2|y|^a.
  \end{align*}
  Similarly, $\{|y|^a\pa_yp_k\}_{k=1}^{\infty}$ are $L_{-a}$-harmonic
  homogeneous functions of degree $k-1+a$, and thus orthogonal in
  $L^2(\pa B_1, |y|^{-a})$. Notice that since $p_1(x, y)=p_1(x)$ is
  independent of $y$ variable by the even symmetry, we have
  $|y|^a\pa_yp_1=0$. Thus,
  \begin{align*}
    \int_{B_{\rho}}|v_y|^2|y|^a &= \int_{B_{\rho}}\left| |y|^av_y\right|^2|y|^{-a}\\
                                &= \sum_{k=2}^{\infty}\int_{B_{\rho}}\left| |y|^a\pa_yp_k\right|^2|y|^{-a}\\
                                &=
                                  \sum_{k=2}^{\infty}\left(\frac{\rho}R\right)^{n+1-a+2(k-1+a)}\int_{B_R}| |y|^a\pa_yp_k|^2|y|^{-a}\\
                                &\le \left(\frac{\rho}R\right)^{n+3+a}\int_{B_R}|v_y|^2|y|^a.\qedhere
  \end{align*}
\end{proof}

\begin{lemma} \label{anz-grad-v-holder} Let $v$ be a solution of
  $L_a v=0$ in $B_R$, even in $y$. Then, for $0<\rho<R$,
  \begin{equation}
    \int_{B_{\rho}}|\D_xv-\mean{\D_xv}_{\rho}|^2|y|^a\le \left(\frac{\rho}R\right)^{n+a+3}\int_{B_R}|\D_xv-\mean{\D_xv}_R|^2|y|^a.
  \end{equation}
\end{lemma}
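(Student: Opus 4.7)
The plan is to leverage the orthogonal polynomial expansion of $L_a$-harmonic functions (Appendix~\ref{sec:La-poly-expansion}) that was already used to prove Lemma~\ref{anz-even v-mvp}, and to reduce the present statement to that lemma applied to a modified function. Concretely, write $v=\sum_{k=0}^\infty p_k(x,y)$, where $p_k$ is $L_a$-harmonic and homogeneous of degree $k$. Since $v$ is even in $y$, so is each $p_k$; in particular $p_1(x,y)=p_1(x)$ is a linear function of $x$ alone, and $\nabla_x p_1$ is a constant vector.

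The key observation is that the $|y|^a$-weighted mean $\langle \nabla_x v\rangle_\rho$ is in fact independent of $\rho$ and equals this constant vector $\nabla_x p_1$. Indeed, in polar coordinates,
\[
\int_{B_\rho} p_k\,|y|^a = \left(\int_0^\rho r^{k+n+a}\,dr\right)\int_{\partial B_1} p_k\,|y|^a,
\]
and the spherical integral vanishes for $k\ge 1$ by the $L^2(\partial B_1,|y|^a)$-orthogonality of $L_a$-harmonic homogeneous polynomials of different degrees (here pairing against the constant polynomial $1$). Applying this to the components of $\nabla_x v = \sum_{k\ge 1} \nabla_x p_k$, each $\nabla_x p_k$ being $L_a$-harmonic homogeneous of degree $k-1$, gives $\langle \nabla_x v\rangle_\rho = \nabla_x p_1$ for every $\rho\in(0,R)$.

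Consequently,
\[
\nabla_x v - \langle \nabla_x v\rangle_\rho \;=\; \sum_{k\ge 2}\nabla_x p_k,
\]
which is an orthogonal sum (in $L^2(\partial B_1,|y|^a)$) of $L_a$-harmonic homogeneous polynomials of degree $k-1\ge 1$. Mimicking the computation in Lemma~\ref{anz-even v-mvp}, orthogonality and homogeneity yield
\[
\int_{B_\rho}|\nabla_x p_k|^2|y|^a = \left(\frac{\rho}{R}\right)^{n+1+a+2(k-1)}\!\int_{B_R}|\nabla_x p_k|^2|y|^a \le \left(\frac{\rho}{R}\right)^{n+3+a}\!\int_{B_R}|\nabla_x p_k|^2|y|^a,
\]
the last inequality using $k-1\ge 1$. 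Summing over $k\ge 2$ and using that the same identity $\langle \nabla_x v\rangle_R=\nabla_x p_1$ holds on the right, one obtains the stated bound.

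The only genuinely delicate point is justifying the identification $\langle \nabla_x v\rangle_\rho=\nabla_x p_1$; everything else is a direct transcription of the argument in Lemma~\ref{anz-even v-mvp}. Since the paper's appendix gives the orthogonal decomposition in $L^2(\partial B_1,|y|^a)$ of $L_a$-harmonic polynomials (in particular, pairing any $p_k$ with $k\ge 1$ against the constant $1$ yields zero), this identification is immediate, so no substantive new obstacle arises beyond what was handled in the previous lemma.
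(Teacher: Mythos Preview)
Your argument is correct and essentially identical to the paper's: both expand $v=\sum_k p_k$, identify $\mean{\D_x v}_\rho$ with the constant $\D_x p_1$ (the paper phrases this as $\D_x v(0)$ via the mean value theorem for $L_a$-harmonic functions, while you compute it directly from orthogonality with the constant polynomial), and then sum the homogeneity identities for $k\ge 2$ exactly as in Lemma~\ref{anz-even v-mvp}.
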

\begin{proof} First note that since $L_a(\D_xv)=0$ in $B_R$,
  $\mean{\nabla_x v}=\nabla_x v(0)$ by the mean value theorem for
  $L_a$-harmonic functions, see \cite{CafSalSil08}*{Lemma~2.9}. If we
  use the expansion $v=\sum_{k=0}^\infty p_k(x,y)$ in $B_R$ as in the
  proof of Lemma~\ref{anz-even v-mvp}, then
  $\D_xv-\D_x v(0)=\sum_{k=2}^\infty \D_x p_k$ and consequently
  \begin{align*}
    \int_{B_\rho}|\D_xv-\D_x v(0)|^2|y|^a&=\sum_{k=2}^\infty
                                           \int_{B_\rho}|\D_x p_k|^2|y|^a\\
                                         &=\sum_{k=2}^\infty
                                           \left(\frac{\rho}{R}\right)^{n+a+2k-1}\int_{B_R}|\D_x
                                           p_k|^2|y|^a\\
                                         &\leq \left(\frac{\rho}{R}\right)^{n+a+3}\sum_{k=2}^\infty
                                           \int_{B_R}|\D_x p_k|^2|y|^a\\*
                                         &= \left(\frac{\rho}{R}\right)^{n+a+3}\int_{B_R}|\D_xv-\D_x v(0)|^2|y|^a.\qedhere
  \end{align*}
\end{proof}


\subsection{Growth estimates for the solutions of the Signorini
  problem for $L_a$}

Our estimates for the solutions of the Signorini problem will require
an assumption that $1/2\leq s<1$, or $a\leq 0$. Also, unless stated
otherwise, the obstacle $\psi$ is assumed to be zero.

The first estimate is the analogue of Lemma~\ref{anz-even v-mvp}, but
with less information of the growth of $v_y$.
\begin{lemma}
  Let $v$ be a solution of the Signorini problem for $L_a$ in $B_R$,
  even in $y$, with $a\leq 0$. Then, for $0<\rho<R$
  \begin{equation}\label{sig-v-mon}
    \int_{B_{\rho}}|\D v|^2|y|^a\le \left(\frac{\rho}R \right)^{n+1+a}\int_{B_R}|\D v|^2|y|^a.
  \end{equation}
\end{lemma}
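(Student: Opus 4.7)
My plan is to adapt the $L_a$-harmonic replacement strategy of Lemma~\ref{anz-even v-mvp} to the Signorini setting, showing that $D(r):=\int_{B_r}|\nabla v|^2|y|^a$ satisfies $D(r)/r^{n+1+a}$ non-decreasing, which integrated from $\rho$ to $R$ gives the stated bound.

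A first reduction is that if $h$ denotes the $L_a$-harmonic replacement of $v$ in $B_R$ (with $h=v$ on $\partial B_R$, automatically even in $y$ by uniqueness), then Lemma~\ref{anz-even v-mvp} gives $\int_{B_\rho}|\nabla h|^2|y|^a \leq (\rho/R)^{n+1+a}\int_{B_R}|\nabla h|^2|y|^a$, and unconstrained Dirichlet minimality of $h$ gives $\int_{B_R}|\nabla h|^2|y|^a\leq \int_{B_R}|\nabla v|^2|y|^a$. It therefore remains to establish the monotonicity of $D(r)/r^{n+1+a}$ directly.

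The monotonicity would follow from combining two computations. The first is a competitor estimate: the degree-one homogeneous extension $\tilde v(X):=(|X|/r)\,v(rX/|X|)$ of $v|_{\partial B_r}$ is admissible for the Signorini problem on $B_r$ (since $v\ge 0$ on $B_r'$ implies $\tilde v\ge 0$ there, so $\tilde v\in \mathfrak{K}_{0,v}(B_r,|y|^a)$); explicit evaluation of $\int_{B_r}|\nabla\tilde v|^2|y|^a$ via homogeneity together with the minimization of $v$ yields
$$(n+1+a)D(r)\leq H(r)/r + rD'(r) - r\int_{\partial B_r}v_\nu^2|y|^a,$$
with $H(r):=\int_{\partial B_r}v^2|y|^a$. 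The second is a Rellich--Ne\v{c}as identity for $L_av=0$ in $B_r\setminus\{y=0\}$, obtained by multiplying by $X\cdot\nabla v$ and integrating by parts: the complementarity $v\,\partial_y^av=0$ together with $(\nabla_xv)\,\partial_y^av=0$ on $B_r'$ (the latter because $v\ge 0$ attaining $0$ forces $\nabla_xv=0$ a.e.\ on $\{v=0\}$) eliminates the thin-set contribution, giving
$$rD'(r)=(n-1+a)D(r)+2r\int_{\partial B_r}v_\nu^2|y|^a.$$
The assumption $a\le 0$ ensures that these $|y|^a$-weighted boundary integrals on $B_r'$ make sense and the integration by parts is justified.

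The main obstacle is cleanly closing these two inequalities. Direct combination yields only $rD'(r)\geq (n+3+a)D(r)-2H(r)/r$, which matches the target $rD'(r)\geq (n+1+a)D(r)$ only under the Almgren-frequency-type condition $rD(r)\ge H(r)$. When this fails one expects $v$ not to vanish at the origin, whence the Signorini constraint is locally inactive, $v$ is $L_a$-harmonic near $0$, and Lemma~\ref{anz-even v-mvp} applies directly. A unified treatment might be obtained by using a sharper affine competitor $\bar v(X):=v(0)+(|X|/r)\,(v(rX/|X|)-v(0))$ (still admissible because $v(0)\ge 0$), which replaces $H(r)$ in the competitor inequality by the smaller variance $\int_{\partial B_r}(v-v(0))^2|y|^a$.
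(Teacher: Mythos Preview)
Your approach is genuinely different from the paper's, and as you yourself recognize, it does not close. The paper never touches Rellich--Pohozaev identities or homogeneous competitors. Instead, it argues componentwise: from the complementarity condition $v\,\partial_y^a v=0$ on $B_R'$ one has $L_a(v_{x_i}^\pm)\ge 0$ and $L_{-a}\bigl((y|y|^{a-1}v_y)^\pm\bigr)\ge 0$ in $B_R$, hence $L_a(|\nabla_x v|^2)\ge 0$ and $L_{-a}(||y|^av_y|^2)\ge 0$. The weighted sub-mean-value property then gives, separately,
\[
\int_{B_\rho}|\nabla_x v|^2|y|^a\le\Bigl(\frac{\rho}{R}\Bigr)^{n+1+a}\int_{B_R}|\nabla_x v|^2|y|^a,
\qquad
\int_{B_\rho}v_y^2|y|^a\le\Bigl(\frac{\rho}{R}\Bigr)^{n+1-a}\int_{B_R}v_y^2|y|^a,
\]
and the hypothesis $a\le 0$ enters \emph{only} here, to make $n+1-a\ge n+1+a$ so the two bounds combine into \eqref{sig-v-mon}.

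Your route has a real gap. Combining your competitor inequality with your Rellich identity gives $rD'(r)\ge (n+3+a)D(r)-2H(r)/r$, which yields the target $rD'(r)\ge(n+1+a)D(r)$ only when $N(r):=rD(r)/H(r)\ge 1$. This fails whenever $v(0)>0$: for small $r$ the solution is $L_a$-harmonic and close to a nonzero constant, so $N(r)\to 0$. Your heuristic fix --- ``then the constraint is locally inactive and Lemma~\ref{anz-even v-mvp} applies'' --- only covers radii $r<d:=\operatorname{dist}(0,\{v(\cdot,0)=0\})$. For $r$ between $d$ and the first radius where $N(r)=1$ (if any), neither argument applies, and there is no reason such an interval is empty. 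The affine competitor $\bar v=v(0)+(|X|/r)(v(rX/|X|)-v(0))$ replaces $H(r)$ by $\widetilde H(r)=\int_{\partial B_r}(v-v(0))^2|y|^a$, but you still need $rD(r)\ge \widetilde H(r)$; since $v-v(0)$ is not itself a Signorini solution (the obstacle shifts to $-v(0)$), the usual frequency lower bounds do not transfer, and this inequality is again unproven in the intermediate regime. Finally, your explanation of the role of $a\le 0$ (``the integration by parts is justified'') is off: both the competitor computation and the Rellich identity are valid for all $a\in(-1,1)$; your argument never actually uses $a\le 0$, which is itself a warning sign given that the paper's proof uses it in an essential way.
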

\begin{proof} We use the following property: if $v$ is as in the
  statement of the lemma, then $v_{x_i}$, $i=1,\ldots,n$, and
  $y|y|^{a-1}v_y$ are H\"older continuous in $B_R$, see
  \cite{CafSalSil08}. Moreover, we have that
$$
L_a (v_{x_i}^\pm)\geq 0,\quad L_{-a} ((y|y|^{a-1}v_y)^\pm)\geq
0\quad\text{in }B_R.
$$
This follows from the fact that $L_a v_{x_i}=0$ in $\{\pm v_{x_i}>0\}$
and $L_{-a} (y|y|^{a-1}v_y)=0$ in $\{\pm y|y|^{a-1}v_y>0\}$, by the
complementarity condition $v_yv=0$ on $B_R'$, as well as an argument
in Exercise 2.6 or Exercise 9.5 in \cite{PetShaUra12}. As a
consequence, we have
  $$
  L_a(|\nabla_x v|^2)\geq 0,\quad L_{-a} (||y|^a v_y|^2)\geq
  0\quad\text{in }B_R.
  $$
  We next use the following $|y|^a$-weighted sub-mean value property
  for $L_a$-subharmonic functions: If $L_a w\geq 0$ weakly in $B_R$,
  $-1<a<1$, then
$$
\rho\mapsto\frac{1}{\rho^{n+1+a}}\int_{B_\rho} w|y|^a
$$
is nondecreasing. This follows by integration from the spherical
sub-mean value property, see \cite{CafSalSil08}*{Lemma~2.9}.  Thus, we
have that
\begin{align*}
  \rho&\mapsto \frac{1}{\rho^{n+1+a}}\int_{B_\rho}|\nabla_x v|^2|y|^a
  \\\rho&\mapsto \frac{1}{\rho^{n+1-a}}\int_{B_\rho}|y|^a u_y^2
\end{align*}
are monotone nondecreasing for $0<\rho<R$. This implies
\begin{align*}
  \int_{B_\rho}|\nabla_x v|^2|y|^a&\leq \left(\frac{\rho}R\right)^{n+1+a}\int_{B_R}|\nabla_x v|^2|y|^a\\
  \int_{B_\rho}v_y^2|y|^a&\leq \left(\frac{\rho}R\right)^{n+1-a}\int_{B_R}v_y^2|y|^a.
\end{align*}
In the case $a\leq 0$, we therefore conclude that the
bound~\eqref{sig-v-mon} holds.
\end{proof}

\begin{lemma}\label{sig-grad-v-est1}
  Let $v$ be a solution of the Signorini problem for $L_a$ in $B_R$,
  even in $y$, with $a\leq 0$. If $v(0)=0$, then there exists
  $C=C_{n, \al}$ such that for $0<\rho<r<(3/4)R$,
  \begin{multline*}
    \int_{B_{\rho}}|\D_x v-\mean{\D_x v}_{\rho}|^2|y|^a \le
    \left(\frac{\rho}r\right)^{n+a+3}\int_{B_r}|\D_x v-\mean{\D_x
      v}_{r}|^2|y|^a\\+C\|v\|_{L^{\infty}(B_R)}^2\frac{\rho^{n+2}}{R^{2+2s}}
  \end{multline*}
\end{lemma}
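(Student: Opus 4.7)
My approach would follow the classical Campanato strategy: replace $v$ by its $L_a$-harmonic replacement $h$ in $B_r$ and apply the harmonic growth estimate (Lemma~\ref{anz-grad-v-holder}) to $h$, treating $v - h$ as a controlled error. Let $h \in W^{1,2}(B_r, |y|^a)$ solve $L_a h = 0$ in $B_r$ with $h = v$ on $\partial B_r$; by uniqueness and the $y$-symmetry of $v$, the function $h$ is even in $y$. By Lemma~\ref{anz-grad-v-holder},
$$\int_{B_\rho}|\D_x h - \mean{\D_x h}_\rho|^2|y|^a \leq \left(\frac{\rho}{r}\right)^{n+a+3}\int_{B_r}|\D_x h - \mean{\D_x h}_r|^2|y|^a.$$

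Using the minimizing property of the $|y|^a$-weighted mean as the best constant $L^2(|y|^a)$-approximation, I would write
$$\int_{B_\rho}|\D_x v - \mean{\D_x v}_\rho|^2|y|^a \leq \int_{B_\rho}|\D_x v - \mean{\D_x h}_\rho|^2|y|^a,$$
split $\D_x v = \D_x h + \D_x(v - h)$, and expand the right-hand side into the $L_a$-harmonic contribution from $h$ plus cross and quadratic error terms involving $v - h$. The key quantity to control is $\int_{B_r}|\nabla(v - h)|^2|y|^a$. Since $h$ is the Dirichlet minimizer with $v - h \in W^{1,2}_0(B_r, |y|^a)$, integration by parts against $L_a h = 0$ yields
$$\int_{B_r}|\nabla(v - h)|^2|y|^a = \int_{B_r}|\nabla v|^2|y|^a - \int_{B_r}|\nabla h|^2|y|^a \leq \int_{B_r}|\nabla v|^2|y|^a.$$
A Caccioppoli inequality for Signorini solutions (valid because the complementarity $v\,\partial_y^a v = 0$ on $B_R'$ kills the relevant boundary term) then gives $\int_{B_r}|\nabla v|^2|y|^a \leq C r^{-2}\int_{B_{3r/2}} v^2 |y|^a$, and the optimal $C^{0,1+s}$-growth of Signorini solutions at the free boundary point $0$ (using $v(0)=0$) delivers $|v(X)| \leq C\|v\|_{L^\infty(B_R)}(|X|/R)^{1+s}$. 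Substituting this pointwise bound and using $a = 1 - 2s$, the integral evaluates to $C\|v\|_{L^\infty(B_R)}^2 r^{n+2}/R^{2+2s}$, which is exactly the scaling of the desired error.

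The principal obstacle is this last ingredient, namely invoking the sharp $C^{0, 1+s}$-growth of Signorini solutions for $L_a$, which for general $s$ relies on Almgren-type monotonicity and is one of the deeper results of the fractional obstacle theory. A secondary technical point is controlling the cross terms in the Campanato comparison: a direct $\varepsilon$-Cauchy--Schwarz splitting produces a coefficient $(1+\varepsilon)(\rho/r)^{n+a+3}$ rather than exactly $(\rho/r)^{n+a+3}$, so reaching the stated clean form requires either a more careful choice of the comparison constant or absorbing the small multiplicative discrepancy into the error term $C\|v\|_{L^\infty(B_R)}^2\rho^{n+2}/R^{2+2s}$, using that $r \leq (3/4)R$ gives $(r/R)^{2+2s}$ factors comparable to $1$.
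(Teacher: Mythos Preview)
Your harmonic-replacement strategy breaks down at the step where you invoke the ``optimal $C^{0,1+s}$-growth'' to get $|v(X)|\le C\|v\|_{L^\infty(B_R)}(|X|/R)^{1+s}$. The hypothesis $v(0)=0$ only says that $0$ lies in the coincidence set $\{v(\cdot,0)=0\}$; it does \emph{not} force $0$ to be a free boundary point. If $0$ is interior to the coincidence set and $\partial_y^a v(0)<0$, then near $0$ the solution behaves like $c|y|^{1-a}=c|y|^{2s}$, and since $2s<1+s$ for $s<1$ the pointwise bound fails. Concretely, $v(x,y)=-|y|^{1-a}$ is an admissible Signorini solution with $v(0)=0$, and for it $\int_{B_r}|\nabla v|^2|y|^a\sim r^{n+2s}$, which for small $r$ is strictly larger than $\|v\|_{L^\infty(B_R)}^2\,r^{n+2}/R^{2+2s}\sim R^{2s-2}r^{n+2}$. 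A direct computation shows that $\int_{B_r}|\nabla(v-h)|^2|y|^a$ also scales like $r^{n+2s}$ in this example, so your error term is genuinely too large to close the Campanato iteration with the stated remainder $C\|v\|_{L^\infty(B_R)}^2\rho^{n+2}/R^{2+2s}$. The difficulty is structural: your comparison passes through the \emph{full} gradient $\nabla(v-h)$, whereas the lemma concerns only the tangential gradient $\nabla_x v$, and the normal component $v_y$ can carry much more energy than the tangential part at coincidence-set points.

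The paper avoids this by not introducing a harmonic replacement at all. It works directly with the monotone-type quantity $\varphi(r)=r^{-(n+a+3)}\int_{B_r}|\nabla_x v-\mean{\nabla_x v}_r|^2|y|^a$, differentiates in $r$, and bounds $\varphi'(r)$ from below using only two facts that \emph{do} follow from $v(0)=0$: first, $\nabla_x v(0)=0$ (since $0$ is a minimum of $v$ on $B_R'$), and second, the interior $C^{0,s}$ estimate $[\nabla_x v]_{C^{0,s}(B_{3R/4})}\le C\|v\|_{L^\infty(B_R)}/R^{1+s}$. Together these give $|\nabla_x v(X)|\le C\|v\|_{L^\infty(B_R)}r^s/R^{1+s}$ on $B_r$, which upon integration yields exactly the $r^{n+2}/R^{2+2s}$ scaling. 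Integrating $\varphi'$ from $\rho$ to $r$ then produces the sharp leading coefficient $(\rho/r)^{n+a+3}$ with no multiplicative loss, and the remainder naturally appears with $\rho^{n+2}$ rather than $r^{n+2}$.
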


\begin{proof}
  Define
  $$\vp(r):=\frac 1{r^{n+a+3}}\int_{B_r}|\D_x v-\mean{\D
    _xv}_r|^2|y|^a.$$ Then,
  \begin{align*}
    \vp(r) &= \frac 1{r^{n+a+3}}\left[\int_{B_r}|\D_x v|^2|y|^a-2\mean{\D _xv}_r\int_{B_r}\D_x v|y|^a+\mean{\D_x v}_r^2\int_{B_r}|y|^a\right]\\
           &= \frac 1{r^{n+a+3}}\left[\int_{B_r}|\D _xv|^2|y|^a-\frac 1{\om_{n+1+a}r^{n+1+a}}\left(\int_{B_r}\D_x v|y|^a\right)^2\right].
  \end{align*}
  Thus, using the Cauchy-Schwarz and Young's inequality, we obtain
  \begin{align*}
    \vp'(r)
    &= \frac1{r^{n+a+3}} \bigg[-\frac{n+a+3}r\int_{B_r}|\D_x
      v|^2|y|^a +\int_{\pa B_r}|\D_x v|^2|y|^a\\
    &\qquad +\frac{n+a+3}{\om_{n+1+a}r^{n+2+a}}\left(\int_{B_r}\D
      _xv|y|^a\right)^2
      +\frac{n+1+a}{\om_{n+1+a}r^{n+2+a}}\left(\int_{B_r}\D
      _xv|y|^a\right)^2  \\
    &\qquad -\frac 2{\om_{n+1+a}r^{n+1+a}}\left(\int_{B_r}\D_x v|y|^a\right)\left(\int_{\pa B_r}\D_x v|y|^a\right)\bigg]\\
    &\ge -\frac C{r^{n+a+3}}\left[\frac 1r\int_{B_r}|\D _xv|^2|y|^a+\left(\frac 1r\int_{B_r}|\D_x v|^2|y|^a\right)^{1/2}\left(\int_{\pa B_r}|\D_x v|^2|y|^a\right)^{1/2}\right]\\
    &\ge  -\frac C{r^{n+a+3}}\left[\frac 1r\int_{B_r}|\D_x v|^2|y|^a+\int_{\pa B_r}|\D_x v|^2|y|^a\right].
  \end{align*}
  Next, we note that
$$
[\D_xv]_{C^{0, s}\left(B_{3/4R}\right)}\le \frac
{C_{n,s}}{R^{1+s}}\|v\|_{L^{\infty}(B_R)}.
$$
Indeed, this follows from the known interior regularity for solutions
of the Signorini problem for $L_a$ in $B_1$ in the case $R=1$, see
e.g. \cite{CafSalSil08}, and a simple scaling argument for all
$R>0$. Noting also that $\D_xv(0)=0$, since $v$ attains its minimum on
$B'_r$ at $0$, we have that for $X\in \overline{B_r}$ with $r<(3/4)R$
$$
|\D_xv(X)|=|\D_xv(X)-\D _xv(0)|\le \frac
C{R^{1+s}}\|v\|_{L^{\infty}(B_R)}r^s
$$
and so
$$\frac 1r\int_{B_r}|\D_x v|^2|y|^a+\int_{\pa B_r}|\D_x v|^2|y|^a\le
C\|v\|^2_{L^{\infty}(B_R)}\frac{r^{n+1}}{R^{2+2s}}.$$ This
gives
$$\vp'(r)\ge -\frac C{r^{a+2}}\|v\|_{L^{\infty}(B_R)}^2\frac
1{R^{2+2s}}.$$ Thus, for $0<\rho<r<(3/4)R$,
\begin{align*}
  \vp(r)-\vp(\rho) &= \int_{\rho}^r\vp'(t)\,dt\\*
                   &\ge -C\|v\|_{L^{\infty}(B_R)}^2\frac {\rho^{-1-a}-r^{-1-a}}{R^{2+2s}}.
\end{align*}
Therefore,
\begin{align*}
  &\int_{B_{\rho}}|\D_x v-\mean{\D_x v}_{\rho}|^2|y|^a \\*
  &\qquad\qquad= \rho^{n+a+3}\vp(\rho)\\
  &\qquad\qquad\le \rho^{n+a+3}\left(\vp(r)+C\|v\|_{L^{\infty}(B_R)}^2\frac {\rho^{-1-a}-r^{-1-a}}{R^{2+2s}}\right)\\
  &\qquad\qquad\le \left(\frac{\rho}r\right)^{n+a+3}\int_{B_r}|\D_x v-\mean{\D_x v}_r|^2|y|^a+C\|v\|^2_{L^{\iy}(B_R)}\frac{\rho^{n+2}}{R^{2+2s}}.
    \qedhere
\end{align*}
\end{proof}

\begin{lemma}\label{sig-grad-v-est2}
  Let $v$ be a solution of the Signorini problem for $L_a$ in $B_R$,
  even in $y$. Then there are $C_1=C_{n, a}$, $C_2=C_{n, a}$ such that
  for all $0<\rho< S<(3/8)R,$
  \begin{multline*}
    \int_{B_{\rho}}|\D_xv-\mean{\D _xv}_{\rho}|^2|y|^a \le
    C_1\Big(\frac{\rho}{S}\Big)^{n+a+3}\int_{B_S}|\D_xv-\mean{\D_xv}_{S}|^2|y|^a\\*+C_2
    \|v\|_{L^{\infty}(B_R)}^2\frac{S^{n+2}}{R^{2+2s}}.
  \end{multline*}
\end{lemma}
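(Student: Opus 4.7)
The plan is to prove this by a dichotomy on whether the coincidence set $\Lambda:=\{v(\cdot,0)=0\}$ meets $\overline{B'_{S/2}}$. The guiding idea is that in the ``free'' case $v$ becomes genuinely $L_a$-harmonic near the origin and the decay from Lemma~\ref{anz-grad-v-holder} already suffices, while in the ``contact'' case one already bounds $|\D_xv|$ pointwise on $B_S$ via the interior $C^{0,s}$ estimate for $\D_xv$ together with $\D_xv(z_0)=0$ at a nearby free-boundary point, so the error term alone absorbs everything.

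In the first case $\Lambda\cap\overline{B'_{S/2}}=\emptyset$, the complementarity condition of the Signorini problem gives $\partial_y^av=0$ on $B'_{S/2}$, and together with the evenness in $y$ this upgrades $v$ to a true $L_a$-harmonic function on the full ball $B_{S/2}$ (Caffarelli-Silvestre reflection). Lemma~\ref{anz-grad-v-holder} applied in $B_{S/2}$ would then yield, for $0<\rho<S/2$,
$$
  \int_{B_\rho}|\D_xv-\mean{\D_xv}_\rho|^2|y|^a
  \le\Big(\tfrac{2\rho}{S}\Big)^{n+a+3}\int_{B_{S/2}}|\D_xv-\mean{\D_xv}_{S/2}|^2|y|^a,
$$
and the $B_{S/2}$-integral is controlled by the $B_S$-integral by the minimality of $\mean{\D_xv}_{S/2}$ as a constant approximation in $L^2(B_{S/2},|y|^a)$ together with $B_{S/2}\subset B_S$. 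The range $\rho\in[S/2,S)$ is trivial because $(\rho/S)^{n+a+3}$ is then bounded below by a dimensional constant.

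In the contact case I would pick $z_0\in\overline{B'_{S/2}}\cap\Lambda$; since $v\ge 0$ on the thin space attains its minimum $0$ at $z_0$, one has $\D_xv(z_0)=0$. Using the interior $C^{0,s}$-estimate recalled in the proof of Lemma~\ref{sig-grad-v-est1},
$$
  [\D_xv]_{C^{0,s}(B_{3R/4})}\le \frac{C_{n,s}}{R^{1+s}}\|v\|_{L^{\infty}(B_R)},
$$
together with $|X-z_0|\le 3S/2$ for $X\in B_S\subset B_{3R/4}$ (which uses $S<(3/8)R$), one deduces $|\D_xv(X)|\le C\,\|v\|_{L^{\infty}(B_R)}S^{s}/R^{1+s}$ on all of $B_S$. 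Integrating, using $\int_{B_S}|y|^a=\omega_{n+1+a}S^{n+1+a}$ and the Caffarelli-Silvestre identity $a=1-2s$ (so that $n+1+a+2s=n+2$), one obtains
$$
  \int_{B_\rho}|\D_xv-\mean{\D_xv}_\rho|^2|y|^a
  \le\int_{B_S}|\D_xv|^2|y|^a
  \le C\,\|v\|_{L^{\infty}(B_R)}^{2}\,\frac{S^{n+2}}{R^{2+2s}},
$$
so the error term alone already dominates the left-hand side.

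I do not anticipate any serious obstacle; the only bookkeeping is (i) that the $C^{0,s}$-estimate applies on all of $B_S$, guaranteed by $S<(3/8)R<(3/4)R$, and (ii) that the exponent arithmetic $n+1+a+2s=n+2$ produces exactly the form $S^{n+2}/R^{2+2s}$ in the statement. Combining the two cases would give $C_1=2^{n+a+3}$ from Case~1 and a purely dimensional $C_2=C_{n,a}$ from Case~2.
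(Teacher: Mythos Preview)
Your argument is correct and takes a cleaner route than the paper. The paper sets $d=\dist(0,\Lambda)$ and runs a multi-case analysis on how $d$ compares with $S$ and $\rho$; when $d<S/4$ it recenters at a nearby contact point $x_1\in\partial B'_d$ and invokes Lemma~\ref{sig-grad-v-est1} (the ODE-type estimate for $\varphi(r)$) on balls centered at $x_1$. Your dichotomy on whether $\Lambda$ meets $\overline{B'_{S/2}}$ collapses this: in the free case $v$ is genuinely $L_a$-harmonic in $B_{S/2}$ and Lemma~\ref{anz-grad-v-holder} gives the decay with no error; in the contact case the pointwise bound $|\D_xv|\le C\|v\|_{L^\infty(B_R)}S^s/R^{1+s}$ on all of $B_S$ (from $\D_xv(z_0)=0$ and the $C^{0,s}$ seminorm estimate) already controls the entire left-hand side by the error term after integrating against $|y|^a$ and using $n+1+a+2s=n+2$. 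In particular your proof bypasses Lemma~\ref{sig-grad-v-est1} altogether. What the paper's approach buys is the sharper intermediate statement of Lemma~\ref{sig-grad-v-est1} itself (decay with error $\rho^{n+2}/R^{2+2s}$ when the center is a contact point), which may be of independent use; but for establishing Lemma~\ref{sig-grad-v-est2} as stated, your direct integration is sufficient and shorter.
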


\begin{proof}If $\rho\ge S/8$, then we immediately have
  \begin{align*}
    \int_{B_{\rho}}|\D_xv-\mean{\D_xv}_{\rho}|^2|y|^a &\le C\left(\frac{8\rho}{S}\right)^{n+a+3}\int_{B_{\rho}}|\D_xv-\mean{\D_xv}_\rho|^2|y|^a\\
                                                      &\le C\left(\frac{\rho}{S}\right)^{n+a+3}\int_{B_S}|\D_xv-\mean{\D_xv}_S|^2|y|^a.
  \end{align*}
  Thus we may assume $\rho<S/8$. Due to Lemma~\ref{sig-grad-v-est1},
  we may assume $v(0)>0$. Let
  $d:=\dist\left(0, \{v(\cdot, 0)=0\}\right)>0$. Then $L_av=0$ in
  $B_d$. Thus, if $d\ge S$, we may use Lemma~\ref{anz-grad-v-holder}
  to obtain
  \begin{align*}
    \int_{B_{\rho}}|\D_xv-\mean{\D_xv}_{\rho}|^2|y|^a &\le \left(\frac{\rho}S\right)^{n+a+3}\int_{B_{S}}|\D_xv-\mean{\D_xv}_{S}|^2|y|^a.
  \end{align*}
  Thus we may also assume $d<S$.

  \medskip\noindent \emph{Case 1.} $S/4\le d\,(<S)$.

  \medskip\noindent \emph{Case 1.1.} Suppose $0<\rho<d\,(<S)$. Then
  using $L_a(\D_xv)=0$ in $B_d$ again,
  \begin{align*}
    \int_{B_{\rho}}|\D_xv-\mean{\D_xv}_{\rho}|^2|y|^a&\le \left(\frac{\rho}d\right)^{n+a+3}\int_{B_{d}}|\D_xv-\mean{\D_xv}_{d}|^2|y|^a\\
                                                     &\le C\left(\frac{\rho}S\right)^{n+a+3}\int_{B_S}|\D_xv-\mean{\D_xv}_S|^2|y|^a.
  \end{align*}

  \medskip\noindent \emph{Case 1.2.} Suppose $\rho\ge d\,(\ge
  S/4)$. Then
$$
\int_{B_{\rho}}|\D_xv-\mean{\D_xv}_{\rho}|^2|y|^a\le
\left(\frac{4\rho}S\right)^{n+a+3}\int_{B_S}|\D_xv-\mean{\D_xv}_S|^2|y|^a.$$

\medskip\noindent \emph{Case 2.} $0<d<S/4$.

\medskip\noindent \emph{Case 2.1.} Suppose $\rho<d/2$. Take
$x_1\in \pa(B_d')$ such that $v(x_1)=0$. Then using inclusions
$B_{\rho}\subset B_{d/2}\subset B_{(3/2)d}(x_1)\subset
B_{S/2}(x_1)\subset B_{R/2}(x_1)$, $L_av=0$ in $B_d$ and the preceding
Lemma~\ref{sig-grad-v-est1}, we obtain
\begin{align*}
  &\int_{B_{\rho}}|\D_xv-\mean{\D_xv}_{\rho}|^2|y|^a\\*
  &\qquad\begin{aligned}
    &\le \left(\frac{2\rho}d\right)^{n+a+3}\int_{B_{d/2}}|\D_xv-\mean{\D_xv}_{d/2}|^2|y|^a\\
    &\le \left(\frac{2\rho}d\right)^{n+a+3}\int_{B_{(3/2)d}(x_1)}|\D_xv-\mean{\D_xv}_{x_1, (3/2)d}|^2|y|^a\\
    &\begin{multlined}
      \le
      \left(\frac{2\rho}d\right)^{n+a+3} \biggl[\left(\frac{3d}S\right)^{n+a+3}\int_{B_{S/2}(x_1)}|\D_xv-\mean{\D_xv}_{x_1,S/2}|^s|y|^a\\+C\|v\|^2_{L^{\infty}(B_{R/2}(x_1))}\frac{S^{n+2}}{R^{2+2s}}\biggr]
    \end{multlined}
    \\
    &\le
    C\left(\frac{\rho}S\right)^{n+a+3}\int_{B_S}|\D_xv-\mean{\D_xv}_S|^2|y|^a+C\|v\|^2_{L^{\infty}(B_R)}\frac{S^{n+2}}{R^{2+2s}}
  \end{aligned}
\end{align*}

\medskip\noindent \emph{Case 2.2.} Suppose $d/2\le \rho$. Then we see
that $B_{\rho}\subset B_{3\rho}(x_1)\subset B_{S/2}(x_1)\subset
B_S$. As we did in Case 2.1, we have
\begin{align*}
  &\int_{B_{\rho}}|\D_xv-\mean{\D_xv}_{\rho}|^2|y|^a\\
  &\qquad\le \int_{B_{3\rho}(x_1)}|\D_xv-\mean{\D_xv}_{x_1, 3\rho}|^2|y|^a\\
  &\qquad
    \begin{multlined}
      \le
      C\left(\frac{\rho}S\right)^{n+a+3}\int_{B_{S/2}(x_1)}|\D_xv-\mean{\D_xv}_{x_1,S/2}|^2|y|^a\\
      +C\|v\|^2_{L^{\iy}(B_{R/2}(x_1))}\frac{S^{n+2}}{R^{2+2s}}
    \end{multlined}
  \\
  &\qquad\le C\left(\frac{\rho}S\right)^{n+a+3}\int_{B_S}|\D_xv-\mean{\D_xv}_S|^2|y|^a+C\|v\|^2_{L^{\iy}(B_R)}\frac{S^{n+2}}{R^{2+2s}}.\qedhere
\end{align*}
\end{proof}

\begin{corollary}\label{sig-grad-v-est3}
  Let $v$ be a solution of the Signorini problem for $L_a$ in $B_R$,
  even in $y$. Then there are $C_1=C_{n, a}$, $C_2=C_{n, a}$ such that
  for all $0<\rho< S<(3/16)R,$
  \begin{multline*}
    \int_{B_{\rho}}|\D_xv-\mean{\D_xv}_{\rho}|^2|y|^a \le
    C_1\Big(\frac{\rho}{S}\Big)^{n+a+3}\int_{B_S}|\D_xv-\mean{\D_xv}_{S}|^2|y|^2\\+C_2
    \mean{v^2}_{R}\frac{S^{n+2}}{R^{2+2s}}.
  \end{multline*}
\end{corollary}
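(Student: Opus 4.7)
The statement differs from Lemma~\ref{sig-grad-v-est2} only by replacing $\|v\|_{L^\infty(B_R)}^2$ with the $|y|^a$-weighted average $\mean{v^2}_R$, at the expense of halving the admissible range for $S$. The natural plan is therefore to apply Lemma~\ref{sig-grad-v-est2} on the smaller ball $B_{R/2}$ and estimate $\|v\|_{L^\infty(B_{R/2})}^2$ by $\mean{v^2}_R$ via a weighted sub-mean value argument.

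The main step is to establish the sup estimate
\[
  \|v\|_{L^\infty(B_{R/2})}^2 \le C_{n,a}\,\mean{v^2}_R.
\]
To achieve this, I would first verify that $v^2$ is $L_a$-subharmonic in $B_R$ in the weak sense, i.e., $\int_{B_R}|y|^a\nabla(v^2)\cdot\nabla\zeta\le 0$ for every $\zeta\in C^\infty_c(B_R)$ with $0\le\zeta\le 1$. The key observation is that for such $\zeta$, the perturbations $\eta=\pm \zeta v$ are admissible in the variational inequality defining the Signorini solution: since $v\ge 0$ on $B_R'$, both $v(1+\zeta)\ge 0$ and $v(1-\zeta)\ge 0$ on $B_R'$. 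This yields the equality
\[
  \int_{B_R}|y|^a\nabla v\cdot\nabla(\zeta v) = 0,
\]
and rewriting $\nabla(v^2)=2v\nabla v$ together with the product rule gives
\[
  \int_{B_R}|y|^a\nabla(v^2)\cdot\nabla\zeta = -2\int_{B_R}|y|^a\zeta|\nabla v|^2 \le 0,
\]
as desired. Once subharmonicity is in hand, the $|y|^a$-weighted sub-mean value property (as used in the proof of the preceding lemma, see \cite{CafSalSil08}*{Lemma~2.9}) applied to $v^2$ on balls $B_{R/2}(X_0)\subset B_R$ centered at arbitrary $X_0\in B_{R/2}$ yields the desired pointwise bound.

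Once the sup estimate is available, the corollary follows directly: apply Lemma~\ref{sig-grad-v-est2} with $R$ replaced by $R/2$. The hypothesis $S<(3/8)(R/2)=(3/16)R$ is exactly the one in the corollary. The lemma produces an error term of the form $C\,\|v\|_{L^\infty(B_{R/2})}^2\,S^{n+2}/(R/2)^{2+2s}$, and substituting the sup estimate and absorbing the factor $2^{2+2s}$ into $C_2$ yields the claimed bound with $\mean{v^2}_R\,S^{n+2}/R^{2+2s}$.

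The main obstacle is really just the subharmonicity of $v^2$: one must be careful with the variational formulation on the thin set $B_R'$ and justify using $\pm\zeta v$ as admissible perturbations. Everything else is bookkeeping: rescaling the radius by a factor of two and tracking constants.
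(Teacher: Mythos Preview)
Your overall plan---halve the radius, apply Lemma~\ref{sig-grad-v-est2} on $B_{R/2}$, and feed in a sup--$L^2$ bound $\|v\|_{L^\infty(B_{R/2})}^2\le C\mean{v^2}_R$---is exactly what the paper does. Your derivation that $L_a(v^2)\ge 0$ via the variational inequality (testing with $w=v(1\pm\zeta)$) is correct and is a nice alternative to the paper's route, which instead observes that $v^\pm$ are $L_a$-subsolutions (since $L_a v^\pm=0$ on $\{v^\pm>0\}$) and then applies the local boundedness Theorem~2.3.1 in \cite{FabKenSer82} to each of $v^\pm$.

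There is, however, a gap in your last step. The sub-mean value property you invoke from \cite{CafSalSil08}*{Lemma~2.9}---the monotonicity of $r\mapsto r^{-(n+1+a)}\int_{B_r(X_0)}w|y|^a$---is specific to balls centered on the thin space $\{y=0\}$, because it relies on the weight $|y|^a$ being radial about the center. For $X_0\in B_{R/2}$ off the thin space this monotonicity is not available, so you do not get the pointwise bound on all of $B_{R/2}$, which is what Lemma~\ref{sig-grad-v-est2} actually requires. The fix is exactly the paper's: since $|y|^a$ is an $A_2$ weight, the De Giorgi--Nash--Moser theory for degenerate elliptic equations (specifically \cite{FabKenSer82}*{Theorem~2.3.1}) gives $\sup_{B_{R/2}}v^\pm\le C\bigl(\dashint_{B_R}(v^\pm)^2|y|^a\bigr)^{1/2}$, hence $\|v\|_{L^\infty(B_{R/2})}^2\le C\mean{v^2}_R$. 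Once you replace the mean-value citation with this, your argument is complete and equivalent to the paper's.
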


\begin{proof}
  Since $v^{\pm}=\max(\pm v, 0)\ge 0$ and $L_a(v^{\pm})=0$ in
  $\{v^{\pm}>0\}$, we have $L_a(v^{\pm})\ge 0$ in $B_R$. (For this,
  one may follow the argument in Exercise 2.6 or Exercise 9.5 in
  \cite{PetShaUra12}.) Thus, we have by Theorem~2.3.1 in
  \cite{FabKenSer82}
  $$ \sup_{B_{R/2}}v^{\pm}\le C\left(\frac
    1{\om_{n+1+a}R^{n+1+a}}\int_{B_R}\left(v^{\pm}\right)^2|y|^a\right)^{1/2}.
$$
Hence, $$\|v\|_{L^{\iy}(B_{R/2})}^2 \le C\mean{v^2}_{R},$$ which
completes the proof.
\end{proof}


\section{Almost $s$-fractional harmonic functions}
\label{sec:almost-s-fractional}

In this section we prove Theorem~\ref{mthm:I}, by deducing growth
estimates for almost minimizers from that of minimizers and then
applying the Morrey-Campanato space embedding to deduce the regularity
of almost minimizers.

\begin{theorem}[Almost Lipschitz regularity]
  \label{thm:anz-holder}
  If $u$ is an almost $s$-fractional harmonic function in $B_1'$,
  $0<s<1$, then $u\in C^{0, \si}(B'_1)$ for any $0<\si<1$.
\end{theorem}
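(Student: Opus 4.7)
The plan is to carry out the comparison scheme of Anzellotti, adapted to the weighted energy $\int |\nabla\,\cdot\,|^2|y|^a$. Fix $x_0\in B_1'$ and a radius $r$ small enough that $B_r'(x_0)\Subset B_1'$. Let $v$ be the $L_a$-harmonic replacement of $u$ in $B_r(x_0)$, i.e., the minimizer of $\int_{B_r(x_0)}|\nabla w|^2|y|^a$ over $w\in u+W^{1,2}_0(B_r(x_0),|y|^a)$; by symmetry $v$ is even in $y$. The Euler--Lagrange equation for $v$ gives the Dirichlet orthogonality of $v$ with $u-v$, so, writing $\Phi(r):=\int_{B_r(x_0)}|\nabla u|^2|y|^a$, we obtain
\begin{equation*}
\Phi(r) = \int_{B_r(x_0)}|\nabla v|^2|y|^a + \int_{B_r(x_0)}|\nabla(u-v)|^2|y|^a.
\end{equation*}
Combined with the almost-minimizing inequality $\Phi(r)\le(1+\omega(r))\int_{B_r(x_0)}|\nabla v|^2|y|^a$, this yields the crucial bound
\begin{equation*}
\int_{B_r(x_0)}|\nabla(u-v)|^2|y|^a \le \omega(r)\,\Phi(r).
\end{equation*}

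Next I would invoke the growth estimates for the $L_a$-harmonic function $v$. Lemma~\ref{anz-even v-mvp} gives, for $0<\rho<r$,
\begin{equation*}
\int_{B_\rho(x_0)}|\nabla v|^2|y|^a \le \Big(\frac{\rho}{r}\Big)^{n+1+a}\int_{B_r(x_0)}|\nabla v|^2|y|^a \le \Big(\frac{\rho}{r}\Big)^{n+1+a}\Phi(r),
\end{equation*}
where the tangential and normal rates combine since $(\rho/r)^{n+3+a}\le(\rho/r)^{n+1+a}$. Splitting $|\nabla u|^2\le 2|\nabla v|^2 + 2|\nabla(u-v)|^2$ on $B_\rho(x_0)$ then produces the decay inequality
\begin{equation*}
\Phi(\rho) \le 2\Big(\frac{\rho}{r}\Big)^{n+1+a}\Phi(r) + 2\omega(r)\,\Phi(r).
\end{equation*}
For any $\gamma<n+1+a$, a standard iteration (choose $\tau\in(0,1)$ with $2\tau^{n+1+a}\le\tau^\gamma/2$, then $r_1>0$ small so that $2\omega(r)\le\tau^\gamma/2$ on $(0,r_1)$, and iterate the resulting inequality $\Phi(\tau^{k+1}r_1)\le \tau^\gamma\Phi(\tau^k r_1)$) yields $\Phi(r)\le Cr^\gamma$ for $r\in(0,r_1)$, with $C$ depending on $\gamma$, $\omega$, and $\Phi(r_1)$.

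Finally, the weighted Poincar\'e inequality converts this estimate into
\begin{equation*}
\int_{B_r(x_0)}|u-\mean{u}_{x_0,r}|^2|y|^a \le Cr^2\Phi(r)\le Cr^{\gamma+2},
\end{equation*}
and choosing $\gamma$ close enough to $n+1+a$ makes the right-hand side at most $Cr^{n+1+a+2\sigma}$ for any prescribed $\sigma<1$. The Morrey--Campanato embedding of Appendix~\ref{sec:morr-camp-space} then delivers $u\in C^{0,\sigma}(B_1')$. The main obstacle is nothing conceptually novel---it is the bookkeeping of carrying the weight $|y|^a$ through the comparison step and verifying that the orthogonality identity cleanly absorbs the $(1+\omega(r))$ factor into the small error $\omega(r)\Phi(r)$---while the tangential/normal asymmetry in Lemma~\ref{anz-even v-mvp} is harmless here (only the weaker rate $(\rho/r)^{n+1+a}$ is needed), the sharper rate for $v_y$ becoming relevant only for the $C^{1,\beta}$ part of Theorem~\ref{mthm:I}.
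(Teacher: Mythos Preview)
Your proposal is correct and follows essentially the same approach as the paper: compare $u$ with its $L_a$-harmonic replacement $v$, use the orthogonality $\int_{B_r}\nabla v\cdot\nabla(u-v)|y|^a=0$ together with almost minimality to bound the error by $\omega(r)$ times the energy, apply Lemma~\ref{anz-even v-mvp} to get the $(\rho/r)^{n+1+a}$ decay for $v$, and then iterate (the paper invokes Lemma~\ref{lem:HL} where you iterate by hand, which is the same thing) before concluding via weighted Poincar\'e and the Campanato embedding of Appendix~\ref{sec:morr-camp-space}. The only cosmetic difference is that you bound the error term by $\omega(r)\Phi(r)$ while the paper bounds it by $\omega(R)\int_{B_R}|\nabla v|^2|y|^a$; since $\int|\nabla v|^2|y|^a\le\Phi(r)$, both lead to the same decay inequality.
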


Besides the growth estimates for minimizers we will also need the
following lemma.

\begin{lemma}\label{lem:HL}
  Let $r_0>0$ be a positive number and let
  $\vp:(0,r_0)\to (0, \infty)$ be a nondecreasing function. Let $a$,
  $\beta$, and $\gamma$ be such that $a>0$, $\gamma >\beta >0$. There
  exist two positive numbers $\e=\e_{a, \gamma,\beta}$,
  $c=c_{a,\gamma,\beta}$ such that, if
$$
\vp(\rho)\le
a\Bigl[\Bigl(\frac{\rho}{r}\Bigr)^{\gamma}+\e\Bigr]\vp(r)+b\, r^{\be}
$$ for all $\rho$, $r$ with $0<\rho\leq r<r_0$, where $b\ge 0$,
then one also has, still for $0<\rho<r<r_0$,
$$
\vp(\rho)\le
c\Bigl[\Bigl(\frac{\rho}{r}\Bigr)^{\be}\vp(r)+b\rho^{\be}\Bigr].
$$
\end{lemma}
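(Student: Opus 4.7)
This is the classical Campanato/Giaquinta iteration lemma, which I would prove by a standard dyadic contraction argument. The plan is to fix an intermediate exponent $\beta'$ strictly between $\be$ and $\g$ (say $\beta' = (\be+\g)/2$) and then choose a fixed scale ratio $\tau \in (0,1)$ small enough that $2a\tau^\g \leq \tau^{\beta'}$; this is possible precisely because $\g > \beta'$, so $\tau^{\g - \beta'} \to 0$ as $\tau \to 0$. Setting $\e := \tau^{\beta'}/(2a)$ then gives $a[\tau^\g + \e] \leq \tau^{\beta'}$, and applying the hypothesis with $\rho = \tau r$ produces the one-step contraction
$$\vp(\tau r) \leq \tau^{\beta'}\vp(r) + b r^{\be}, \qquad 0 < r < r_0.$$

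Next, I would iterate this contraction. A straightforward induction on $k \geq 0$ gives
$$\vp(\tau^k r) \leq \tau^{k\beta'}\vp(r) + b r^{\be} \sum_{i=0}^{k-1} \tau^{i\beta' + (k-1-i)\be}.$$
Factoring $\tau^{(k-1)\be}$ out of the sum converts it into a geometric series in $\tau^{\beta'-\be} < 1$, bounded by $(1-\tau^{\beta'-\be})^{-1}$. Using $\tau^{k\beta'} \leq \tau^{k\be}$ and rewriting $\tau^{(k-1)\be} r^{\be} = \tau^{-\be}(\tau^k r)^{\be}$, this simplifies to
$$\vp(\tau^k r) \leq \tau^{k\be}\vp(r) + C b(\tau^k r)^{\be},$$
with $C = C(a, \g, \be)$.

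To pass from the dyadic sequence $\rho = \tau^k r$ to arbitrary $\rho \in (0, r)$, given such $\rho$ I would pick the unique integer $k \geq 0$ with $\tau^{k+1}r < \rho \leq \tau^k r$. Monotonicity of $\vp$ gives $\vp(\rho) \leq \vp(\tau^k r)$, and the lower bound on $\rho$ yields $\tau^{k\be} \leq \tau^{-\be}(\rho/r)^{\be}$. Absorbing the factor $\tau^{-\be}$ into the final constant $c = c(a, \g, \be)$ produces the desired inequality.

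The only real subtlety --- and it is not so much an obstacle as a place to be careful --- is the strict inequality $\beta' > \be$ in the choice of intermediate exponent. If one naively iterated the hypothesis at exponent $\be$ itself, the sum above would collapse into $k \tau^{(k-1)\be}$ --- an arithmetic, not geometric, progression --- and the would-be constant $c$ would grow without bound in $k$. Interpolating through a strictly larger $\beta' < \g$ pays a slightly smaller $\tau$ in exchange for a convergent geometric tail, which is exactly what makes the lemma work.
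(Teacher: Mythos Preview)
Your proof is correct and is precisely the standard dyadic iteration argument. The paper itself does not give a proof but simply refers to Lemma~3.4 in \cite{HanLin97}, whose argument is exactly the one you outline here.
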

\begin{proof} See Lemma~3.4 in \cite{HanLin97}.
\end{proof}

\begin{proof}[Proof of Theorem~\ref{thm:anz-holder}]
  Let $K$ be a compact subset of $B_1'$ containing $0$. Take
  $\delta=\delta_{n, \omega, \si, K}>0$ such that
  $\delta<\dist(K, \pa B_1')$ and $\omega(\delta)\le \e$, where
  $\e=\e_{2, n+1+a, n-1+a+2\si}$ is as Lemma~\ref{lem:HL}. For
  $0<R<\delta$, let $v$ be a minimizer of
$$
\int_{B_R}|\D v|^2 |y|^a
$$ 
on $u+W^{1,2}_0(B_R)$. Then $L_av=0$ in $B_R$. In particular,
\begin{align*}\int_{B_R}|y|^a\D v\cdot \D(u-v)
  =0,
\end{align*}
and hence
\begin{align*}
  \int_{B_R}|\D(u-v)|^2|y|^a&= \int_{B_R}|\D u|^2|y|^a-\int_{B_R}|\D v|^2|y|^a-2\int_{B_R}|y|^a\D v\cdot \D(u-v)\\
                            &\le \omega(R)\int_{B_R}|\D v|^2|y|^a.
\end{align*}
Moreover, by Lemma~\ref{anz-even v-mvp}, for $0<\rho<R$ we have
$$ \int_{B_{\rho}}|\D v|^2|y|^a\le
\left(\frac{\rho}R\right)^{n+1+a}\int_{B_R}|\D v|^2|y|^a.
$$
Thus \begin{align*}
       \int_{B_{\rho}}|\D u|^2|y|^a &\le 2\int_{B_{\rho}}|\D v|^2|y|^a+2\int_{B_{\rho}}|\D(u-v)|^2|y|^a\\
                                    &\le 2\left(\frac{\rho}R\right)^{n+1+a}\int_{B_R}|\D v|^2|y|^a+2\int_{B_{\rho}}|\D(u-v)|^2|y|^a\\
                                    &\le 2\left(\frac{\rho}R\right)^{n+1+a}\int_{B_R}|\D v|^2|y|^a+2\omega(R)\int_{B_R}|\D v|^2|y|^a\\*
                                    &\le
                                      2\left[\left(\frac{\rho}R\right)^{n+1+a}+\e\right]\int_{B_R}|\D
                                      u|^2|y|^a.
     \end{align*}
     By Lemma~\ref{lem:HL},
     \begin{align*}
       \int_{B_{\rho}}|\D u|^2|y|^a
       &\le C_{n, a, \si}\left(\frac{\rho}R\right)^{n-1+a+2\si}\int_{B_R}|\D u|^2|y|^a,
     \end{align*}
     for any $0<\sigma<1$.  Taking $R\nearrow \delta$ we have
     \begin{equation}\label{anz-holder-1}\int_{B_{\rho}}|\D
       u|^2|y|^a\le
       C_{n, a, \si,\delta}\|\D u\|^2_{L^{2}(B_1, |y|^a)}\rho^{n-1+a+2\si}.
     \end{equation}
     By weighted Poincar\'e inequality
     \cite{FabKenSer82}*{Theorem~(1.5)}
$$\int_{B_{\rho}}|u-\mean{u}_{\rho}|^2|y|^a\le C_{n, a, \si,\delta}\|\D u\|^2_{L^{2}(B_1, |y|^a)}\rho^{n+1+a+2\si}.$$
Now, a similar estimates holds at all point $x_0\in K$, which implies
the H\"older continuity of $u$ (see Theorem~\ref{thm:morr-camp-space})
with
\[
  \|u\|_{C^{0, \si}(K)} \le C_{n, a,\omega, \si, K}\|u\|_{W^{1,2}(B_1,
    |y|^a)}.\qedhere
\]
\end{proof}

\begin{theorem}[$C^{1,\beta}$ regularity]

  \label{anz-grad-u-holder} If $u$ is an almost $s$-fractional
  harmonic function in $B_1'$, $0<s<1$, with gauge function
  $\omega(r)=r^\alpha$, $\alpha>0$, then $\D_xu\in C^{0,\be}(B'_1)$
  for some $\beta=\beta(n,s,\alpha)$.
\end{theorem}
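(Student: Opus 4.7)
The plan is to imitate the Campanato-type argument of Theorem~\ref{thm:anz-holder}, but now for the tangential gradient $\D_xu$, using the improved decay of Lemma~\ref{anz-grad-v-holder} for the $L_a$-harmonic comparison function. The extra gauge factor $r^{\alpha}$ will be absorbed into the source term of the iteration Lemma~\ref{lem:HL}, giving a Morrey-Campanato estimate for $\D_xu$.

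Fix a compact set $K\Subset B_1'$ and choose $\delta>0$ with $B_{2\delta}(x_0)\Subset B_1$ for all $x_0\in K$. For $x_0\in K$ and $0<R<\delta$, let $v\in u+W^{1,2}_0(B_R(x_0),|y|^a)$ be the $L_a$-harmonic replacement of $u$. Exactly as in the proof of Theorem~\ref{thm:anz-holder}, the Euler--Lagrange identity for $v$ combined with the almost-minimizing property yields
$$
\int_{B_R(x_0)}|\D(u-v)|^2|y|^a\le \omega(R)\int_{B_R(x_0)}|\D v|^2|y|^a\le R^{\al}\int_{B_R(x_0)}|\D u|^2|y|^a.
$$
By Theorem~\ref{thm:anz-holder} (applied with any $\sigma<1$), the almost Lipschitz bound \eqref{anz-holder-1} holds uniformly for $x_0\in K$, giving $\int_{B_R(x_0)}|\D u|^2|y|^a\le CR^{n+1+a-\e}$ for any $\e>0$. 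Therefore
$$
\int_{B_R(x_0)}|\D(u-v)|^2|y|^a\le CR^{n+1+a+\al-\e}.
$$

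Set $\psi(\rho):=\int_{B_\rho(x_0)}|\D_xu-\mean{\D_xu}_{x_0,\rho}|^2|y|^a$. Using that $\int_{B_\rho}|f-\mean{f}_\rho|^2|y|^a\le \int_{B_\rho}|f-c|^2|y|^a$ for any constant $c$, first with $f=\D_xu$, $c=\mean{\D_xv}_{x_0,\rho}$, and then expanding via the triangle inequality, and analogously bounding $\int_{B_R}|\D_xv-\mean{\D_xv}_{x_0,R}|^2|y|^a$ in terms of $\psi(R)$ and the $u-v$ defect, Lemma~\ref{anz-grad-v-holder} applied to $v$ delivers, for $0<\rho<R<\delta$,
$$
\psi(\rho)\le C_1\Bigl(\frac{\rho}{R}\Bigr)^{n+a+3}\psi(R)+C_2\,R^{n+1+a+\al-\e}.
$$

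Now choose $\e>0$ so small that $\al-\e<2$ and $\al-\e>0$, and set $2\beta_0:=\al-\e$ (possibly further reduced so $\beta_0<1$). Apply Lemma~\ref{lem:HL} with $\gamma=n+a+3$ and $\be=n+1+a+2\beta_0<\gamma$ to conclude
$$
\psi(\rho)\le C\,\rho^{n+1+a+2\beta_0}\qquad\text{for all }0<\rho<\delta,\ x_0\in K,
$$
which is precisely the Campanato-type condition
$$
\dashint_{B_\rho(x_0)}\bigl|\D_xu-\mean{\D_xu}_{x_0,\rho}\bigr|^2|y|^a\le C\,\rho^{2\beta_0}.
$$
Invoking the Morrey-Campanato space embedding from Appendix~\ref{sec:morr-camp-space} (applied componentwise to $\D_xu$, which is even in $y$) yields $\D_xu\in C^{0,\beta_0}(K)$, with $\beta_0=\beta_0(n,s,\al)$. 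Since $K$ is an arbitrary compact subset of $B_1'$, this proves $\D_xu\in C^{0,\be}(B_1')$ for $\be=\beta_0$.

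The main technical points are: (i) the careful triangle-inequality step relating the ``$v$-Campanato energy'' at scale $R$ back to $\psi(R)$ using the defect $\int_{B_R}|\D(u-v)|^2|y|^a$; and (ii) the coupling of parameters, where $\e>0$ must be chosen small enough (depending on $\al$) to guarantee both the strict inequality $\gamma>\be$ required by Lemma~\ref{lem:HL} and a positive Hölder exponent $\beta_0$. All other steps are direct applications of results established earlier in the paper.
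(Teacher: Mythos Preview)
Your proposal is correct and follows essentially the same approach as the paper: both compare $u$ with its $L_a$-harmonic replacement $v$, bound the defect $\int_{B_R}|\nabla(u-v)|^2|y|^a$ by $R^\alpha$ times the almost-Lipschitz estimate \eqref{anz-holder-1}, transfer the Campanato decay from $v$ (Lemma~\ref{anz-grad-v-holder}) to $u$ via triangle/Jensen inequalities, and close with Lemma~\ref{lem:HL} and the Morrey--Campanato embedding. The only difference is cosmetic parameterization: the paper fixes $\beta\in(0,\alpha/2)$ and sets $\sigma=1+\beta-\alpha/2$ in \eqref{anz-holder-1}, whereas you introduce a free loss $\e$ and then define $2\beta_0=\alpha-\e$; these are equivalent. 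One small caveat: your phrase ``choose $\e>0$ so small that $\alpha-\e<2$'' is slightly misleading when $\alpha\ge 2$, but since $r^\alpha\le r^{\alpha'}$ for any $0<\alpha'\le\alpha$ on $(0,1)$, one may always reduce to $\alpha<2$ without loss of generality.
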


\begin{proof}
  Let $K\Subset B_1'$ be a ball and take
  $0<\delta<\dist(K, \pa B_1')$. Let $B_R'(x_0)\Subset B_1'$ with
  $0<R<\delta$, for $x_0\in K$. For simplicity write $x_0=0$, and let
  $v$ be the $L_a$-harmonic function in $B_R$ with $v=u$ on $\pa B_R$.
  Then, by Jensen's inequality we have
$$
\int_{B_{\rho}}|\mean{\D_xu}_{\rho}-\mean{\D_xv}_{\rho}|^2|y|^a \le
\int_{B_{\rho}}|\D_xu-\D_xv|^2|y|^a,
$$
and hence
\begin{align*}
  \int_{B_{\rho}}|\D_xu-\mean{\D_xu}_{\rho}|^2|y|^a
  &\leq
    3\int_{B_{\rho}}|\D_xv-\mean{\D_xv}_{\rho}|^2|y|^a+3\int_{B_{\rho}}|\D_xu-\D_xv|^2|y|^a\\
  &\qquad +3\int_{B_{\rho}}|\mean{\D_xu}_{\rho}-\mean{\D_xv}_{\rho}|^2|y|^a\\
  &\leq 3\int_{B_{\rho}}|\D_xv-\mean{\D_xv}_{\rho}|^2|y|^a+6\int_{B_{\rho}}|\D_xu-\D_xv|^2|y|^a.
\end{align*}
Similarly,
$$
\int_{B_R}|\D_xv-\mean{\D_xv}_R|^2|y|^a \le
3\int_{B_{R}}|\D_xu-\mean{\D_xu}_R|^2|y|^a+6\int_{B_{R}}|\D_xu-\D_xv|^2|y|^a.
$$
Next let $\be\in(0,\alpha/2)$. Then using the estimate
\eqref{anz-holder-1} in the proof of Theorem~\ref{thm:anz-holder} with
$\si=1+\be-\frac{\al}2$, we have
\begin{align*}
  \int_{B_R}|\D u-\D v|^2|y|^a
  &= \int_{B_R}|\D u|^2|y|^a-\int_{B_R}|\D v|^2|y|^a \\ 
  &\le R^{\al}\int_{B_R}|\D u|^2|y|^a\\
  &\le C\|\D u\|^2_{L^2(B_1, |y|^a)}R^{n+1+a+2\be}.
\end{align*}
Then, with the help of Lemma~\ref{anz-grad-v-holder}, we have that for
$\rho< R$
\begin{align*}
  &\int_{B_{\rho}}|\D_xu-\mean{\D_xu}_{\rho}|^2|y|^a\\
  &\qquad \le C\int_{B_{\rho}}|\D_xv-\mean{\D_xv}_{\rho}|^2|y|^a+C\int_{B_{\rho}}|\D_xu-\D_xv|^2|y|^a\\
  &\qquad\le C\left(\frac{\rho}R\right)^{n+a+3}\int_{B_R}|\D_xv-\mean{\D_xv}_R|^2|y|^a+C\int_{B_{\rho}}|\D_xu-\D_xv|^2|y|^a\\
  &\qquad\le C\left(\frac{\rho}R\right)^{n+a+3}\int_{B_R}|\D_xu-\mean{\D_xu}_R|^2|y|^a+C\int_{B_R}|\D_xu-\D_xv|^2|y|^a\\
  &\qquad \le C\left(\frac{\rho}R\right)^{n+a+3}\int_{B_R}|\D_xu-\mean{\D_xu}_R|^2|y|^a+C\|\D u\|^2_{L^2(B_1, |y|^a)}R^{n+1+a+2\be}.
\end{align*}
Hence, by Lemma~\ref{lem:HL}, we obtain that for $\rho< R$
\begin{multline*}
  \int_{B_{\rho}}|\D_xu-\mean{\D_xu}_{\rho}|^2|y|^a\\
  \le
  C\left[\left(\frac{\rho}R\right)^{n+1+a+2\be}\int_{B_R}|\D_xu-\mean{\D_xu}_R|^2|y|^a+\|\D
    u\|^2_{L^2(B_1, |y|^a)}\rho^{n+1+a+2\be}\right].
\end{multline*}
Taking $R\nearrow \delta$, we have
\begin{align*}
  \int_{B_{\rho}}|\D_xu-\mean{\D_xu}_{\rho}|^2|y|^a &\le C_{n, a, \al, \be, K}\|\D u\|^2_{L^2(B_1, |y|^a)}\rho^{n+1+a+2\be}.
\end{align*}
Now, a similar estimate holds for any $x_0\in K$. Fixing $\beta$ and
applying Theorem~\ref{thm:morr-camp-space}, we have
\[\|\D_xu\|_{C^{0, \be}(K)}\le C_{n, a, \al, K}\|u\|_{W^{1,2}(B_1,
    |y|^a)}.\qedhere
\]
\end{proof}

\begin{remark}\label{rem:trace-weak}
  From the assumption for almost minimizers that the
  Caffarelli-Silvestre extension $u\in W^{1,2}_\loc$ we know only that
  $\nabla_x u\in L^2_\loc$, which is not sufficient to deduce the
  existence of the trace of $\nabla_x u$ on $B_1'$. However, in the
  proof of Theorem~\ref{anz-grad-u-holder} we showed that $\nabla_x u$
  is in a Morrey-Campanato space, which implies the existence of the
  trace as the limit of averages
$$
T(\D_xu)(x_0)=\lim_{r\ra 0+}\mean{\nabla_x u}_{x_0,r}.
$$
It is not hard to see that $T(\D_x u)$ is the distributional
derivative $\nabla_x u$ on $B_1'$. Indeed, if
$\eta\in C^\infty_0(B_1')$, then extending it to $\R^{n+1}$ by
$\eta(x,y)=\eta(x)$, we have
\begin{align*}
  \int_{B_1'}T(\partial_{x_i} u)\eta
  &=\lim_{r\to 0+}\int_{B_1'}\mean{\partial_{x_i}u}_{x,r}\eta=\lim_{r\to 0+}\int_{B_1'}\partial_{x_i}u \mean{\eta}_{x,r}\\
  &=\lim_{r\to 0+}-\int_{B_1'}u \mean{\partial_{x_i}\eta}_{x,r}=-\int_{B_1'} u\partial_{x_i}\eta.
\end{align*}
\end{remark}

\begin{theorem}\label{anz-u_y-holder} Let $u$ be an almost
  $s$-fractional harmonic function in $B_1'$ for $0<s<1/2$ or $s=1/2$
  and a gauge function $\omega(r)=r^\alpha$ for some $\alpha>0$. Then
  $u$ is actually $s$-fractional harmonic in $B_1'$.
\end{theorem}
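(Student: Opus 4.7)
The plan is to combine the almost-minimizing condition with the growth estimate from the proof of Theorem~\ref{thm:anz-holder} to derive a pointwise bound on $(-\Delta_x)^s u$ that forces it to vanish. For $x_0 \in B'_1$ and small $r > 0$, let $v_r$ denote the $L_a$-harmonic replacement of $u$ in $B_r(x_0)$, i.e., the minimizer of $\int_{B_r(x_0)}|\nabla v|^2|y|^a$ among $v \in u + W^{1,2}_0(B_r(x_0), |y|^a)$. The first step is to establish the identity
\[
\int_{B_r(x_0)} |\nabla(u - v_r)|^2 |y|^a = \frac{2}{C_{n,a}} \int_{B'_r(x_0)} (u - v_r)\, (-\Delta_x)^s u,
\]
obtained by testing the weak formulations of $u$ and $v_r$ against the admissible perturbation $\phi = u - v_r \in W^{1,2}_0(B_r(x_0), |y|^a)$; the right-hand side appears via the distributional relation $-L_a u = (2/C_{n,a})\,(-\Delta_x)^s u\cdot\delta_{\{y=0\}}$, which follows from integration by parts in $\R^{n+1}_\pm$ together with the Caffarelli--Silvestre formula and the even $y$-symmetry of $u$.

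Next, I will bound both sides. On the left, almost minimality and the energy identity give $\int_{B_r}|\nabla(u - v_r)|^2|y|^a \le \omega(r) \int_{B_r}|\nabla u|^2|y|^a$, and then the growth estimate \eqref{anz-holder-1} yields $\le C\,\omega(r)\,r^{n-1+a+2\sigma}$ for any $\sigma < 1$. On the right, I will represent $u - v_r$ through the positive Green's function $G$ for $L_a$ in $B_r(x_0)$ with zero Dirichlet data: since $-L_a(u - v_r) = (2/C_{n,a})(-\Delta_x)^s u \cdot \delta_{\{y=0\}}$, the right-hand side of the key identity becomes a non-negative quadratic form in $(-\Delta_x)^s u$. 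Using the natural scaling $\iint_{B'_r \times B'_r} G((x,0),(z,0))\,dx\,dz \sim r^{n+2s}$ together with Lebesgue differentiation at a continuity point $x_0$ of $(-\Delta_x)^s u$, this quadratic form admits the lower bound $\ge c\, r^{n+2s}\, |(-\Delta_x)^s u(x_0)|^2$ for all sufficiently small $r$.

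Combining the two bounds and using $a = 1 - 2s$, I obtain $|(-\Delta_x)^s u(x_0)|^2 \le C\,\omega(r)\,r^{2\sigma - 4s}$. When $0 < s < 1/2$, any $\sigma \in (2s, 1)$ makes the exponent positive, so $\omega(r) \to 0$ forces $(-\Delta_x)^s u(x_0) = 0$. When $s = 1/2$ with $\omega(r) = r^\alpha$, the bound becomes $C r^{\alpha + 2\sigma - 2}$, which vanishes as $r \to 0$ once $\sigma > 1 - \alpha/2$. The pointwise definition and continuity of $(-\Delta_x)^s u$ needed to invoke Lebesgue differentiation follow from $u \in C^{0,\sigma}$ with $\sigma > 2s$ (Theorem~\ref{thm:anz-holder}) in the first case, and from $u \in C^{1,\beta}$ (Theorem~\ref{anz-grad-u-holder}) in the second. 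The main technical hurdle I anticipate is establishing the coercive lower bound on the Green's-function quadratic form with the correct scaling, which rests on the positivity of $G$ and its on-diagonal asymptotics $G((x,0),(z,0)) \sim |x-z|^{2s-n}$ near the thin space.
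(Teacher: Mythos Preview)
Your approach is correct and genuinely different from the paper's. The paper never touches the Green's function; instead it works entirely with the extension, exploiting the sharper growth bound $\int_{B_\rho}|v_y|^2|y|^a\le(\rho/R)^{n+3+a}\int_{B_R}|v_y|^2|y|^a$ from Lemma~\ref{anz-even v-mvp} (note the exponent $n+3+a$, not $n+1+a$). Combined with the almost-minimizing inequality and \eqref{anz-holder-1}, this yields a decay of $\int_{B_\rho}||y|^a u_y|^2|y|^{-a}$ strictly faster than $\rho^{n+1-a}$ (when $a>0$, or when $a=0$ and $\omega(r)=r^\alpha$). Lemma~\ref{lem:HL} and the Morrey--Campanato embedding of Theorem~\ref{thm:morr-camp-space} then force the trace $T(y|y|^{a-1}u_y)=0$, and a final mollification step identifies this trace with $(-\Delta_x)^s u$.

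Your route trades the improved $v_y$-decay for the coercivity of the bilinear form $\iint G\,f\otimes f$, and trades the mollification endgame for an upfront appeal to the regularity of $u$ on the thin space (so that $(-\Delta_x)^s u$ is classically defined and continuous). This is arguably more transparent, since it produces the pointwise conclusion $(-\Delta_x)^s u(x_0)=0$ directly. The price is the Green's function machinery: you need existence, positivity, and scaling of the Dirichlet Green's function for $L_a$ in $B_r$, all of which are available from the $A_2$-weight theory (e.g.\ Fabes--Kenig--Serapioni), and you need the positive constant $c_0=\iint_{B_1'\times B_1'}G_1((x,0),(z,0))\,dx\,dz>0$, which is immediate from $G_1>0$. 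One small remark: what you call ``Lebesgue differentiation'' is really just continuity of $f=(-\Delta_x)^s u$ at $x_0$; writing $f=f(x_0)+g$ with $\|g\|_{L^\infty(B_r')}\to 0$, the positivity of $G$ gives $|B(1,g)|\le\|g\|_\infty Q(1)$ and the positive semidefiniteness of $Q$ (which follows from your energy identity itself) gives $Q(g)\ge 0$, so $Q(f)\ge \tfrac12 f(x_0)^2 Q(1)$ once $\|g\|_\infty<|f(x_0)|/4$. That is the coercive lower bound you need, and it is elementary once stated this way.
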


\begin{proof} We argue as in the proof
  Theorem~\ref{thm:anz-holder}. Let $K$, $\delta$, $R$, $v$ be as in
  the proof of that theorem. Then, by Lemma~\ref{anz-even v-mvp}, for
  $0<\rho<R$
$$
\int_{B_{\rho}}|v_y|^2|y|^a\le
\left(\frac{\rho}R\right)^{n+3+a}\int_{B_R}|v_y|^2|y|^a.
$$
Thus, for any $0<\si<1$, we have
\begin{align*}
  \int_{B_{\rho}}| |y|^au_y|^2|y|^{-a}
  &\le 2\int_{B_{\rho}}|v_y|^2|y|^a+2\int_{B_{\rho}}|u_y-v_y|^2|y|^a\\
  &\le 2\left(\frac{\rho}R\right)^{n+3+a}\int_{B_R}|v_y|^2|y|^a+2\int_{B_{\rho}}|u_y-v_y|^2|y|^a\\
  &\le 4\left(\frac{\rho}R\right)^{n+3+a}\int_{B_R}|u_y|^2|y|^a+6\int_{B_R}|u_y-v_y|^2|y|^a\\
  &\le 4\left(\frac{\rho}R\right)^{n+3+a}\int_{B_R}| |y|^au_y|^2|y|^{-a}+6\omega(R)\int_{B_R}|\D u|^2|y|^a\\
  &\le 4\left(\frac{\rho}R\right)^{n+3+a}\int_{B_R}|
    |y|^au_y|^2|y|^{-a}\\
  &\qquad +C_{n, a, \si, \delta}\omega(R)\|\D u\|^2_{L^2(B_1, |y|^a)}R^{n-1+a+2\si},
\end{align*}
where we used \eqref{anz-holder-1} in the last inequality.

Consider now the two cases in statement of the theorem.

\medskip\noindent \emph{Case 1}. $0<s<1/2$ (or $a>0$). In this case by
Lemma~\ref{lem:HL},
\begin{align*}
  &\int_{B_{\rho}}| |y|^au_y|^2|y|^{-a}\\
  &\qquad\le C\left[\left(\frac{\rho}R\right)^{n-1+a+2\si}\int_{B_R}| |y|^au_y|^2|y|^{-a}+\omega(\delta)\|\D u\|^2_{L^2(B_1, |y|^a)}\rho^{n-1+a+2\si}\right]\\
  &\qquad\le C\|\D u\|^2_{L^2(B_1, |y|^a)}\rho^{n+1-a+(-2+2a+2\si)}. 
\end{align*}
Now we take $\si=1-a/2\in (0, 1)$ to have $-2+2a+2\si=a>0$. Varying
the center, we have a similar bound at every $x\in K$. Then, by
Theorem~\ref{thm:morr-camp-space}, we obtain that the limit of the
averages $T(y|y|^{a-1} u_y)=0$ on $B_1'$. This implies that
$(-\Delta_x)^su=0$ on $B_1'$. Indeed, arguing as in
Remark~\ref{rem:trace-weak}, by considering the mollifications $u_\e$
in $x$-variable, we note that
$$
\int_{B_{\rho}}| |y|^a(u_\e)_y|^2|y|^{-a}\leq C \rho^{n+1-a+a}
$$
which implies that $T(y|y|^{a-1}(u_\e)_y)=0$ on $K\Subset B_1'$. On
the other hand, $u_\e\in C^2\cap\mathcal{L}_s(\R^n)$, which implies
that $y|y|^{a-1}(u_\e)_y$ is continuous up to $y=0$, since we can
explicitly write, for $y>0$, the symmetrized formula
$$
y^a(u_\e)_y(x,y)=\int_{\R^n}\frac{u_\e(x+z)+u_\e(x-z)-2u_\e(x)}{|z|^2}|z|^2y^a\partial_y
P(z,y)dz
$$
with locally integrable kernel
$ |z|^2|y^a\partial_y P(z,y)|\leq C/|z|^{n-1-a}.  $ Hence, we obtain
that $(-\Delta_x)^su_\e=\partial_y^a u_\e=0$ on the ball
$K\Subset B_1'$. Then, passing to the limit as $\e\to 0$, this implies
that $(-\Delta_x)^su=0$ in $B_1'$.

\medskip\noindent \emph{Case 2.} $s=1/2$ (or $a=0$) and
$\omega(r)=r^\alpha$.  In this case, we have a bound
\begin{align*}
  \int_{B_{\rho}}|u_y|^2
  \le 4\left(\frac{\rho}R\right)^{n+3}\int_{B_R}|u_y|^2+C\|\D u\|^2_{L^2(B_1)}R^{n-1+2\si+\alpha},
\end{align*}
Them, by Lemma~\ref{lem:HL}, we have
\begin{align*}
  \int_{B_{\rho}}|u_y|^2&\le C\left[\left(\frac{\rho}R\right)^{n-1+2\si+\alpha}\int_{B_R}|u_y|^2+\|\D u\|^2_{L^2(B_1)}\rho^{n-1+2\si+\alpha}\right]\\*
                        &\le C\|\D u\|^2_{L^2(B_1)}\rho^{n+1+(\al-2+2\si)}. 
\end{align*}
Taking $1-\alpha/4<\sigma<1$, we can guarantee that
$\al-2+2\si>\alpha/2>0$, which implies that $T(y|y|^{-1}u_y)=0$ on
$B_1'$. Then, arguing as at the end of Case 1, we conclude that
$(-\Delta_x)^{1/2}u=0$ in $B_{1}'$.
\end{proof}

We finish this section with formal proof of Theorem~\ref{mthm:I}.

\begin{proof}[Proof of Theorem~\ref{mthm:I}] Parts (1), (2), and (3)
  are proved in Theorems~\ref{thm:anz-holder},
  \ref{anz-grad-u-holder}, and \ref{anz-u_y-holder}, respectively.
\end{proof}


\section{Almost minimizers for $s$-fractional obstacle problem}
\label{sec:alm-min-s-frac-obst}

In this section we investigate the regularity of almost minimizers for
the $s$-fractional obstacle problem with zero obstacle and give a
proof of Theorem~\ref{mthm:II}. All results in this section are proved
under the assumption $1/2\leq s<1$, or $-1<a\leq 0$.

\begin{theorem}[Almost Lipschitz regularity]

  \label{thm:sig-u-holder}
  Let $u$ be an almost minimizer for $s$-fractional obstacle problem
  with zero obstacle in $B_1'$, for $1/2\leq s<1$.  Then
  $u\in C^{0, \si}(B'_1)$ for any $0<\si<1$ with
$$
\|u\|_{C^{0, \si}(K)} \le C_{n, a,\omega, \si, K}\|u\|_{W^{1,2}(B_1,
  |y|^a)}
$$ for any $K\Subset B_1'$.
\end{theorem}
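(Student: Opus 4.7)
The plan is to mimic the proof of Theorem~\ref{thm:anz-holder}, but with the unconstrained $L_a$-harmonic comparison replaced by the solution of the Signorini problem, and using Lemma~\ref{sig-v-mon} (the decay estimate for $\int|\nabla v|^2|y|^a$) in place of Lemma~\ref{anz-even v-mvp}. The hypothesis $1/2\le s<1$ (i.e.\ $a\le 0$) is exactly what is needed so that the Signorini growth estimate delivers the full power $(\rho/R)^{n+1+a}$ rather than the weaker mixed bound.

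Fix a compact set $K\Subset B_1'$, choose $\delta=\delta_{n,a,\omega,\sigma,K}>0$ with $\delta<\dist(K,\partial B_1')$ and $\omega(\delta)\le \varepsilon$, where $\varepsilon$ is supplied by Lemma~\ref{lem:HL} for the parameters $(2, n+1+a, n-1+a+2\sigma)$. Pick $x_0\in K$, assume WLOG $x_0=0$, take $0<R<\delta$ and let $v$ be the minimizer of $\int_{B_R}|\nabla w|^2|y|^a$ over $\mathfrak{K}_{0,u}(B_R,|y|^a)$, so $v$ solves the Signorini problem for $L_a$ in $B_R$ and is even in $y$ (since the obstacle is symmetric). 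Since $u\in\mathfrak{K}_{0,u}$, the variational inequality for $v$ gives $\int_{B_R}\nabla v\cdot\nabla(u-v)|y|^a\ge 0$, hence by expanding $|\nabla u|^2=|\nabla(u-v)|^2+2\nabla(u-v)\cdot\nabla v+|\nabla v|^2$,
\begin{equation*}
  \int_{B_R}|\nabla(u-v)|^2|y|^a\le \int_{B_R}|\nabla u|^2|y|^a-\int_{B_R}|\nabla v|^2|y|^a\le \omega(R)\int_{B_R}|\nabla v|^2|y|^a,
\end{equation*}
where the last inequality uses the almost-minimizing property~\eqref{eq:almost-min-frac-obst}. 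Since $u\in\mathfrak{K}_{0,u}$, minimality of $v$ also gives $\int_{B_R}|\nabla v|^2|y|^a\le \int_{B_R}|\nabla u|^2|y|^a$.

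Apply Lemma~\ref{sig-v-mon} (this is where $a\le 0$ is essential) and the triangle inequality: for $0<\rho<R$,
\begin{equation*}
  \int_{B_\rho}|\nabla u|^2|y|^a\le 2\int_{B_\rho}|\nabla v|^2|y|^a+2\int_{B_\rho}|\nabla(u-v)|^2|y|^a\le 2\Bigl[\Bigl(\tfrac{\rho}{R}\Bigr)^{n+1+a}+\omega(R)\Bigr]\int_{B_R}|\nabla u|^2|y|^a.
\end{equation*}
Combined with the choice $\omega(R)\le \varepsilon$, Lemma~\ref{lem:HL} upgrades this to the Campanato-type decay
\begin{equation*}
  \int_{B_\rho}|\nabla u|^2|y|^a\le C_{n,a,\sigma,\delta}\Bigl(\tfrac{\rho}{R}\Bigr)^{n-1+a+2\sigma}\int_{B_R}|\nabla u|^2|y|^a,
\end{equation*}
for any $0<\sigma<1$. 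Sending $R\nearrow \delta$ and invoking the weighted Poincar\'e inequality~\cite{FabKenSer82}*{Theorem~(1.5)} yields
\begin{equation*}
  \int_{B_\rho}|u-\langle u\rangle_\rho|^2|y|^a\le C_{n,a,\omega,\sigma,K}\|\nabla u\|_{L^2(B_1,|y|^a)}^2\,\rho^{n+1+a+2\sigma}.
\end{equation*}
The analogous bound holds with $B_\rho$ replaced by $B_\rho(x_0)$ for every $x_0\in K$, so Theorem~\ref{thm:morr-camp-space} gives $u\in C^{0,\sigma}(K)$ with the stated estimate.

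The one point that requires genuine care (and which is the main obstacle) is replacing the Euler--Lagrange identity by the variational inequality while still producing an $\omega(R)$-small $L^2$ control on $\nabla(u-v)$; the computation above handles it, but it is the spot where the proof really differs from the free case of Theorem~\ref{thm:anz-holder}. Everything else is a clean adaptation of that argument, with Lemma~\ref{sig-v-mon} in place of Lemma~\ref{anz-even v-mvp}, and is the reason the statement is restricted to $s\ge 1/2$.
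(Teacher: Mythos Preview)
Your proof is correct and follows essentially the same approach as the paper's own proof: replace the $L_a$-harmonic competitor by the Signorini minimizer, use the variational inequality $\int_{B_R}\nabla v\cdot\nabla(u-v)|y|^a\ge 0$ to control $\int_{B_R}|\nabla(u-v)|^2|y|^a$ by $\omega(R)\int_{B_R}|\nabla v|^2|y|^a$, invoke the decay estimate~\eqref{sig-v-mon} (valid because $a\le 0$), and conclude via Lemma~\ref{lem:HL}, the weighted Poincar\'e inequality, and Theorem~\ref{thm:morr-camp-space}. The only cosmetic discrepancy is that \eqref{sig-v-mon} is an equation label in the paper rather than a lemma label.
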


\begin{proof}
  Let $K\Subset B_1'$ with $0\in K$. Take
  $\delta=\delta_{n, a, \omega, \si, K}>0$ such that
  $\delta<\dist(K, \pa B_1')$ and $\omega(\delta)\le \e$, where
  $\e=\e_{2, n+1+a, n-1+a+2\si}$ as in Lemma~\ref{lem:HL}. For
  $0<R<\delta$, let $v$ be the minimizer of
$$
\int_{B_R}|\D v|^2|y|^a
$$
on $\mathfrak{K}_{0,u}(B_R,|y|^a)$. Then $v$ satisfies the variational
inequality
$$
\int_{B_R}\D v\D(w-v)|y|^a\geq 0
$$
for any $w\in \mathfrak{K}_{0,u}(B_R,|y|^a)$. Particularly, taking
$w=u$, we have
$$
\int_{B_R}\D v\D(u-v)|y|^a\geq 0.
$$
As a consequence,
\begin{align*}
  \int_{B_R}|\D(u-v)|^2|y|^a&= \int_{B_R}|\D u|^2|y|^a-\int_{B_R}|\D v|^2|y|^a+2\int_{B_R}|y|^a\D v\cdot \D(v-u)\\
                            &\le \omega(R)\int_{B_R}|\D v|^2|y|^a.
\end{align*}
Next, we use \eqref{sig-v-mon} to derive a similar estimate for
$u$. We have,
\begin{align*}
  \int_{B_{\rho}}|\D u|^2|y|^a &\le 2\int_{B_{\rho}}|\D v|^2|y|^a+2\int_{B_{\rho}}|\D(u-v)|^2|y|^a\\
                               &\le 2\left(\frac{\rho}R\right)^{n+1+a}\int_{B_R}|\D v|^2|y|^a+2\omega(R)\int_{B_R}|\D v|^2|y|^a\\
                               &\le 2\left[\left(\frac{\rho}R\right)^{n+1+a}+\e\right]\int_{B_R}|\D u|^2|y|^a.
\end{align*}
Hence, by Lemma~\ref{lem:HL},
\begin{align*}
  \int_{B_{\rho}}|\D u|^2|y|^a
  &\le C_{n, a, \si}\left(\frac{\rho}R\right)^{n-1+a+2\si}\int_{B_R}|\D u|^2|y|^a.
\end{align*}
As we have seen in Theorem~\ref{thm:anz-holder}, this implies
\begin{align}\label{sig-u-holder-1}
  \int_{B_{\rho}}|\D u|^2|y|^a &\le C_{n, a, \si, \delta}\|\D u\|^2_{L^2(B_1, |y|^a)}\rho^{n-1+a+2\si}\\
  \intertext{then}\notag
  \int_{B_{\rho}}|u-\mean{u}_{\rho}|^2|y|^a &\le C_{n, a, \si,
                                              \delta}\|\D u\|^2_{L^2(B_1, |y|^a)}\rho^{n+1+a+2\si}
\end{align}
and ultimately
\[
  \|u\|_{C^{0, \si}(K)} \le C_{n, a,\omega, \si, K}\|u\|_{W^{1,2}(B_1,
    |y|^a)}.\qedhere
\]
\end{proof}

\begin{theorem}[$C^{1,\beta}$ regularity]
  \label{sig-grad-u-holder}
  Let $u$ be an almost minimizer for the $s$-fractional obstacle
  problem with zero obstacle in $B_1'$, $1/2\leq s<1$, and a gauge
  function $\omega(r)=r^\alpha$. Then $\D_xu\in C^{0, \be}(B_1')$ for
  $\be<\frac{\al s}{8(n+1+a+\al/2)}$ and for any $K\Subset B_1'$ there
  holds
$$
\|\D_xu\|_{C^{0, \be}(K)} \le C_{n, a, \al, \be, K}\|u\|_{W^{1,
    2}(B_1, |y|^a)}.
$$
\end{theorem}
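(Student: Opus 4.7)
The plan is to adapt the proof of Theorem~\ref{anz-grad-u-holder} to the obstacle setting, using Corollary~\ref{sig-grad-v-est3} (the Signorini analogue of Lemma~\ref{anz-grad-v-holder}) at the key step. I would fix a ball $K\Subset B_1'$ and a point $x_0\in K$; after translation assume $x_0=0$ and pick $\delta<\dist(K,\pa B_1')$. For each $0<R<\delta$, let $v$ denote the minimizer of $\int_{B_R}|\D w|^2|y|^a$ over $\mathfrak{K}_{0,u}(B_R,|y|^a)$. Arguing as in the proof of Theorem~\ref{thm:sig-u-holder}, the variational inequality for $v$ together with the almost minimizing property of $u$ gives
\[
\int_{B_R}|\D(u-v)|^2|y|^a \le \omega(R)\int_{B_R}|\D v|^2|y|^a \le R^{\al}\int_{B_R}|\D u|^2|y|^a,
\]
and this combined with the growth estimate~\eqref{sig-u-holder-1} applied with $\si=1-\al/4$ yields the comparison bound $\int_{B_R}|\D(u-v)|^2|y|^a\le CR^{n+1+a+\al/2}$.

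Next I would apply Corollary~\ref{sig-grad-v-est3} to $v$: for $0<\rho<S<(3/16)R$,
\[
\int_{B_\rho}|\D_xv-\mean{\D_xv}_\rho|^2|y|^a \le C_1\Bigl(\frac{\rho}{S}\Bigr)^{n+a+3}\int_{B_S}|\D_xv-\mean{\D_xv}_S|^2|y|^a + C_2\mean{v^2}_R\frac{S^{n+2}}{R^{2+2s}}.
\]
Passing from $v$ to $u$ by the triangle inequality on both sides (exactly as in Theorem~\ref{anz-grad-u-holder}) and using $\mean{v^2}_R\le\|u\|_{L^{\iy}}^2$, which is finite by Theorem~\ref{thm:sig-u-holder}, I would arrive at
\[
\int_{B_\rho}|\D_xu-\mean{\D_xu}_\rho|^2|y|^a \le C\Bigl(\frac{\rho}{S}\Bigr)^{n+a+3}\int_{B_S}|\D_xu-\mean{\D_xu}_S|^2|y|^a + CR^{n+1+a+\al/2} + C\frac{S^{n+2}}{R^{2+2s}}.
\]

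The final step is a careful balancing of the two error terms, which compete: enlarging $R$ worsens the comparison error $R^{n+1+a+\al/2}$ while reducing the Signorini error $S^{n+2}/R^{2+2s}$, and vice versa. Tuning $R=R(\rho)$ as a specific power of $\rho$ and $S$ as an appropriate fraction of $R$ balances both errors against $\rho^{n+1+a+2\be}$ for any $\be$ in the claimed range $\be<\frac{\al s}{8(n+1+a+\al/2)}$. Once cast in the form required by Lemma~\ref{lem:HL}, that lemma delivers the growth estimate
\[
\int_{B_\rho}|\D_xu-\mean{\D_xu}_\rho|^2|y|^a \le C_{n,a,\al,\be,K}\,\|u\|^2_{W^{1,2}(B_1,|y|^a)}\,\rho^{n+1+a+2\be},
\]
uniformly over $x_0\in K$. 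The Morrey--Campanato embedding in Theorem~\ref{thm:morr-camp-space} then yields $\D_xu\in C^{0,\be}(K)$ with the stated norm bound.

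The main obstacle is this balancing of the two errors. The extra term $\mean{v^2}_R\frac{S^{n+2}}{R^{2+2s}}$ in Corollary~\ref{sig-grad-v-est3}, absent in the harmonic analogue Lemma~\ref{anz-grad-v-holder}, reflects the $C^{1,s}$ optimal regularity of Signorini solutions at free-boundary points and is responsible for the substantially more restrictive range of admissible Hölder exponents in comparison with Theorem~\ref{anz-grad-u-holder}.
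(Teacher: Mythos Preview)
Your balancing step does not close. The crude bound $\mean{v^2}_R\le\|u\|_{L^{\iy}}^2$ is correct, but since $\|u\|_{L^\iy}$ does not decay as $R\to 0$ (think of a center $x_0$ with $u(x_0)\neq 0$), the resulting Signorini error $C\,S^{n+2}/R^{2+2s}$ is too large to balance against the comparison error $CR^{n+1+a+\al/2}$. Concretely, setting $R=S^\mu$ with $0<\mu<1$ and asking both errors to be $\le CS^{n+1+a+2\be}$ forces
\[
\frac{n+1+a+2\be}{\,n+1+a+\al/2\,}\le \mu \le \frac{s-\be}{1+s},
\]
and already at $\be=0$ the right inequality gives $\mu\le s/(1+s)<1$ while the left gives $\mu\ge (n+1+a)/(n+1+a+\al/2)$; these are compatible only when $s\al/2\ge n+1+a$, i.e.\ only for very large $\al$. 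For typical (small) $\al$ no positive $\be$ is attainable this way, so Lemma~\ref{lem:HL} cannot be invoked.

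The paper avoids this by two additional ideas. First, the Signorini comparison $v$ is taken on a \emph{larger} ball $B_{\olR}$ with $\olR=R^{1-\e}$, so that in Corollary~\ref{sig-grad-v-est3} the outer radius is $\olR$ and the inner one is $R$. Second---and this is the missing ingredient---the argument splits into two cases according to whether $\mean{v^2}_{\olR}$ is small or large relative to $C_4\|u\|_{W^{1,2}}^2\olR^{2\g}$. In the small case the decay $\mean{v^2}_{\olR}\lesssim \olR^{2\g}$ is exactly what makes the term $\mean{v^2}_{\olR}\,R^{n+2}/\olR^{2+2s}$ controllable (the choice of $\e$ and $\g$ is tuned to produce the exponent $n+1+a+s\e$). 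In the large case one uses the $C^{0,\g}$ regularity of $v$ on $B'_{\olR/2}$ (which follows from \eqref{sig-u-holder-1} transferred to $v$ via \eqref{sig-v-mon}) to deduce that $|v(0)|\gtrsim \olR^\g$ and hence $v>0$ on $B'_{\olR/2}$; then $L_av=0$ in $B_{\olR/2}$ and Lemma~\ref{anz-grad-v-holder}, which carries \emph{no} additive error, replaces Corollary~\ref{sig-grad-v-est3}. This dichotomy is what your proposal is lacking.
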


\begin{proof}
  Let $K$ be a thin ball centered at $0$ such that $K\Subset B_1$. Let
  $\e:=\frac{\al}{4(n+1+a+\al/2)}$ and $\g:=1-\frac{s\e}{2(1-\e)}$. We
  fix $R_0=R_0(n, a, \al, K)>0$ small so that $R_0^{1-\e}\le d/2$,
  where $d:=\dist(K, \pa B_1')$ and
  $R_0<\left(\frac 3{16}\right)^{1/{\e}}$.  Then
  $\widetilde{K}:=\{x\in B_1': \dist(x, K)\le R_0^{1-\e}\}\Subset
  B_1$.  We claim that for $x_0\in K$ and $0<\rho<R<R_0$,
  \begin{equation}\label{eq:sig-holder-want-to-prove}
    \begin{multlined}
      \int_{B_{\rho}(x_0)}|\D_xu-\mean{\D_xu}_{x_0, \rho}|^2|y|^a\\*
      \qquad\le C_{n,
        a}\left(\frac{\rho}R\right)^{n+a+3}\int_{B_R(x_0)}|\D_xu-\mean{\D_xu}_{x_0,
        R}|^2|y|^a\\+C_{n, a, \al, K}\|u\|_{W^{1, 2}(B_1,
        |y|^a)}^2R^{n+1+a+s\e}.
    \end{multlined}
  \end{equation}
  Note that once we have this bound, the proof will follow by the
  application of Lemma~\ref{lem:HL} and
  Theorem~\ref{thm:morr-camp-space}.

  For simplicity we may assume $x_0=0$, and fix $0<R<R_0$. Let
  $\olR:=R^{1-\e}.$ Let $v$ be the minimizer of
  $$ \int_{B_{\olR}}|\D v|^2|y|^a$$ on
  $\mathfrak{K}_{0,u}(B_{\olR},|y|^a)$. Then by (\ref{sig-v-mon}) and
  (\ref{sig-u-holder-1}) with $\si=\g$, for $0<\rho\le
  \olR$ \begin{equation}\label{sig-grad-u-holder-1}\begin{aligned}
      \int_{B_{\rho}}|\D v|^2|y|^a &\le \left(\frac{\rho}{\olR}\right)^{n+1+a}\int_{B_{\olR}}|\D v|^2|y|^a\\
      &\le \left(\frac{\rho}{\olR}\right)^{n+1+a} \int_{B_{\olR}}|\D u|^2|y|^a\\
      &\le C_{n, a, \al, K}\left(\frac{\rho}{\olR}\right)^{n+1+a}\|u\|_{W^{1, 2}(B_1, |y|^a)}^2\olR^{n-1+a+2\g}\\
      &\le C_{n, a, \al, K}\|u\|_{W^{1, 2}(B_1,
        |y|^a)}^2\rho^{n-1+a+2\g}.\end{aligned}
  \end{equation}
  This gives
  \begin{equation}\label{sig-grad-u-holder-2}
    \dashint_{B_{\rho}}|v-v_{\rho}|^2|y|^a \le C_1\|u\|_{W^{1, 2}(B_1,
      |y|^a)}^2\rho^{2\g},\quad C_1=C_{n, a, \al, K}.
  \end{equation}
  Since this estimate holds for any $0<\rho<\olR$, the standard dyadic
  argument gives \begin{equation}\label{sig-grad-u-holder-3}
    |v(0)-\mean{v}_{\olR}|\le C_2\|u\|_{W^{1, 2}(B_1,
      |y|^a)}\olR^{\g},\quad C_2=C_{n, a, \al, K}.
  \end{equation}
  Moreover, using (\ref{sig-v-mon}) and (\ref{sig-u-holder-1}) again,
  we have for any $x_1\in B'_{\olR/2}$, $0<\rho<\olR/2$,
  \begin{equation}\label{sig-grad-u-holder-4}\begin{aligned}
      \int_{B_{\rho}(x_1)}|\D v|^2|y|^a &\le \left(\frac{2\rho}{\olR}\right)^{n+1+a}\int_{B_{\olR/2}(x_1)}|\D v|^2|y|^a\\
      &\le \left(\frac{2\rho}{\olR}\right)^{n+1+a}\int_{B_{\olR}}|\D u|^2|y|^a\\
      &\le C_{n, a, \al, K}\|u\|_{W^{1, 2}(B_1,
        |y|^a)}^2\rho^{n-1+a+2\g},\end{aligned}
  \end{equation}
  which implies
  \begin{equation}\label{sig-grad-u-holder-5} [v]_{C^{0,
        \g}(\,\overline{B'_{R/2}}\,)} \le C_3\|u\|_{W^{1, 2}(B_1,
      |y|^a)},\quad C_3=C_{n, a, \al, K}.
  \end{equation}

  Now we define $$ C_4:=C_1+C_2^2+C_3^2.
$$
Our analysis then distinguishes the following two cases
$$
\mean{v^2}_{\olR}\le 6\, C_4\|u\|_{W^{1, 2}(B_1,
  |y|^a)}^2\olR^{2\g}\qquad\text{or}\qquad \mean{v^2}_{\olR}>
6\,C_4\|u\|_{W^{1, 2}(B_1, |y|^a)}^2\olR^{2\g}.
$$

\medskip\noindent \emph{Case 1.}  Suppose first that
$$\mean{v^2}_{\olR}\le 6C_4\|u\|_{W^{1,
    2}(B_1, |y|^a)}^2\olR^{2\g}.
$$
Note that $R_0<\left(\frac 3{16}\right)^{1/{\e}}$ implies
$R<\frac 3{16}\olR$. Then, using Corollary~\ref{sig-grad-v-est3}, we
see that for $0<\rho<R$,
\begin{align*}
  \int_{B_{\rho}}|\D_xu-\mean{\D_xu}_{\rho}|^2|y|^a&\le 3\int_{B_{\rho}}|\D_xv-\mean{\D_xv}_{\rho}|^2|y|^a+6\int_{B_{\rho}}|\D_xu-\D_xv|^2|y|^a\,dx\\
                                                   &\le C_{n,a}\left(\frac{\rho}R\right)^{n+a+3}\int_{B_R}|\D_xv-\mean{\D_xv}_R|^2|y|^a\\
                                                   &\qquad+C_{n, a}\mean{v^2}_{\olR}\frac{R^{n+2}}{\olR^{2+2s}}+6\int_{B_{\rho}}|\D_xu-\D_xv|^2|y|^a\\
                                                   &\le
                                                     C\left(\frac{\rho}R\right)^{n+a+3} \int_{B_R}|\D_xu-\mean{\D_xu}_R|^2|y|^a\\
                                                   &\qquad+C\mean{v^2}_{\olR}\frac{R^{n+2}}{\olR^{2+2s}}+C\int_{B_R}|\D_xu-\D_xv|^2|y|^a.
\end{align*}
Note that for $\si:=1-\al/4$ \begin{align*}
                               \int_{B_R}|\D_xu-\D_xv|^2|y|^a &\le \int_{B_{\olR}}|\D_xu-\D_xv|^2|y|^a\\
                                                              &\le \olR^{\al}\int_{B_{\olR}}|\D v|^2|y|^a\\
                                                              &\le \olR^{\al}\int_{B_{\olR}}|\D u|^2|y|^a\\
                                                              &\le C_{n, a, \al, K}\olR^{\al}\|u\|_{W^{1, 2}(B_1, |y|^a)}^2\olR^{n-1+a+2\si}\\
                                                              &=
                                                                C\|u\|_{W^{1,
                                                                2}(B_1,
                                                                |y|^a)}^2R^{n+1+a+\al/4}.
                             \end{align*}
                             Moreover by the assumption \begin{align*}
                                                          C\mean{v^2}_{\olR}\frac{R^{n+2}}{\olR^{2+2s}} &\le C_{n, a, \al, K}\|u\|_{W^{1, 2}(B_1, |y|^a)}^2R^{n+2}\olR^{2\g-2-2s}\\
                                                                                                        &=
                                                                                                          C\|u\|_{W^{1,
                                                                                                          2}(B_1,
                                                                                                          |y|^a)}^2R^{n+1+a+s\e}.
                                                        \end{align*}
                                                        Hence, we
                                                        obtain
                                                        \eqref{eq:sig-holder-want-to-prove}
                                                        in this case.

                                                        \medskip\noindent
                                                        \emph{Case
                                                          2}. Now we
                                                        assume
$$
\mean{v^2}_{\olR}> 6\,C_4\|u\|_{W^{1, 2}(B_1, |y|^a)}^2\olR^{2\g}.
$$
Then, by \eqref{sig-grad-u-holder-2} and \eqref{sig-grad-u-holder-3}
we obtain \begin{align*}
            \dashint_{B_{\olR}}|v-v(0)|^2|y|^a &\le 2\dashint_{B_{\olR}}|v-v_{\olR}|^2|y|^a+2\dashint_{B_{\olR}}|v_{\olR}-v(0)|^2|y|^a\\
                                               &\le2C_4\|u\|_{W^{1,
                                                 2}(B_1,
                                                 |y|^a)}^2\olR^{2\g}.
          \end{align*}
          Combining the latter bound and the assumption,
          \begin{align*}
            v(0)^2&= \dashint_{B_{\olR}}|v(0)|^2|y|^a\\*
                  &\ge 1/2\dashint_{B_{\olR}}|v(X)|^2|y|^a-\dashint_{B_{\olR}}|v(X)-v(0)|^2|y|^a\\
                  &\ge C_4\|u\|_{W^{1, 2}(B_1, |y|^a)}^2\olR^{2\g}.
          \end{align*}
          Since $C_4\ge C_3^2$, we have $v> 0$ on $B'_{\olR/2}$ by
          \eqref{sig-grad-u-holder-5}. Thus, $L_av=0$ in $B_{\olR/2}$,
          and by Lemma~\ref{anz-grad-v-holder} we have for
          $0<\rho<R$
          $$ \int_{B_{\rho}}|\D_xv-\mean{\D_xv}_{\rho}|^2|y|^a\le
          \left(\frac{\rho}R\right)^{n+a+3}\int_{B_R}|\D_xv-\mean{\D_xv}_R|^2|y|^a.
$$
Thus,
\begin{align*}
  &\int_{B_{\rho}}|\D_xu-\mean{\D_xu}_{\rho}|^2|y|^a\\*
  &\qquad\le 3\int_{B_{\rho}}|\D_xv-\mean{\D_xv}_{\rho}|^2|y|^a+6\int_{B_{\rho}}|\D_xu-\D_xv|^2|y|^a\\
  &\qquad\le 3\left(\frac{\rho}R\right)^{n+a+3}\int_{B_R}|\D_xv-\mean{\D_xv}_R|^2|y|^a+6\int_{B_{\rho}}|\D_xu-\D_xv|^2|y|^a\\
  &\qquad\le C\left(\frac{\rho}R\right)^{n+a+3}\int_{B_R}|\D_xu-\mean{\D_xu}_R|^2|y|^a+C\int_{B_R}|\D_xu-\D_xv|^2|y|^a\\
  &\qquad\le C\left(\frac{\rho}R\right)^{n+a+3}\int_{B_R}|\D_xu-\mean{\D_xu}_R|^2|y|^a+C\|u\|_{W^{1, 2}(B_1, |y|^a)}^2R^{n+1+a+\al/4}.
\end{align*}
This implies \eqref{eq:sig-holder-want-to-prove} and completes the
proof.
\end{proof}

\begin{proof}[Proof of Theorem~\ref{mthm:II}] Parts (1) and (2) are
  contained in Theorems~\ref{thm:sig-u-holder} and
  \ref{sig-grad-u-holder}, respectively.
\end{proof}

\appendix


\section{Morrey-Campanato-type Space}
\label{sec:morr-camp-space}

\begin{theorem}\label{thm:morr-camp-space}
  Let $u\in L^2(B_1, |y|^a)$ and $M$ be such that
  $\|u\|_{L^2(B_1, |y|^a)}\leq M$ and for some $\si\in (0,1)$
$$
\int_{B_r(x)}|u-\mean{u}_{x, r}|^2|y|^a \le M^2r^{n+1+a+2\si},\quad
\mean{u}_{x, r}=\frac{1}{\om_{n+1+a}r^{n+1+a}}\int_{B_r(x)}u\,|y|^a
$$
for any ball $B_r(x)$ centered at $x=(x, 0)\in B_{1/2}'$ and radius
$0<r<r_0\leq 1/2$. Then for any $x\in B_{1/2}'$ there exists the limit
of averages
$$
Tu(x):=\lim_{r\to 0}\mean{u}_{x,r},
$$
which will also satisfy
$$
\int_{B_{r}(x)}|u-Tu(x)|^2|y|^a\leq C_{n, a, \si} M^2 r^{n+1+a+2\si}.
$$
Moreover, $Tu\in C^{0,\si}(B_{1/2}')$ with
$$
\|Tu\|_{C^{0,\si}(B_{1/2}')}\leq C_{n,a,\si,r_0}M.
$$
\end{theorem}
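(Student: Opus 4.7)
The plan is to run a weighted Campanato-type dyadic iteration, adapted to the current setting where the Campanato hypothesis is assumed only for balls centered on the thin space $B_{1/2}'$. The key observation that makes the classical argument transplant cleanly is that for any $x\in B_{1/2}'$ the weighted volume
$$
\int_{B_r(x)} |y|^a = \om_{n+1+a}\,r^{n+1+a}
$$
is independent of $x$: horizontal translation leaves $|y|^a$ invariant, and the scaling in $r$ is standard. Accordingly, I would split the proof into (i) existence of the trace $Tu(x)$ with its pointwise growth bound at each thin-space point, and (ii) H\"older comparison of $Tu$ at two nearby thin-space points.

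For step (i), fix $x\in B_{1/2}'$ and estimate the difference of consecutive dyadic averages by Jensen's inequality and the Campanato hypothesis:
$$
|\mean{u}_{x,\rho}-\mean{u}_{x,2\rho}|^2
\le \dashint_{B_{\rho}(x)} |u-\mean{u}_{x,2\rho}|^2 |y|^a
\le \frac{C}{\rho^{n+1+a}} \int_{B_{2\rho}(x)} |u-\mean{u}_{x,2\rho}|^2 |y|^a
\le C M^2 \rho^{2\si}.
$$
Summing the resulting geometric series along the scales $\rho_k=2^{-k}r$ for $0<r<r_0$ yields that the family $\{\mean{u}_{x,r}\}_{r}$ is Cauchy, so the limit $Tu(x):=\lim_{r\to 0+}\mean{u}_{x,r}$ exists and satisfies $|\mean{u}_{x,r}-Tu(x)|\le C_\si M\,r^\si$. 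The integral bound $\int_{B_r(x)}|u-Tu(x)|^2|y|^a \le C M^2 r^{n+1+a+2\si}$ then follows from one more triangle inequality.

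For step (ii), pick $x_1,x_2\in B_{1/2}'$ with $\rho:=|x_1-x_2|<r_0/2$. Since $B_\rho(x_2)\subset B_{2\rho}(x_1)$, restricting the Campanato control on the larger ball to the smaller one and exploiting the $x$-independent weighted volume gives
$$
|\mean{u}_{x_1,2\rho}-\mean{u}_{x_2,\rho}|^2
\le \frac{1}{\om_{n+1+a}\rho^{n+1+a}}\int_{B_\rho(x_2)}|u-\mean{u}_{x_1,2\rho}|^2|y|^a
\le C M^2 \rho^{2\si}.
$$
Combining this with the step~(i) bound at $x_1$ (scale $2\rho$) and at $x_2$ (scale $\rho$) via the triangle inequality produces $|Tu(x_1)-Tu(x_2)|\le C M\rho^\si$, i.e., the desired H\"older estimate on scales below $r_0$. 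For the remaining range $|x_1-x_2|\ge r_0$ and for the $L^\infty$ control of $Tu$, I would compare $Tu(x)$ with $\mean{u}_{x,r_0}$ using step~(i), then bound the latter by Cauchy--Schwarz against $\|u\|_{L^2(B_1,|y|^a)}\le M$; the constant there will absorb a factor $r_0^{-(n+1+a)/2}$, explaining the $r_0$-dependence in the final norm bound.

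The one conceptual point worth highlighting, and the only place where the weighted setting enters nontrivially, is the observation that thin-space-centered balls carry the homogeneous $|y|^a$-weighted volume $\om_{n+1+a}r^{n+1+a}$; without this the scale-by-scale comparisons in (i) and (ii) would not close. Modulo that observation the argument is a standard exercise in dyadic telescoping and triangle inequalities, and I do not expect any genuine obstruction.
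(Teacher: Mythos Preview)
Your proposal is correct and follows essentially the same approach as the paper's proof: a dyadic telescoping argument to establish existence of the trace and its pointwise growth bound, followed by a two-center comparison via ball inclusion for the H\"older seminorm, with the $L^\infty$ control handling large distances. The only minor slip is that your step~(ii) covers $|x_1-x_2|<r_0/2$ while your large-distance argument is stated for $|x_1-x_2|\ge r_0$, leaving a gap $[r_0/2,r_0)$; but the latter argument works verbatim once the threshold is lowered to $r_0/2$, so this is trivially repaired.
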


\begin{remark} Note, we can redefine $u(x,0)=Tu(x)$ for any
  $x\in B_{1/2}'$, making $(x,0)$ a Lebesgue point for $u$.
\end{remark}

\begin{proof}
  Let $x, z\in B_{1/2}'$ and $0<\rho<r<r_0$ be such that
  $B_{\rho}(x)\subset B_{r}(z)$.  Then
  \begin{align*}
    |\mean{u}_{x,\rho}-\mean{u}_{z,r}| &\leq\dashint_{B_\rho(x)}|u-\mean{u}_{z, r}||y|^a \\
                                       &\le \left(\frac r\rho\right)^{n+1+a}\dashint_{B_r(z)}|u-\mean{u}_{z, r}| |y|^a\\
                                       &\le \left(\frac r\rho\right)^{n+1+a} \left(\dashint_{B_r(z)}|u-\mean{u}_{z, r}|^2|y|^a\right)^{1/2}\left(\dashint_{B_r(z)}|y|^a\right)^{1/2}\\
                                       &\le C_{n, a}\left(\frac r\rho\right)^{n+1+a}Mr^{\si}.
  \end{align*}
  Now, taking $x=z$ and using a dyadic argument, we can conclude that
$$
|\mean{u}_{x,\rho}-\mean{u}_{x,r}|\leq C_{n,a, \si}M
r^{\si},\quad\text{for any }0<s=\rho<r<r_0.
$$
Indeed, let $k=0,1,2,\ldots$ be such that
$r/2^{k+1}\leq \rho<r/2^{k}$.  Then
\begin{align*}
  |\mean{u}_{x,\rho}-\mean{u}_{x,r}|
  &\leq \sum_{j=1}^k |\mean{u}_{x,r/2^{j-1}}-\mean{u}_{x,r/2^j}|+|\mean{u}_{x,r/2^k}-\mean{u}_{x,\rho}|\\
  &\leq C_{n, a} M \sum_{j=1}^{k+1} (r/2^{j-1})^\si\leq
    C_{n, a,\si}M r^\si.
\end{align*}
This implies that the limit
$$
Tu(x)=\lim_{r\to 0} \mean{u}_{x,r}
$$
exists and
$$
|Tu(x)-\mean{u}_{x,r}|\leq C_{n,a,\si}M r^\si.
$$
Hence, we also have the H\"older integral bound
$$
\int_{B_{r}(x)}|u-Tu(x)|^2|y|^a\leq C_{n,a,\si} M^2 r^{n+1+a+2\si}.
$$
Besides, we have
$$
|Tu(x)|\leq \mean{u}_{x,r_0}+C_{n,a,\si} M r_0^\si\leq
C_{n,a,\si,r_0}M.
$$
It remains to estimate the H\"older seminorm of $Tu$ on
$B_{1/2}'$. Let $x,z\in B_{1/2}'$ and consider two cases.

\medskip\noindent \emph{Case 1.} If $|x-z|<r_0/4$, let
$r=2|x-z|$. Then note that $B_{r/2}(x)\subset B_{r}(z)$ and therefore
we can write
\begin{align*}
  |Tu(x)-Tu(z)|&\leq |Tu(x)-\mean{u}_{x,r/2}|+|Tu(z)-\mean{u}_{z,r}|+|\mean{u}_{x,r/2}-\mean{u}_{z,r}|\\
               &\leq C_{n,a, \si}M r^\si=C_{n,a,\si} M|x-z|^\si.
\end{align*}

\medskip\noindent \emph{Case 2.} If $|x-z|\geq r_0/4$, then
\begin{align*}
  |Tu(x)-Tu(z)|
  &\leq |Tu(x)|+|Tu(z)|\\
  &\leq C_{n,a, \si, r_0} M\\
  &\leq C_{n,a, \si, r_0}M|x-z|^\si. 
\end{align*}
Thus, we conclude
\[
  \|Tu\|_{C^{0,\si}(B_{1/2}')}\leq C_{n,a, \si,r_0}M.\qedhere
\]
\end{proof}


\section{Polynomial expansion for Caffarelli-Silvestre extension}
\label{sec:La-poly-expansion}

Some of the results in Section~\ref{sec:growth-estim-minim} rely on
polynomial expansion theorem for $L_a$-harmonic functions given below.

\begin{theorem}
  \label{thm:even-a-harm} Let $u\in W^{1,2}(B_1,|y|^a)$, $-1<a<1$, be
  a weak solution of the equation $L_au=0$ in $B_1$, even in $y$. Then
  we have the following polynomial expansion:
  $$
  u(x,y)=\sum_{k=0}^\infty p_k(x,y)\quad\text{}
  $$
  locally uniformly in $B_1$, where $p_k(x,y)$ are $L_a$-harmonic
  polynomials, homogeneous of degree $k$ and even in $y$. Moreover,
  the polynomials $p_k$ above are orthogonal in
  $L^2(\partial B_1,|y|^a)$, i.e.,
$$
\int_{\partial B_1}p_k p_m |y|^a=0,\quad k\neq m.
$$
In, particular, $u$ is real analytic in $B_1$.
\end{theorem}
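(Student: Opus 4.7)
The plan is to carry out a spherical-harmonics-type decomposition adapted to the weight $|y|^a$. Writing $X=r\theta$ with $r=|X|$ and $\theta\in S^n$, and setting $\omega=\theta_{n+1}=y/r$, a direct computation shows that $u(X)=r^kY(\theta)$ solves $L_au=0$ if and only if
$$\Delta_a^{S^n}Y:=\frac{1}{|\omega|^a}\operatorname{div}_{S^n}\bigl(|\omega|^a\nabla_{S^n}Y\bigr)=-\mu_kY,\qquad \mu_k=k(k+n-1+a).$$
So the problem reduces to the spectral theory of the weighted spherical operator $-\Delta_a^{S^n}$ on $L^2(S^n,|\omega|^ad\sigma)$, which is self-adjoint and nonnegative via the Dirichlet form $\varphi\mapsto\int_{S^n}|\nabla_{S^n}\varphi|^2|\omega|^a$.

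Since $|\omega|^a$ is a Muckenhoupt $A_2$ weight (as $-1<a<1$), the weighted Rellich theorem gives a compact embedding $H^1(S^n,|\omega|^a)\hookrightarrow L^2(S^n,|\omega|^a)$, so $-\Delta_a^{S^n}$ has compact resolvent, purely discrete spectrum, and a complete orthonormal eigenbasis. Restricting everything to the subspace even in $\omega$ preserves this structure. The key step is to identify this spectrum with $\{\mu_k\}_{k\ge 0}$ and the corresponding eigenspaces with the restrictions to $S^n$ of $L_a$-harmonic homogeneous polynomials of degree $k$, even in $y$. For this I would prove a weighted Fischer-type decomposition: every homogeneous polynomial $Q$ of degree $k$ even in $y$ admits the unique splitting $Q=p_k+|X|^2Q'$ with $p_k$ an $L_a$-harmonic homogeneous polynomial of degree $k$ even in $y$ and $\deg Q'=k-2$. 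By induction in $k$, this reduces to invertibility of the map $Q'\mapsto L_a(|X|^2Q')$ on the finite-dimensional space of degree-$(k-2)$ even polynomials, which follows from a simple dimension count after checking it has trivial kernel (a nontrivial kernel element $Q'$ would yield $|X|^2Q'$ an $L_a$-harmonic polynomial vanishing to order $\ge 2$ at the origin with the wrong homogeneity). Iterating the decomposition writes every even polynomial as a finite sum $\sum_j|X|^{2j}p_{k-2j}$, so polynomial restrictions are dense in $C_{\mathrm{even}}(S^n)$ by Stone--Weierstrass and thus in $L^2_{\mathrm{even}}(S^n,|\omega|^a)$; combined with the orthogonality of eigenspaces of different eigenvalues this forces the $\mathcal{H}_k|_{S^n}$ to exhaust the whole spectrum.

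With the ONB $\{Y_{k,j}\}$ of $L^2_{\mathrm{even}}(S^n,|\omega|^a)$ by polynomial restrictions in hand, I would expand $u(r\theta)=\sum_{k,j}c_{k,j}(r)Y_{k,j}(\theta)$, where $c_{k,j}(r):=\int_{S^n}u(r\theta)Y_{k,j}(\theta)|\omega|^ad\sigma$. Inserting into $L_au=0$ in the weak form gives the Euler-type ODE
$$c_{k,j}''+\tfrac{n+a}{r}c_{k,j}'-\tfrac{\mu_k}{r^2}c_{k,j}=0,$$
whose only linearly independent solutions are $r^k$ and $r^{-(k+n-1+a)}$; the latter is excluded at the origin by $u\in W^{1,2}(B_1,|y|^a)$ (its Dirichlet energy is infinite). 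Hence $c_{k,j}(r)=\alpha_{k,j}r^k$, so $u=\sum_k p_k$ with $p_k=r^k\sum_j\alpha_{k,j}Y_{k,j}$ an $L_a$-harmonic homogeneous polynomial of degree $k$ even in $y$. Orthogonality in $L^2(\partial B_1,|y|^a)$ is inherited from that of the $Y_{k,j}$. Parseval's identity gives $L^2$-convergence on each $\partial B_r$; to upgrade to locally uniform convergence in $B_1$, I would apply Caccioppoli and the weighted Moser/De Giorgi estimates for $L_a$-subharmonic functions from \cite{FabKenSer82} to the partial sum remainders, yielding uniform bounds on $B_\rho$ for $\rho<1$. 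Real analyticity then follows because $u$ is a locally uniform limit of polynomials with coefficients decaying faster than any geometric rate.

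The main obstacle I expect is the third step, namely the weighted Fischer-type decomposition and identification of the spectrum of $\Delta_a^{S^n}$: one must show both existence of enough $L_a$-harmonic polynomials in each degree and that no ``extra'' non-integer exponents $\lambda$ with $\lambda(\lambda+n-1+a)$ matching some $\mu_k$ enter the picture. The rest (separation of variables, spectral theorem for a weighted elliptic operator, ODE analysis, and the regularity upgrade) is relatively standard once the algebraic backbone is in place.
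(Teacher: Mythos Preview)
Your approach is correct and yields the theorem, but it takes a genuinely different route from the paper. The paper follows the potential-theoretic argument of Axler--Bourdon--Ramey \cite{AxlBouRam01} rather than spectral theory: it never introduces the weighted spherical Laplacian or the radial ODE. Instead it shows directly that the Dirichlet problem for $L_a$ in $B_1$ with boundary data an even polynomial of degree $\le m$ has a polynomial solution of the same degree, via the finite-dimensional linear map $q\mapsto |y|^{-a}L_a\bigl((1-|X|^2)q\bigr)$ on $\mathcal{P}_{m-2}^*$, whose injectivity comes from the maximum principle (this is essentially your Fischer step with $(1-|X|^2)$ in place of $|X|^2$, which makes the kernel argument immediate). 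Splitting the resulting polynomial into homogeneous parts---each automatically $L_a$-harmonic---and combining with Stone--Weierstrass gives the orthonormal basis on $\partial B_1$. The passage from $L^2(\partial B_1,|y|^a)$-convergence to locally uniform convergence is done by a single estimate $\|w\|_{L^\infty(K)}\le C_K\|w\|_{L^2(\partial B_1,|y|^a)}$ for $L_a$-harmonic $w$, obtained from subharmonicity of $w^2$ and monotonicity of spherical means; this replaces your Caccioppoli/De~Giorgi--Moser step and sidesteps the ODE entirely. Your route is more conceptual and would adapt to other weights, but is longer and has two soft spots worth tightening: the ``wrong homogeneity'' justification for injectivity of the Fischer map is vague (the clean argument is the maximum principle, as in the paper), and for $k=0$ in dimension $n+1=2$ with $-1<a<0$ the singular radial solution $r^{1-n-a}=r^{-a}$ actually has finite Dirichlet energy, so its exclusion must rest on the weak equation (it carries a Dirac mass at the origin) rather than on $W^{1,2}$-membership alone.
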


This theorem has the following immediate corollaries, which are of
independent interest and are likely known in the literature. We state
them here for reader's convenience and for possible future reference.

\begin{corollary}\label{cor:a-harm-repr} Let $u\in W^{1,2}(B_1,|y|^a)$, $-1<a<1$, be a weak
  solution of the equation $L_au=0$ in $B_1$. Then, we have a
  representation
  $$
  u(x,y)=\varphi(x,y)+y|y|^{-a}\psi(x,y),\quad (x,y)\in B_1,
  $$
  where $\varphi(x,y)$ and $\psi(x,y)$ are real analytic functions,
  even in $y$.
\end{corollary}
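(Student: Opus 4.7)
The plan is to decompose $u$ into its even and odd parts in $y$, apply Theorem~\ref{thm:even-a-harm} directly to the even part, and reduce the odd part to an even $L_{-a}$-harmonic function via the duality between $L_a$ and $L_{-a}$. Write $u = u_e + u_o$ where $u_e(x,y) = \tfrac12(u(x,y)+u(x,-y))$ and $u_o(x,y) = \tfrac12(u(x,y)-u(x,-y))$. Since the weight $|y|^a$ is even in $y$, both $u_e$ and $u_o$ are weak solutions of $L_au=0$ in $B_1$. Applying Theorem~\ref{thm:even-a-harm} to the even function $u_e$ produces the expansion $u_e = \varphi := \sum_k p_k$, with $\varphi$ real analytic in $B_1$ and even in $y$.

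For $u_o$, set $v(x,y) := |y|^a\partial_y u_o(x,y)$. Since $u_o$ is odd in $y$, $\partial_y u_o$ is even, so $v$ is even in $y$; by standard interior regularity for $L_a$-harmonic functions, $u_o$ is smooth in $B_1\setminus\{y=0\}$, so $v$ is classically defined there. On $\{y\neq 0\}$, $L_a u_o = 0$ gives $\partial_y v = -|y|^a\Delta_x u_o$, hence $|y|^{-a}\partial_y v = -\Delta_x u_o$, and a direct computation then yields $L_{-a}v = 0$ pointwise. Moreover, $\int_{B_1}|v|^2|y|^{-a} = \int_{B_1}|\partial_y u_o|^2|y|^a \leq \|u\|_{W^{1,2}(B_1,|y|^a)}^2$, so $v\in L^2(B_1,|y|^{-a})$. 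To see that $v$ is weakly $L_{-a}$-harmonic across $\{y=0\}$, integrate by parts on each half-ball $B_1^\pm$ separately: the boundary traces at $\{y=0\}$ are controlled by $|y|^{-a}\partial_y v = -\Delta_x u_o$, which tends to $0$ as $y\to 0$ since $u_o\equiv 0$ on $\{y=0\}$, so the contributions from the two half-balls cancel.

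Applying Theorem~\ref{thm:even-a-harm} to the operator $L_{-a}$ (legitimate since $-a\in(-1,1)$) and to the even function $v$ produces a locally uniformly convergent polynomial expansion $v = \sum_k q_k$, hence $v$ is real analytic and even in $y$. Writing $v(x,y) = \sum_{j\geq 0} b_j(x)\,y^{2j}$ near the origin and integrating $\partial_y u_o = y^{-a}v$ in $y$ for $y>0$ yields
\[
u_o(x,y) = \int_0^y t^{-a} v(x,t)\,dt = y^{1-a}\sum_{j\geq 0}\frac{b_j(x)}{2j+1-a}\,y^{2j} =: y^{1-a}\psi(x,y),
\]
where $\psi$ is real analytic and even in $y$. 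Since $y|y|^{-a} = y^{1-a}$ for $y>0$ and $y|y|^{-a} = -|y|^{1-a}$ for $y<0$, the identity $u_o = y|y|^{-a}\psi$ extends to $y<0$ by combining the oddness of $u_o$ with the evenness of $\psi$. Adding the even part gives $u = \varphi + y|y|^{-a}\psi$ as claimed.

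The main obstacle will be verifying that $v = |y|^a\partial_y u_o$ satisfies the hypotheses of Theorem~\ref{thm:even-a-harm} for $L_{-a}$ on $B_1$ — chiefly the weak $L_{-a}$-harmonicity across $\{y=0\}$. This hinges on enough interior and boundary regularity of $u_o$ (continuity with value zero on the thin space, smoothness on the open halves, and $W^{1,2}_{\loc}$ control of $v$ with weight $|y|^{-a}$) to justify the integration by parts and to show that the boundary traces vanish in the limit $y\to 0$.
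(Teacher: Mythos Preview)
Your approach is essentially identical to the paper's: split $u$ into even and odd parts, take $\varphi=u_e$ via Theorem~\ref{thm:even-a-harm}, pass to $v=|y|^a\partial_y u_o$, note that $v$ is even and $L_{-a}$-harmonic, apply Theorem~\ref{thm:even-a-harm} again, and recover $\psi$ by integrating $|y|^{-a}v$ in $y$. The paper writes the last step as $\psi(x,y)=y^{-1}|y|^{a}\int_0^y|s|^{-a}v(x,s)\,ds$, which is the same formula you obtained via the power-series expansion. One small caution: your justification that the boundary contributions vanish, ``$|y|^{-a}\partial_y v=-\Delta_x u_o\to 0$ since $u_o\equiv 0$ on $\{y=0\}$'', needs a word more---vanishing of $u_o$ on the thin space does not by itself force $\Delta_x u_o(x,y)\to 0$; you should invoke that tangential derivatives of $L_a$-harmonic functions are again $L_a$-harmonic (hence continuous up to $\{y=0\}$) so that $\Delta_x u_o$ inherits the zero trace from $u_o$. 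The paper simply asserts that $v$ is $L_{-a}$-harmonic without spelling this out, so your level of detail already exceeds the original.
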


\begin{corollary}\label{cor:frac-analyt} Let
  $u\in\mathcal{L}_s(\R^n)$ satisfies $(-\Delta)^s u=0$ in the unit
  ball $B_1'\subset\R^n$. Then $u$ is real analytic in $B_1'$.
\end{corollary}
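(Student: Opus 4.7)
The plan is to reduce the statement to the polynomial expansion theorem for the Caffarelli–Silvestre extension and then read off analyticity of the trace.

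First I would invoke the characterization of $s$-fractional harmonic functions recalled earlier in the paper: the hypothesis $(-\Delta)^s u = 0$ in $B_1'$ implies that the reflected Caffarelli–Silvestre extension, still denoted $u(x,y)$, belongs to $W^{1,2}_{\loc}(\widetilde{B_1'},|y|^a)$ and is $L_a$-harmonic in $\widetilde{B_1'} = \R^{n+1}_- \cup (B_1' \times \{0\}) \cup \R^{n+1}_+$. By construction of the extension via convolution with the even Poisson kernel $P(x,y)$, the function $u(x,y)$ is even in $y$.

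Next, fix an arbitrary point $x_0 \in B_1'$ and choose $r > 0$ small enough that $B_r'(x_0) \Subset B_1'$, so that the full ball $B_r(x_0) \subset \R^{n+1}$ centered at $(x_0,0)$ lies in $\widetilde{B_1'}$. Then $u \in W^{1,2}(B_r(x_0),|y|^a)$ is a weak solution of $L_a u = 0$ in $B_r(x_0)$ which is even in $y$ (after translating in $x$). Applying Theorem~\ref{thm:even-a-harm} on this ball (translated so that $(x_0,0)$ becomes the origin) yields a locally uniformly convergent expansion
\[
u(x_0 + x', y) = \sum_{k=0}^\infty p_k(x',y),
\]
where each $p_k$ is a polynomial homogeneous of degree $k$, and in particular $u$ is real analytic in $B_r(x_0)$.

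Finally, restricting this expansion to $y=0$ gives a convergent power series in $x'$ for $u(x_0 + x', 0)$, so the trace $u$ is real analytic on $B_r'(x_0)$. Since $x_0 \in B_1'$ was arbitrary, $u$ is real analytic on $B_1'$. There is no real obstacle here beyond the invocation of Theorem~\ref{thm:even-a-harm}; the only thing to verify is the geometric point that $B_r(x_0) \subset \widetilde{B_1'}$, which is exactly the condition $B_r'(x_0) \Subset B_1'$ used throughout the paper.
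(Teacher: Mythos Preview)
Your proof is correct and follows essentially the same approach as the paper: pass to the reflected Caffarelli--Silvestre extension, observe that it is $L_a$-harmonic and even in $y$, and invoke Theorem~\ref{thm:even-a-harm} to conclude real analyticity, which then restricts to the trace. Your version is in fact slightly more careful than the paper's, since you localize to balls $B_r(x_0)$ with $B_r'(x_0)\Subset B_1'$ to ensure the $W^{1,2}$ hypothesis of Theorem~\ref{thm:even-a-harm} is met (the extension is only known to lie in $W^{1,2}_{\loc}$), whereas the paper applies the theorem directly on $B_1$.
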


\begin{corollary}\label{cor:uniq-restr} Let $u\in W^{1,2}(B_1,|y|^a)$, $-1<a<1$, be a weak
  solution of the equation $L_au=0$ in $B_1$, even in $y$. If
  $u(\cdot,0)\equiv 0$ in $B_1'$, then $u\equiv 0$ in $B_1$.
\end{corollary}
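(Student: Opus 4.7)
The plan is to apply Theorem~\ref{thm:even-a-harm} and reduce the statement to a short algebraic fact about $L_a$-harmonic polynomials. By that theorem I may write
\[
u(x,y)=\sum_{k=0}^{\infty} p_k(x,y)
\]
locally uniformly in $B_1$, where each $p_k$ is an $L_a$-harmonic polynomial, homogeneous of degree $k$, and even in $y$. Evaluating at $y=0$ in a neighborhood of the origin in $B_1'$ and using the hypothesis gives $\sum_{k\ge 0} p_k(x,0)\equiv 0$. Since $p_k(\cdot,0)$ is a homogeneous polynomial of degree $k$ in $x$, comparing terms of equal homogeneity forces $p_k(\cdot,0)\equiv 0$ for every $k$.

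It therefore suffices to prove the following claim: if $p$ is an $L_a$-harmonic polynomial, homogeneous of degree $k$, even in $y$, with $p(\cdot,0)\equiv 0$, then $p\equiv 0$. To do this I would expand
\[
p(x,y)=\sum_{j=0}^{\lfloor k/2\rfloor} q_{k-2j}(x)\,y^{2j},
\]
substitute into $L_a p=0$ (which for $y>0$ reads $\Delta p+(a/y)p_y=0$), and match the coefficients of each even power of $y$. This produces the recursion
\[
\Delta_x q_{k-2j}+2(j+1)(2j+1+a)\,q_{k-2j-2}=0,\qquad j=0,1,2,\ldots,
\]
which expresses $q_{k-2j-2}$ as a scalar multiple of $\Delta_x q_{k-2j}$ provided the factor $2(j+1)(2j+1+a)$ is nonzero. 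Because $-1<a<1$ gives $2j+1+a>0$ for all $j\ge 0$, the recursion is invertible. Starting from $q_k=p(\cdot,0)\equiv 0$, it then forces $q_{k-2}\equiv 0$, $q_{k-4}\equiv 0$, and so on, so $p\equiv 0$.

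Applying this to each $p_k$ in the expansion yields $u\equiv 0$ in $B_1$. The only step requiring any care is the nonvanishing of the recursion coefficient $2(j+1)(2j+1+a)$; this is exactly where the hypothesis $a>-1$ enters, and it corresponds precisely to the range of $a$ for which the polynomial expansion of Theorem~\ref{thm:even-a-harm} is available.
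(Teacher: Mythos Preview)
Your proof is correct and follows essentially the same route as the paper: both arguments invoke the polynomial expansion of Theorem~\ref{thm:even-a-harm}, deduce that each homogeneous piece $p_k$ has vanishing trace on $\{y=0\}$, and then write $p_k$ as $\sum_j q_{k-2j}(x)y^{2j}$ to show it vanishes identically. The only difference is in the induction step for this last claim: you read off the recursion $\Delta_x q_{k-2j}+2(j+1)(2j+1+a)q_{k-2j-2}=0$ directly from $L_ap_k=0$ and use the nonvanishing of $2j+1+a$ (which is where $a>-1$ enters), whereas the paper instead applies $\partial_x^\alpha$ with $|\alpha|=k-2j$ to reduce to checking that $c\,y^{2j}$ cannot be $L_a$-harmonic unless $c=0$. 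These are minor variants of the same mechanism; your recursion is arguably a bit more transparent, while the paper's version avoids writing out the coefficient explicitly.
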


The proof of Theorem~\ref{thm:even-a-harm} and subsequently those of
Corollaries~\ref{cor:a-harm-repr}, \ref{cor:frac-analyt}, and
\ref{cor:uniq-restr} are based on the following lemmas. We follow the
approach of \cite{AxlBouRam01} for harmonic functions.

\medskip Let $\mathcal{P}_m^*=\{p: \text{$p(x,y)$ polynomial of degree
  $\leq m$, even in $y$}\}$.

\begin{lemma} Let $p\in \mathcal{P}_m^*$. Then there exists
  $\tilde{p}\in \mathcal{P}_m^*$ such that
  $$
  L_a \tilde{p}=0\quad\text{in }B_1,\quad \tilde{p}=p\quad\text{on
  }\partial B_1.
  $$
  In other words, the solution of the Dirichlet problem for $L_a$ in
  $B_1$ with boundary values in $\mathcal{P}_m^*$ on $\partial B_1$ is
  itself in $\mathcal{P}_m^*$.

\end{lemma}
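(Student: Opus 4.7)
The plan is to look for $\tilde p$ of the form $\tilde p = p - (1-|X|^2)q$ for a suitable $q \in \mathcal{P}_{m-2}^*$. Such $\tilde p$ automatically lies in $\mathcal{P}_m^*$ and equals $p$ on $\partial B_1$, so the only remaining task is to choose $q$ so that $L_a\tilde p = 0$.

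To that end, introduce the differential operator $\mu(u) := \Delta u + (a/y)u_y$. If $u\in\mathcal{P}_m^*$, write $u(x,y) = \sum_{k\ge 0} y^{2k}q_k(x)$ with $\deg q_k \le m-2k$; then $u_y/y = \sum_{k\ge 1}2k\,y^{2k-2}q_k(x)$ is again a polynomial, even in $y$. A direct computation shows $\mu u = \sum_k y^{2k}\Delta_x q_k + \sum_{k\ge 1} 2k(2k-1+a)\,y^{2k-2}q_k \in \mathcal{P}_{m-2}^*$, so $\mu$ restricts to a linear map $\mathcal{P}_m^*\to\mathcal{P}_{m-2}^*$. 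Moreover, for such a polynomial $u$ one has $L_a u = |y|^a\mu u$ pointwise on $\{y\neq 0\}$, while the evenness of $u$ in $y$ forces $u_y(x,0)=0$, so no singular contribution to the weak equation arises across $\{y=0\}$; hence $L_a u = 0$ weakly in $B_1$ if and only if $\mu u \equiv 0$.

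Now set $\nu:\mathcal{P}_{m-2}^*\to\mathcal{P}_m^*$ by $\nu(q) := (1-|X|^2)q$, and consider the composition $\mu\circ\nu:\mathcal{P}_{m-2}^*\to\mathcal{P}_{m-2}^*$. This is a linear endomorphism of a finite-dimensional space, so bijectivity follows from injectivity. Suppose $\mu\nu(q) = 0$. Then $w := (1-|X|^2)q \in \mathcal{P}_m^* \subset W^{1,2}(B_1,|y|^a)$ satisfies $L_a w = 0$ in $B_1$ and $w=0$ on $\partial B_1$. By well-posedness of the Dirichlet problem for $L_a$ in the weighted space $W^{1,2}(B_1,|y|^a)$ (which holds since $|y|^a$ is an $A_2$-weight, cf.\ \cite{FabKenSer82}), $w\equiv 0$ in $B_1$, and therefore $q\equiv 0$.

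Finally, given $p\in\mathcal{P}_m^*$, surjectivity of $\mu\circ\nu$ yields $q\in\mathcal{P}_{m-2}^*$ with $\mu\nu(q) = \mu p$. Then $\tilde p := p - (1-|X|^2)q$ is in $\mathcal{P}_m^*$, satisfies $\mu\tilde p = 0$ (hence $L_a\tilde p=0$ in $B_1$), and agrees with $p$ on $\partial B_1$. The only delicate point is the bookkeeping in the first step — verifying that $\mu$ lands in $\mathcal{P}_{m-2}^*$ and that $L_a u = 0$ weakly is equivalent to $\mu u \equiv 0$ for even polynomials; everything else is elementary linear algebra plus uniqueness for the weighted Dirichlet problem.
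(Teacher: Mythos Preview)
Your proof is correct and follows essentially the same approach as the paper: your operator $\mu\circ\nu$ is exactly the paper's map $T(q)=|y|^{-a}L_a((1-|X|^2)q)$ on $\mathcal{P}_{m-2}^*$, and both arguments reduce to showing this finite-dimensional endomorphism is injective via uniqueness for the Dirichlet problem (you cite well-posedness from \cite{FabKenSer82}, the paper invokes the maximum principle). Your write-up is slightly more explicit in verifying that $\mu$ lands in $\mathcal{P}_{m-2}^*$ and that $L_a u=0$ weakly is equivalent to $\mu u\equiv 0$ for even polynomials, which the paper leaves as ``straightforward to verify.''
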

\begin{proof} For $m=0,1$, we simply have $\tilde{p}=p$. For $m\geq2$,
  we proceed as follows.

  For $q\in \mathcal{P}_{m-2}^*$ define $Tq\in \mathcal{P}_{m-2}^*$ by
$$
(Tq)(x,y)=|y|^{-a}L_a((1-x^2-y^2)q(x,y)).
$$
(It is straightforward to verify that $Tq$ is indeed in
$\mathcal{P}_{m-2}^*$). We now claim that the mapping
$T:\mathcal{P}_{m-2}^*\to \mathcal{P}_{m-2}^*$ is bijective. Since $T$
is clearly linear and $\mathcal{P}_{m-2}^*$ is finite dimensional it
is equivalent to showing that $T$ is injective. To this end, suppose
that $Tq=0$ for some $q\in \mathcal{P}_{m-2}^*$. This means that
$Q(x,y)=(1-x^2-y^2)q(x,y)$ is $L_a$-harmonic in $B_1$:
$$
L_a Q=0\quad\text{in }B_1.
$$
On the other hand $Q=0$ on $\partial B_1$ and therefore, by the
maximum principle $Q=0$ in $B_1$. But this implies that $q=0$ in
$B_1$, or that $q\equiv 0$. Hence, the mapping $T$ is injective, and
consequently bijective. It is now easy to see that
$$
\tilde{p}=p-(1-x^2-y^2)T^{-1}(|y|^{-a}L_a(p))\in \mathcal{P}_m^*
$$
satisfies the required properties.
\end{proof}
\begin{lemma} Polynomials, even in $y$, are dense in the subspace of
  functions in $L^2(\partial B_1,|y|^a)$, even in $y$.
\end{lemma}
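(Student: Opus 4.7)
My plan is to reduce the claim to Stone-Weierstrass plus the standard density of continuous functions in a weighted $L^2$ space.

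First, since $a\in(-1,1)$ and $\partial B_1$ is compact with $|y|^a$ locally integrable on it (the singularity along the equator $\{y=0\}\cap\partial B_1$ is integrable because $a>-1$), the measure $d\mu:=|y|^a\,dS$ is a finite Radon measure on $\partial B_1$. Hence $C(\partial B_1)$ is dense in $L^2(\partial B_1,|y|^a)$, and consequently the subspace $C_{\mathrm{even}}(\partial B_1)$ of continuous functions even in $y$ is dense in the subspace $L^2_{\mathrm{even}}(\partial B_1,|y|^a)$ (just approximate $f$ by continuous $g_n$ and pass to the even part $\tfrac12(g_n(x,y)+g_n(x,-y))$, which still converges to $f$ since $f$ is already even). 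Thus it suffices to show that polynomials even in $y$ are uniformly dense in $C_{\mathrm{even}}(\partial B_1)$.

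Next, identify $C_{\mathrm{even}}(\partial B_1)$ with $C(X)$, where $X$ is the compact Hausdorff quotient of $\partial B_1$ by the involution $(x,y)\mapsto(x,-y)$ (equivalently, $X$ may be realized as the closed upper hemisphere $\overline{\partial B_1^+}$). Consider the algebra
\[
\mathcal{A}=\mathbb{R}[x_1,\ldots,x_n,y^2]\big|_{\partial B_1},
\]
that is, polynomials in the variables $x_1,\ldots,x_n,y^2$ restricted to $\partial B_1$. Every element of $\mathcal{A}$ is a polynomial even in $y$, and conversely every polynomial even in $y$ is of this form. Clearly $\mathcal{A}$ is a subalgebra of $C(X)$ containing the constants; moreover it separates points of $X$, since if two points $[(x,y)]$ and $[(x',y')]$ in $X$ satisfy $x=x'$ and $y^2=(y')^2$, then they are identified in $X$. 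By the Stone-Weierstrass theorem, $\mathcal{A}$ is uniformly dense in $C(X)=C_{\mathrm{even}}(\partial B_1)$.

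Combining the two steps, polynomials even in $y$ are uniformly dense in $C_{\mathrm{even}}(\partial B_1)$, which is itself dense in $L^2_{\mathrm{even}}(\partial B_1,|y|^a)$, and the claim follows. The only mildly subtle point is verifying that the weighted measure is finite on $\partial B_1$ (so that $C\hookrightarrow L^2(\partial B_1,|y|^a)$ densely), but this is immediate from $a>-1$ since near the equator the surface measure times $|y|^a$ integrates as $\int_0^1 t^a\,dt<\infty$ in the transverse direction. No obstacle of substance arises.
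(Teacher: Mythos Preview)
Your proof is correct and follows essentially the same two-step strategy as the paper: first show that even polynomials are uniformly dense in $C_{\mathrm{even}}(\partial B_1)$, then pass to $L^2(\partial B_1,|y|^a)$ using that the weighted measure is finite. The paper simply asserts the first step without detail, whereas you spell out the Stone--Weierstrass argument via the quotient $\partial B_1/\{(x,y)\sim(x,-y)\}$ and the algebra $\mathbb{R}[x_1,\ldots,x_n,y^2]$; you also make explicit the even-part averaging for the $L^2$ density step.
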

\begin{proof} Polynomials, even in $y$ are dense in the space of
  continuous functions in $C(\partial B_1)$, even in $y$, with the
  uniform norm. The claim now follows from the observation that the
  embedding $C(\partial B_1)\hookrightarrow L^2(\partial B_1, |y|^a)$
  is continuous:
  \[
    \|v\|_{L^2(\partial B_1,|y|^a)}\leq \|v\|_{L^\infty(\partial
      B_1)}\left(\int_{\partial B_1}|y|^a \right)^{1/2}\leq C
    \|v\|_{L^\infty(\partial B_1)}.  \qedhere
  \]
\end{proof}

\begin{lemma}\label{lem:ortho-bas} The subspace of functions in $L^2(\partial B_1,|y|^a)$,
  even in $y$, has an orthonormal basis $\{p_{k}\}_{k=0}^\infty$
  consisting of homogeneous $L_a$-harmonic polynomials $p_k$, even in
  $y$.
\end{lemma}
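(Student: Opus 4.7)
The plan is to build the basis out of the finite-dimensional spaces $\mathcal{H}_k^*$ of $L_a$-harmonic polynomials homogeneous of degree $k$ and even in $y$, showing that distinct $\mathcal{H}_k^*$ are mutually orthogonal in $L^2(\partial B_1,|y|^a)$ and that their union spans the even subspace.

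For orthogonality, I would apply Green's identity for the weighted operator $L_a$ on $B_1$: given $p_k\in\mathcal{H}_k^*$ and $p_m\in\mathcal{H}_m^*$, Euler's homogeneity relation $\partial_\nu p_k=k p_k$ on $\partial B_1$ yields
\[
0=\int_{B_1}(p_m L_a p_k-p_k L_a p_m)=\int_{\partial B_1}|y|^a(p_m\partial_\nu p_k-p_k\partial_\nu p_m)=(k-m)\int_{\partial B_1}p_k p_m|y|^a,
\]
so the integral vanishes when $k\neq m$. The integration by parts is legitimate despite the singular weight because evenness in $y$ forces $\partial_y p_k$ to vanish on $\{y=0\}$, making $|y|^a\nabla p_k$ continuous across the hyperplane.

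For the spanning property, I would take any even polynomial $p\in\mathcal{P}_m^*$ and invoke the first preliminary lemma to obtain $\tilde{p}\in\mathcal{P}_m^*$ with $L_a\tilde{p}=0$ in $B_1$ and $\tilde{p}=p$ on $\partial B_1$. Decomposing $\tilde{p}=\sum_{k=0}^m\tilde{p}_k$ into $y$-even homogeneous components, each $\tilde{p}_k$ is itself $L_a$-harmonic: on polynomials even in $y$ the operator can be written $L_a u=|y|^a(\Delta u+\tfrac{a}{y}u_y)$, where $u_y/y$ makes sense as a polynomial since $u_y$ is odd in $y$; this form plainly preserves homogeneity, so $L_a\tilde{p}=0$ forces each homogeneous piece to satisfy $L_a\tilde{p}_k=0$, i.e., $\tilde{p}_k\in\mathcal{H}_k^*$. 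Together with the preceding density of even polynomials in the even subspace of $L^2(\partial B_1,|y|^a)$, this shows $\operatorname{span}\bigcup_k\mathcal{H}_k^*$ is dense there.

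Finally, each $\mathcal{H}_k^*$ is finite-dimensional as a subspace of homogeneous degree-$k$ polynomials, so Gram--Schmidt provides an orthonormal basis of $\mathcal{H}_k^*$; concatenating over $k$ yields the claimed basis $\{p_k\}_{k=0}^\infty$. The main obstacle is the homogeneous decomposition step: the singular coefficient of $L_a$ at $y=0$ has to be handled carefully, and the evenness hypothesis is essential precisely because it makes $u_y/y$ a polynomial and allows a clean separation by degree of homogeneity.
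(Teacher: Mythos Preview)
Your proposal is correct and follows essentially the same route as the paper: orthogonality of distinct homogeneous $L_a$-harmonic pieces via Green's identity and Euler's relation, spanning via the Dirichlet-solving lemma plus the observation that $|y|^{-a}L_a$ maps even homogeneous polynomials to even homogeneous polynomials of degree two lower (so $L_a\tilde p=0$ forces each homogeneous component to be $L_a$-harmonic), density of even polynomials, and Gram--Schmidt. Your added remarks on why the integration by parts is legitimate and why evenness makes $u_y/y$ a polynomial are welcome clarifications that the paper leaves implicit.
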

\begin{proof} If $p$ is a polynomial, even in $y$, then restricted to
  $\partial B_1$ it can be replaced with an $L_a$-harmonic polynomial
  $\tilde p$. On the other hand, if we decompose
  $$
  \tilde{p}=\sum_{i=0}^m q_i
  $$
  where $q_i$ is a homogeneous polynomial of order $i$, even in $y$,
  then
$$
|y|^{-a}L_a \tilde{p}= \sum_{i=2}^m |y|^{-a}L_aq_i
$$
where $|y|^{-a}L_aq_i$ is a homogeneous polynomial of order $i-2$,
$i=2,\ldots,m$. Hence, $L_a \tilde{p}=0$ iff $L_a q_i=0$, for all
$i=0,\ldots, m$ (for $i=0,1$ this holds automatically).

We further note that if $q_i$ and $q_j$ are two homogeneous
$L_a$-harmonic polynomials of degrees $i\neq j$, then they are
orthogonal in $L^2(\partial B_1,|y|^a)$. Indeed,
\begin{align*}
  0=\int_{B_1}q_i\div(|y|^a\nabla q_j)-\div(|y|^a\nabla q_i)
  q_j&=\int_{\partial B_1} (q_i\partial_{\nu}q_j -q_j\partial_{\nu}
       q_i)|y|^a\\
     &=(j-i)\int_{\partial B_1}q_iq_j|y|^a.
\end{align*}
Using this and following the standard orthogonalization process, we
can construct a basis consisting of homogeneous $L_a$-harmonic
polynomials.
\end{proof}

\begin{lemma}\label{lem:Linf-L2bdry} Let $u\in W^{1,2}(B_1, |y|^a)\cap C(\overline{B_1})$ is a weak
  solution of $L_a u=0$ in $B_1$. Then
  $$
  \|u\|_{L^\infty(K)}\leq C_{n,a,K}\|u\|_{L^2(\partial B_1,|y|^a)}.
$$
for any $K\Subset B_1$.
\end{lemma}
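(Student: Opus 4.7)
My plan combines uniform polynomial approximation on $\partial B_1$ with a quantitative pointwise bound for $L_a$-harmonic polynomials derived from the orthogonal decomposition in Lemma~\ref{lem:ortho-bas}. It suffices to treat the case when $u$ is even in $y$, which is the case actually used in the proof of Theorem~\ref{thm:even-a-harm}; the general case follows by splitting $u$ into even and odd parts, with a dual argument for the odd part based on $L_{-a}$.

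First I would approximate $u$ uniformly by $L_a$-harmonic polynomials. By the Weierstrass theorem, polynomials in $\mathcal{P}_m^*$ are dense in the even subspace of $C(\partial B_1)$ in the uniform norm, so one can choose $q_N \in \mathcal{P}_{m_N}^*$ with $\|u|_{\partial B_1} - q_N\|_{L^\infty(\partial B_1)} \to 0$. By the preceding Dirichlet-solving lemma, each $q_N$ extends to $\tilde q_N \in \mathcal{P}_{m_N}^*$ which is $L_a$-harmonic in $B_1$ and agrees with $q_N$ on $\partial B_1$. The maximum principle for continuous $L_a$-harmonic functions (a consequence of the $|y|^a$-weighted sub-mean value property applied to the $L_a$-subharmonic function $(u - \tilde q_N)^2$, cf.~\cite{CafSalSil08}*{Lemma~2.9}) then yields
\[
\|u - \tilde q_N\|_{L^\infty(\overline{B_1})} \le \|u|_{\partial B_1} - q_N\|_{L^\infty(\partial B_1)} \to 0.
\]

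Next I would decompose $\tilde q_N = \sum_{k=0}^{m_N} p_k^{(N)}$ into homogeneous $L_a$-harmonic pieces of degree $k$ even in $y$; the integration-by-parts calculation in the proof of Lemma~\ref{lem:ortho-bas} shows these pieces are pairwise orthogonal in $L^2(\partial B_1, |y|^a)$. Homogeneity gives $|p_k^{(N)}(X)| \le \rho^k \|p_k^{(N)}\|_{L^\infty(\partial B_1)}$ for $|X| \le \rho < 1$, while norm equivalence on the finite-dimensional space of degree-$k$ $L_a$-harmonic polynomials even in $y$ provides constants $C_k$, polynomially bounded in $k$, such that $\|p\|_{L^\infty(\partial B_1)} \le C_k \|p\|_{L^2(\partial B_1, |y|^a)}$. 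Cauchy-Schwarz then produces
\[
|\tilde q_N(X)| \le \Bigl(\sum_{k \ge 0}\rho^{2k} C_k^2\Bigr)^{1/2}\|\tilde q_N\|_{L^2(\partial B_1, |y|^a)} =: C(\rho)\,\|\tilde q_N\|_{L^2(\partial B_1, |y|^a)},
\]
with $C(\rho) < \infty$ because $\rho < 1$. Sending $N \to \infty$ and using uniform convergence $\tilde q_N \to u$ on $\overline{B_1}$ (which in particular gives $L^2(\partial B_1, |y|^a)$-convergence) yields $|u(X)| \le C(\rho) \|u\|_{L^2(\partial B_1, |y|^a)}$ for $|X| \le \rho$; choosing $\rho$ slightly larger than $\max_{X \in K}|X|$ gives the claim.

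The main obstacle is verifying the polynomial growth of the constants $C_k$. This reduces to bounding $\sup_{X \in \partial B_1} K_k(X,X)$, where $K_k(X,Y) = \sum_i \phi_i(X)\phi_i(Y)$ is the reproducing kernel of the degree-$k$ subspace under the $|y|^a$-weighted $L^2$ inner product; the integral $\int_{\partial B_1} K_k(X,X)|y|^a\, dS$ equals the dimension of this subspace, which is $O(k^{n-1})$ by rank-nullity applied to the surjection $|y|^{-a}L_a : \mathcal{P}_k^* \to \mathcal{P}_{k-2}^*$, and the pointwise upper bound is obtained from standard Markov-Bernstein-type inequalities adapted to the weighted inner product.
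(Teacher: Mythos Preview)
Your approach is genuinely different from the paper's, and considerably more elaborate. The paper's proof is two lines: first cite the interior estimate $\|u\|_{L^\infty(K)}\le C_{n,a,K}\|u\|_{L^2(B_1,|y|^a)}$ from \cite{FraSer87}, then observe that $L_a(u^2)\ge 0$ so the weighted spherical averages $r\mapsto r^{-(n+a)}\int_{\partial B_r}u^2|y|^a$ are nondecreasing, and integrate in $r$ to get $\|u\|_{L^2(B_1,|y|^a)}\le C_{n,a}\|u\|_{L^2(\partial B_1,|y|^a)}$. No polynomial machinery is needed; in fact the lemma is being proved precisely so that the polynomial expansion of Theorem~\ref{thm:even-a-harm} can be established afterward.

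Your route via uniform polynomial approximation, homogeneous decomposition, and Cauchy--Schwarz is coherent in outline, but the step you flag as the ``main obstacle'' is a real gap, not a formality. You need $\sup_{X\in\partial B_1}K_k(X,X)$ to grow at most polynomially in $k$. The observation that $\int_{\partial B_1}K_k(X,X)|y|^a\,dS$ equals the dimension only controls an average, not the supremum; because the weight $|y|^a$ breaks the full $O(n+1)$ symmetry, $K_k(X,X)$ is not constant on $\partial B_1$ and can concentrate near $\{y=0\}$ (for $a<0$) or near the poles (for $a>0$). Passing from the average to the sup is exactly a weighted Nikolskii-type inequality, and ``Markov--Bernstein-type inequalities adapted to the weighted inner product'' is not a proof---establishing such an inequality for the $|y|^a$ weight on the sphere, with the correct polynomial dependence on $k$, would itself require substantial work (comparable to Christoffel function asymptotics for Jacobi-type weights). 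Until that is supplied, the argument is incomplete. Given that the paper's proof avoids all of this with a one-line subharmonicity argument, you should replace your strategy with that one.
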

\begin{proof} First, we note that by \cite{FraSer87}
  $$
  \|u\|_{L^\infty(K)}\leq C_{n,a,K}\|u\|_{L^2(B_1,|y|^a)}.
  $$
  So we just need to show that
  $$
  \|u\|_{L^2(B_1,|y|^a)}\leq C_{n,a}\|u\|_{L^2(\partial B_1,|y|^a)}.
$$
This follows from the fact that $u^2$ is a subsolution:
$L_a (u^2)\geq 0$ in $B_1$ and therefore the weighted spherical
averages
$$
r\mapsto\frac{1}{\omega_{n,a}r^{n+a}}\int_{\partial B_r} u^2
|y|^a,\quad 0<r<1
$$
are increasing. Integrating, we easily obtain that
\[
  \|u\|_{L^2(B_1,|y|^a)}\leq C_{n,a}\|u\|_{L^2(\partial B_1,|y|^a)}.
  \qedhere
\]
\end{proof}

We are now ready to prove Theorem~\ref{thm:even-a-harm}.

\begin{proof}[Proof of Theorem~\ref{thm:even-a-harm}] Without loss of
  generality we may assume
  $u\in W^{1,2}(B_1, |y|^a)\cap C(\overline{B_1})$, otherwise we can
  consider a slightly smaller ball. Now, using the orthonormal basis
  $\{p_k\}_{k=0}^\infty$ from Lemma~\ref{lem:ortho-bas} we represent
  $$
  u=\sum_{k=0}^\infty a_k p_k\quad\text{in }L^2(\partial B_1,|y|^a).
$$
We then claim that
$$
u(x,y)=\sum_{k=0}^\infty a_k p_k(x,y)\quad\text{uniformly on any
}K\Subset B_1.
$$
Indeed, if $u_m(x,y)=\sum_{k=0}^m a_k p_k(x,y)$, then
$\|u-u_m\|_{L^2(\partial B_1,|y|^2)}\to 0$ as $m\to \infty$ and
therefore by Lemma~\ref{lem:Linf-L2bdry}
\[
  \|u-u_m\|_{L^\infty(K)}\leq C_{n,a,K}\|u-u_m\|_{L^2(\partial
    B_1,|y|^a)}\to 0.\qedhere
\]
\end{proof}

We now give the proofs of the corollaries.

\begin{proof}[Proof of Corollary~\ref{cor:a-harm-repr}]
  Write $u(x,y)$ in the form
$$
u(x,y)=u_{\mathrm{even}}(x,y)+u_{\mathrm{odd}}(x,y),
$$
where $u_{\mathrm{even}}$ and $u_{\mathrm{odd}}$ are even and odd in
$y$, respectively. Clearly, both functions are
$L_a$-harmonic. Moreover, by Theorem~\ref{thm:even-a-harm},
$u_{\mathrm{even}}$ is real analytic and we take
$\varphi=u_{\mathrm{even}}$. On the other hand, consider
$$
v(x,y)=|y|^a\partial_y u_{\mathrm{odd}}(x,y).
$$
Then, $v$ is $L_{-a}$-harmonic in $B_1$ and again by
Theorem~\ref{thm:even-a-harm}, $v$ is real analytic. We can now
represent
$$
u_{\mathrm{odd}}(x,y)=y|y|^{-a}\psi(x,y),\quad
\psi(x,y)=y^{-1}|y|^{a}\int_0^y|s|^{-a} v(x,s)ds.
$$
It is not hard to see that $\psi(x,y)$ is real analytic, which
completes our proof.
\end{proof}
\begin{proof}[Proof of Corollary~\ref{cor:frac-analyt}] The proof
  follows immediately from Theorem~\ref{thm:even-a-harm} by
  considering the Caffarelli-Silvestre extension
  $$
  u(x,y)=u*P(\cdot,y)=\int_{\R^n} P(x-z,y)u(z)dz,\quad (x,y)\in
  \R^n\times\R_+
$$
where $P(x,y)=C_{n,a}\frac{y^{1-a}}{(|x|^2+y^2)^{(n+1-a)/2}}$ is the
Poisson kernel for $L_a$, and noting that its extension to $\R^{n+1}$
by even symmetry in $y$ (still denoted $u$) satisfies $L_au=0$ in
$B_1$.
\end{proof}
\begin{proof}[Proof of Corollary~\ref{cor:uniq-restr}]
  Represent $u(x,y)$ as a locally uniformly convergent in $B_1$ series
  $$
  u(x,y)=\sum_{k=0}^\infty q_k(x,y),
$$
where $q_k(x,y)$ is a homogeneous of degree $k$ $L_a$-harmonic
polynomial, even in $y$.  We have
$$
u(x,0)=\sum_{k=0}^\infty q_k(x,0)\equiv 0
$$
from which we conclude that $q_k(x,0)\equiv 0$. We now want to show
that $q_k\equiv 0$. To this end represent
$$
q_k(x)=\sum_{j=0}^{[k/2]} p_{k-2j}(x)y^{2j},
$$
where $p_{k-2j}(x)$ is a homogeneous polynomial of order $k-2j$ in
$x$. Clearly $p_{k}(x)\equiv0$. Taking partial derivatives
$\partial^{\alpha}_x q_k(x)$ of order $|\alpha|=k-2$, we see that
$$
\partial^{\alpha}_x q_k(x)=c_\alpha y^2,\quad
c_\alpha=\partial^{\alpha}_xp_{k-2}
$$
is $L_a$-harmonic, which can happen only when $c_\alpha=0$. Hence
$D^{k-2}_x p_{k-2}(x)\equiv 0$ and therefore $p_{k-2}\equiv 0$. Then
taking consequently derivatives of orders $k-2j$, $j=2,\ldots$, we
conclude that $p_{k-2j}(x)\equiv 0$ for all $j=0,\ldots,[k/2]$ and
hence $q_k(x,y)\equiv 0$.
\end{proof}


\begin{bibdiv}
\begin{biblist}

\bib{Anz83}{article}{
   author={Anzellotti, Gabriele},
   title={On the $C^{1,\alpha }$-regularity of $\omega $-minima of
   quadratic functionals},
   language={English, with Italian summary},
   journal={Boll. Un. Mat. Ital. C (6)},
   volume={2},
   date={1983},
   number={1},
   pages={195--212},
   review={\MR{718371}},
}

\bib{AxlBouRam01}{book}{
   author={Axler, Sheldon},
   author={Bourdon, Paul},
   author={Ramey, Wade},
   title={Harmonic function theory},
   series={Graduate Texts in Mathematics},
   volume={137},
   edition={2},
   publisher={Springer-Verlag, New York},
   date={2001},
   pages={xii+259},
   isbn={0-387-95218-7},
   review={\MR{1805196}},
   doi={10.1007/978-1-4757-8137-3},
}
  
\bib{CafSil07}{article}{
   author={Caffarelli, Luis},
   author={Silvestre, Luis},
   title={An extension problem related to the fractional Laplacian},
   journal={Comm. Partial Differential Equations},
   volume={32},
   date={2007},
   number={7-9},
   pages={1245--1260},
   issn={0360-5302},
   review={\MR{2354493}},
   doi={10.1080/03605300600987306},
}

\bib{CafSalSil08}{article}{
   author={Caffarelli, Luis A.},
   author={Salsa, Sandro},
   author={Silvestre, Luis},
   title={Regularity estimates for the solution and the free boundary of the
   obstacle problem for the fractional Laplacian},
   journal={Invent. Math.},
   volume={171},
   date={2008},
   number={2},
   pages={425--461},
   issn={0020-9910},
   review={\MR{2367025}},
   doi={10.1007/s00222-007-0086-6},
 }

\bib{FabKenSer82}{article}{
   author={Fabes, Eugene B.},
   author={Kenig, Carlos E.},
   author={Serapioni, Raul P.},
   title={The local regularity of solutions of degenerate elliptic
   equations},
   journal={Comm. Partial Differential Equations},
   volume={7},
   date={1982},
   number={1},
   pages={77--116},
   issn={0360-5302},
   review={\MR{643158}},
   doi={10.1080/03605308208820218},
}

\bib{FraSer87}{article}{
   author={Franchi, B.},
   author={Serapioni, R.},
   title={Pointwise estimates for a class of strongly degenerate elliptic
   operators: a geometrical approach},
   journal={Ann. Scuola Norm. Sup. Pisa Cl. Sci. (4)},
   volume={14},
   date={1987},
   number={4},
   pages={527--568 (1988)},
   issn={0391-173X},
   review={\MR{963489}},
}

\bib{Gar19}{article}{
   author={Garofalo, Nicola},
   title={Fractional thoughts},
   conference={
      title={New developments in the analysis of nonlocal operators},
   },
   book={
      series={Contemp. Math.},
      volume={723},
      publisher={Amer. Math. Soc., Providence, RI},
   },
   date={2019},
   pages={1--135},
   review={\MR{3916700}},
   doi={10.1090/conm/723/14569},
}

\bib{GarPetPopSVG17}{article}{
   author={Garofalo, Nicola},
   author={Petrosyan, Arshak},
   author={Pop, Camelia A.},
   author={{Smit Vega Garcia}, Mariana},
   title={Regularity of the free boundary for the obstacle problem for the
   fractional Laplacian with drift},
   journal={Ann. Inst. H. Poincar\'{e} Anal. Non Lin\'{e}aire},
   volume={34},
   date={2017},
   number={3},
   pages={533--570},
   issn={0294-1449},
   review={\MR{3633735}},
   doi={10.1016/j.anihpc.2016.03.001},
}

\bib{HanLin97}{book}{
   author={Han, Qing},
   author={Lin, Fanghua},
   title={Elliptic partial differential equations},
   series={Courant Lecture Notes in Mathematics},
   volume={1},
   publisher={New York University, Courant Institute of Mathematical
   Sciences, New York; American Mathematical Society, Providence, RI},
   date={1997},
   pages={x+144},
   isbn={0-9658703-0-8},
   isbn={0-8218-2691-3},
   review={\MR{1669352}},
 }

\bib{JeoPet19a}{article}{
   author={Jeon, Seongmin},
   author={Petrosyan, Arshak},
   title={Almost minimizers for the thin obstacle problem},
   pages={59 pp.},
   date={2019},
   status={preprint},
 }

\bib{PetPop15}{article}{
   author={Petrosyan, Arshak},
   author={Pop, Camelia A.},
   title={Optimal regularity of solutions to the obstacle problem for the
   fractional Laplacian with drift},
   journal={J. Funct. Anal.},
   volume={268},
   date={2015},
   number={2},
   pages={417--472},
   issn={0022-1236},
   review={\MR{3283160}},
   doi={10.1016/j.jfa.2014.10.009},
}
 
\bib{PetShaUra12}{book}{
   author={Petrosyan, Arshak},
   author={Shahgholian, Henrik},
   author={Uraltseva, Nina},
   title={Regularity of free boundaries in obstacle-type problems},
   series={Graduate Studies in Mathematics},
   volume={136},
   publisher={American Mathematical Society, Providence, RI},
   date={2012},
   pages={x+221},
   isbn={978-0-8218-8794-3},
   review={\MR{2962060}},
   doi={10.1090/gsm/136},
 }

\bib{Sil07}{article}{
   author={Silvestre, Luis},
   title={Regularity of the obstacle problem for a fractional power of the
   Laplace operator},
   journal={Comm. Pure Appl. Math.},
   volume={60},
   date={2007},
   number={1},
   pages={67--112},
   issn={0010-3640},
   review={\MR{2270163}},
   doi={10.1002/cpa.20153},
 }
 
\end{biblist}
\end{bibdiv}
\end{document}